\newcolumntype{C}[1]{>{\centering\arraybackslash }b{#1}}
\theoremstyle{plain}
\numberwithin{equation}{section}
\newtheorem{theorem}{Theorem}[section]
\newtheorem{lemma}{Lemma}[section]
\newtheorem{corollary}{Corollary}[section]
\theoremstyle{remark}
\newtheorem{remark}{Remark}[section]
\DeclareMathOperator{\R }{\mathbf R}
\newcommand{\psnma}{p_{\mathsf S}(\sigma)}
\newcommand{\psnmz}{p_{\mathsf S}(0)}
\newcommand{\ps}{p_{\mathsf S}}
\definecolor{brown}{rgb}{0.5,0,0}
\definecolor{backgroundcolor}{rgb}{0.98, 0.92, 0.73}
\def\cfac#1{\ifmmode\setbox7\hbox{$\accent"5E#1$}\else\setbox7\hbox{\accent"5E#1}\penalty 10000\relax\fi\raise 1\ht7\hbox{\lower1.05ex\hbox to 1\wd7{\hss\accent"13\hss}}\penalty 10000\hskip-1\wd7\penalty 10000\box7 }
\author[Y. Giga]{Yoshikazu Giga}
\address[Y. Giga]{
Graduate School of Mathematical Sciences, The University of Tokyo, 3-8-1 Komaba, Meguro-ku, Tokyo 153-8914, Japan}
\email{\href{mailto: Y. Giga <labgiga@ms.u-tokyo.ac.jp>}{labgiga@ms.u-tokyo.ac.jp}}
\author[Q.A. Ng\^o]{Qu\cfac oc Anh Ng\^o}
\address[Q.A. Ng\^o]{
University of Science, Vietnam National University, Hanoi, Vietnam}
\email{\href{mailto: Q.A. Ng\^o <nqanh@vnu.edu.vn>}{nqanh@vnu.edu.vn}}
\definecolor{LightCyan}{rgb}{0.88,1,1}
\def\@cite#1#2{[\textbf{#1}\if@tempswa, #2\fi]}
\begin{document}

\allowdisplaybreaks

\title[Exhaustive existence and non-existence results for Hardy--H\'enon equations in $\R ^n$]
{Exhaustive existence and non-existence results for Hardy--H\'enon equations in $\R ^n$}

\begin{abstract}
This paper concerns solutions to the Hardy--H\'enon equation
\[
-\Delta u = |x|^\sigma u^p 
\]
in $\R ^n$ with $n \geq 1$ and arbitrary $p, \sigma \in \R $. This equation was proposed by H\'enon in 1973 as a model to study rotating stellar systems in astrophysics. Although there have been many works devoting to the study of the above equation, at least one of the following three assumptions $p>1$, $\sigma \geq -2$, and $n \geq 3$ is often assumed. The aim of this paper is to investigate the equation in other cases of these parameters, leading to a complete picture of the existence/non-existence results for non-trivial, non-negative solutions in the full generality of the parameters. In addition to the existence/non-existence results, the uniqueness of solutions is also discussed.
\end{abstract}

\date{\bf \today \; at \, \currenttime}

\subjclass[2010]{Primary 35B53, 35J91, 35B33; Secondary 35B08, 35B51, 35A01}

\keywords{Hardy--H\'enon equation; Lane--Emden equation; Existence and non-existence; Liouville theorem}

\dedicatory{\it Dedicated to Professor Eiji Yanagida on the occasion of his 65th birthday}

\maketitle

\section{Introduction}

Over the last several decades, elliptic equations of the following form
\begin{equation}\label{eqn}
-\Delta u = K(x) u^p
\end{equation}
have captured much attention because they arise in various branches of pure and applied mathematics. For example, the simple model when $K \equiv 1$, also known as the Lane--Emden equation, has its root in astrophysics because the function $u$ can be thought of the density of stars. In this note, we are interested in non-trivial, non-negative, classical solutions to the following special case of \eqref{eqn}
\begin{subequations}\label{eqMAIN}
\begin{align}
-\Delta u = |x|^\sigma u^p \quad \text{in } \R ^n,
\tag*{ \eqref{eqMAIN}$_\sigma$}
\end{align}
\end{subequations}
where $n \geq 1$ and $p \in \R $ is a parameter. Equation \eqref{eqMAIN}$_\sigma$ was first proposed by H\'enon in 1973 as a model to study rotating stellar systems. Traditionally, the equation \eqref{eqMAIN}$_\sigma$ is called the H\'enon (resp. Hardy or Lane--Emden) equation if $\sigma > 0$ (resp. $\sigma <0$ or $\sigma =0$). However, as we are mostly interested in $\sigma \ne 0$ without any sign restriction we shall call \eqref{eqMAIN}$_\sigma$ the Hardy--H\'enon equation. 

So far, apart from the non-negativity of solutions, we have not precisely mentioned the definition of solutions that we are interested in, especially, we have not yet mentioned what `classical solution' really means. Usually, by a classical solution $u$ to \eqref{eqMAIN}$_\sigma$ we mean $u \in C^2(\R^n)$. However, the presence of $\sigma$ and $\alpha$ plays some role forcing us to work with solutions in suitable sense. For example, in the case $\sigma <0$ it does not make sense for \eqref{eqMAIN}$_\sigma$ to define at the origin. Therefore, let us mention precisely the notion of solution involved here. Throughout this paper, a function $u$ is called \textbf{classical solution} to \eqref{eqMAIN}$_\sigma$ if it belongs to
\begin{equation}\label{eqClassicalSolutionClass}
C^2 (\R^n) \;\; \text{ if } \sigma \geq 0, \quad
C(\R^n) \cap C^2 (\R^n \backslash \{ 0 \} ) \;\; \text{ if } \sigma < 0;
\end{equation}
and the equation \eqref{eqMAIN}$_\sigma$ is fulfilled pointwisely except at $x=0$ for $\sigma < 0$. Having this definition, in the case $\sigma \geq 0$, the above notion of classical solutions coincides with the standard one. For simplicity, from now on, by solution we mean classical solution.

Let us now briefly go through some literature review for solutions to the equation \eqref{eqMAIN}$_\sigma$. Equipped with \eqref{eqMAIN}$_\sigma$ is the important number $\psnma$, called the critical Sobolev exponent, which is given by
\[
\psnma =
\begin{cases}
\dfrac{n+2 + 2\sigma}{n-2 } & \text{ if } n > 2,\\
+\infty & \text{ if } n \leq 2.
\end{cases}
\]
This critical exponent, generalizing the classical ones for the case $\sigma = 0$, is important because the solvability of the equation \eqref{eqMAIN}$_\sigma$ often changes when $p$ passes through it. 

For the autonomous case, that is $\sigma = 0$, the existence of solutions to the equation \eqref{eqMAIN}$_0$ is now well-understood for general $p$, and there have been many works devoting to \eqref{eqMAIN}$_0$. For example, it is well-known that \eqref{eqMAIN}$_0$ has no classical solution if $1 < p < \psnmz$, see \cite{GS81, CLi91}. On the other hand, if $p \geq \psnmz,$ the equation \eqref{eqMAIN}$_0$ always has positive classical solutions; see for instance \cite{GS81, LGZ06b}. In the remaining cases, namely $p\leq 1$ and $n \geq 1$ we still have non-existence results. For example, if $n \leq 2$, we simply apply the maximum principle to super-harmonic functions. In the case $n \geq 3$ and $p <0$, one can use a trick via tackling the auxiliary function $u(x) + \varepsilon |x|^2$ and let $\varepsilon \searrow 0$. In the case $n\geq 3$ and $0<p<1$, this can be achieved by making use of the method developed in \cite{BK92} and in the case $p=1$ this is done by some simple integral estimate. It is worth noting that existence and non-existence results for higher order equations in $\R ^n$ have already been studied. These equations are slightly different from the classical equation \eqref{eqMAIN}$_0$ by replacing $-\Delta$ by $\Delta^m$ for any $m \in \{1,2,3,..\}$. For interested readers, we refer to the recent work \cite{NNPY18} for a complete picture of existence and non-existence results.

For the non-autonomous case, namely $\sigma \ne 0$, although there have also been many works devoting to the equation, the situation is not well-understood, especially there are some cases for which no result is known, and this is the starting point of this work. Among others, Ni proved in \cite{Ni82, Ni86} the existence of classical solution for 
\[
p \geq \ps ( \sigma) \quad \text{and} \quad \sigma > -2.
\] 
Hence, we are left with the subcritical case $p < \ps (\sigma)$. As far as we know, the subcritical case $1<p<\ps ( \sigma)$ was firstly classified by Reichel and Zou. In \cite{RZ00}, they considered a cooperative semilinear elliptic system with refinement of the moving sphere method. The result of Reichel and Zou indicates that \eqref{eqMAIN}$_\sigma$ does not admit any classical solution if 
\[
1<p<\ps( \sigma ) \quad \text{and} \quad \sigma > -2;
\] 
see \cite[Theorem 2]{RZ00}. On the other hand, following the results due to Mitidieri and Pohozaev in \cite[Theorem 6.1]{MP01}, Dancer, Du and Guo in \cite[Theorem 2.3]{DDG11}, Brezis and Cabr\'e in \cite{BC98}, the condition $\sigma > -2$ is necessary for the existence of solutions to \eqref{eqMAIN}$_\sigma$. Thus, we have a complete picture for the existence problem of solutions to \eqref{eqMAIN}$_\sigma$ in the case $p > 1$, namely, we must assume both 
$$\sigma > -2 \quad \text{and} \quad p \geq \ps ( \sigma).$$ 
In the case of higher order Hardy--H\'enon equations, we refer the reader to \cite{NY20}. 

From the above discussion, we are left with the case $n \geq 1$, $p \leq 1$, and $\sigma \in \R$. This leads us to a recent work due to Dai and Qin, which also motivates us to write this note; see \cite{DQ20}. To be more precise, the authors in \cite{DQ20} consider solutions to \eqref{eqMAIN}$_\sigma$ under the two assumptions $0 < p \leq 1$ and $n \geq 3$ and obtain a non-existence result for solutions to \eqref{eqMAIN}$_\sigma$ for all $\sigma \in \R$. To the best of our knowledge, there is no result for the case $p \leq 0$ and general $\sigma\in R$. In fact, putting the above results together, we are left with the following two cases: 
\begin{equation}\label{caseA}
\left.
\begin{aligned}
& p \leq 0, & \sigma \in \R, & \quad \text{ and } \quad n \geq1, \qquad\\
& 0 < p \leq 1, & \sigma \in \R, & \quad \text{ and } \quad n=1,2. 
\end{aligned}
\right\}
\tag{I}
\end{equation} 
In this work, by mainly considering the above cases \eqref{caseA}, we aim to complete the picture of existence/non-existence results for non-trivial, non-negative, classical solutions to \eqref{eqMAIN}$_\sigma$. As the cases $n \geq 2$ and $n=1$ are quite different, our main result consists of two theorems, one is for $n \geq 2$ that we are going to describe soon, and the other is for $n=1$.

For the case $n \geq 2$, our result is as follows.

\begin{theorem}\label{thm-MAIN}
Let $n \geq 2$, $p \in \R$, and $\sigma \in \R$. Then, the equation \eqref{eqMAIN}$_\sigma$ admits at least one non-trivial, non-negative solution in the sense of \eqref{eqClassicalSolutionClass} if, and only if, 
$$n \geq 3, \quad p \geq \frac{n+2}{n-2}, \quad \text{and} \quad \sigma >-2.$$
\end{theorem}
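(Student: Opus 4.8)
The plan is to split the statement into its two implications and, within each, to organise the argument according to the regions of the parameter plane described in \eqref{caseA} together with the already-known results quoted in the introduction. The ``if'' direction is essentially classical: when $n\geq 3$, $p\geq (n+2)/(n-2)=\ps(0)$ and $\sigma>-2$, one has $p\geq\ps(\sigma)$ precisely when $\sigma\leq 0$, and in that range the existence of a positive classical solution follows from Ni's construction in \cite{Ni82, Ni86}; for $\sigma>0$ we still have $p\geq\ps(0)>\ps(\sigma)$ is false in general, so one must check that Ni's result, or the explicit radial solutions obtained by solving the associated ODE for the Emden--Fowler transform, still produces a solution in the class \eqref{eqClassicalSolutionClass}. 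The only subtlety is regularity at the origin when $\sigma>0$: since $|x|^\sigma u^p$ is continuous there, elliptic regularity upgrades a radial weak solution to $C^2(\R^n)$, so the solution indeed lies in the stated class.

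For the ``only if'' direction I would argue by contraposition, eliminating each remaining region in turn. First, if $n\geq 3$ and $1<p<\ps(\sigma)$ with $\sigma>-2$, non-existence is \cite[Theorem 2]{RZ00}; if $\sigma\leq -2$ (any $p>1$), non-existence follows from \cite[Theorem 6.1]{MP01} and \cite[Theorem 2.3]{DDG11} as recalled above; and if $n\leq 2$ with $p>1$ the critical exponent is $+\infty$, so the subcritical obstruction covers everything. This disposes of all cases with $p>1$. The genuinely new work concerns $p\leq 1$, i.e.\ exactly the region \eqref{caseA} restricted to $n\geq 2$: namely $0<p\leq 1$ with $n=2$, and $p\leq 0$ with $n\geq 2$ and $\sigma\in\R$ arbitrary. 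For $0<p\leq 1$ and $n\geq 3$ one invokes Dai--Qin \cite{DQ20}; it remains to handle $0<p\leq 1$, $n=2$ and the whole range $p\leq 0$, $n\geq 2$.

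For these residual cases I expect the main tools to be: (i) for $p\leq 0$, the auxiliary-function trick alluded to in the introduction --- passing to $u(x)+\varepsilon|x|^2$, using that $-\Delta(u+\varepsilon|x|^2)=|x|^\sigma u^p-2n\varepsilon<0$ near where $u$ is large, deriving a contradiction with the minimum principle as $\varepsilon\searrow 0$ --- adapted to accommodate the weight $|x|^\sigma$ and the possible singularity at the origin; (ii) for $n=2$ and $0<p\leq 1$, an integral/averaging estimate on spheres $\partial B_r$, exploiting that a non-negative superharmonic function on $\R^2$ whose spherical average $\bar u(r)$ satisfies $(r\bar u'(r))'=-r^{1+\sigma}\,\overline{u^p}(r)\leq 0$ is forced either to be constant or to grow logarithmically, which is incompatible with the sign of the right-hand side unless $u\equiv 0$; concavity of $p\mapsto u^p$ for $0<p\leq 1$ lets one bound $\overline{u^p}$ from below by $\bar u^{\,p}$ via Jensen, closing the argument. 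The main obstacle will be the borderline behaviour when $\sigma<-2$ in the two-dimensional superharmonic estimate, where the weight $r^{1+\sigma}$ is non-integrable near $r=0$ and one must argue carefully that the integral $\int_0^1 r^{1+\sigma}\overline{u^p}(r)\,dr$ still diverges (forcing $u(0)=0$, hence $u\equiv 0$ by strong maximum principle away from the origin), rather than being saved by fast decay of $u$ near the origin; handling this uniformly in $\sigma$, and separately treating $\sigma\geq -2$ versus $\sigma<-2$, is where the real care is needed.
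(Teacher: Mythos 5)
Your citation layer for $p>1$ matches the paper, but there are two genuine problems. First, the exponent: the threshold in the theorem has to be read as $\ps(\sigma)=\frac{n+2+2\sigma}{n-2}$ (this is what Table \ref{table-1} and the quoted results of \cite{Ni82,RZ00} encode), and your attempt to keep the literal value $\frac{n+2}{n-2}$ breaks the ``if'' direction: for $\sigma>0$ and $\frac{n+2}{n-2}\le p<\ps(\sigma)$ the very theorem of Reichel--Zou you invoke forbids any solution, so there is no radial solution of Ni type to ``upgrade by elliptic regularity'' --- the regularity discussion is beside the point, the solution simply does not exist in that range. (Symmetrically, for $-2<\sigma<0$ explicit solutions exist already at $p=\ps(\sigma)<\frac{n+2}{n-2}$, so the literal ``only if'' cannot be proved either.) With the correct reading, the $p>1$ cases and the case $0<p\le1$, $n\ge3$ are pure citations, exactly as in the paper; the new content is $p\le0$, $n\ge2$ and $0<p\le1$, $n=2$, and this is where your proposal has real gaps.

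For $p\le0$ your only tool is the auxiliary function $u+\varepsilon|x|^2$, and you do not say how it survives the weight: at the minimum point $x_\varepsilon$ one only gets $|x_\varepsilon|^\sigma u(x_\varepsilon)^p\le 2n\varepsilon$, which is useless if $x_\varepsilon$ drifts to infinity (where $|x_\varepsilon|^\sigma\to0$ for $\sigma<0$) or to the origin (where, for $\sigma<0$, the equation is not satisfied and $u$ is merely continuous, so $\Delta$ cannot be tested there); for $p=0$ and $\sigma\le-2$ a short computation shows the trick yields no contradiction at all. The paper instead works with spherical averages: Jensen for the \emph{convex} map $t\mapsto t^p$ ($p<0$), together with $u(0)>0$ and $\overline u(r)\le u(0)$ from Lemma \ref{lem-basic-n>=2}, gives $-(r^{n-1}\overline u'(r))'\ge u(0)^p r^{n-1+\sigma}$, and the contradiction is obtained at infinity when $\sigma>-2$ and near the origin when $\sigma\le-2$ (Sections \ref{sec-n>=2,p<0}--\ref{sec-n>=2,p=0}); none of these ingredients appear in your sketch. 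For $n=2$, $0<p<1$ your key inequality is reversed: Jensen for the \emph{concave} function $t\mapsto t^p$ gives $\overline{u^p}\le\overline u^{\,p}$, not the lower bound you need, so the averaged equation does not close --- this is exactly the difficulty the paper flags and circumvents via the Serrin--Zou/Dai--Qin device \cite{SZ96,DQ20}: one works with $u^\delta$, the quantities $U=\overline{u^\delta}^{1/\delta}$ and $\phi=\overline{u^{\delta p}}^{1/\delta}$, the superharmonic lower bound $u(x)\ge c|x|^{-1}$ for $|x|\ge1$, and then lets $\delta\searrow0$ (Section \ref{sec-n=2,0<p<=1}). Your instinct that two-dimensional superharmonicity is decisive can in fact be made rigorous in a cleaner way (for $n=2$, $\overline u$ is concave as a function of $\log r$ on all of $(0,\infty)$ and nonnegative, hence constant, which forces $u\equiv0$), but that is not the argument you wrote; the ``constant or logarithmic growth'' dichotomy combined with the wrong-way Jensen bound does not yield the theorem.
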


For the reader convenience, we summary the result described in Theorem \ref{thm-MAIN} in Table \ref{table-1} below.

{\scriptsize
\begin{center}
\begin{longtable}{
>{\centering\arraybackslash}p{.055\textwidth}||
>{\centering\arraybackslash}p{.13\textwidth}|
>{\centering\arraybackslash}p{.13\textwidth}|
>{\centering\arraybackslash}p{.13\textwidth}|
>{\centering\arraybackslash}p{.19\textwidth}|
>{\centering\arraybackslash}p{.19\textwidth}
} 
\hline\rowcolor{LightCyan}
& $p < 0 $ & $p = 0$ 	& $0 < p \leq 1$	& \multicolumn{2}{c}{$p>1 $}\\
\hline
$n = 2$ & \textbf{NO} & \textbf{NO} & \textbf{NO} & \multicolumn{2}{c}{\textbf{NO} }\\
& Sec. \ref{sec-n>=2,p<0} & Sec. \ref{sec-n>=2,p=0} & sect. \ref{sec-n=2,0<p<=1} & \multicolumn{2}{c}{ $\sigma > -2$: \cite{RZ00}, $\sigma \leq -2$: \cite{DDG11}} \\
\hline\rowcolor{LightCyan}
& $p < 0 $ & $p = 0$ 	& $0 < p \leq 1$	& $1< p <\frac{n+2+2\sigma}{n-2}$ 	& $p \geq \frac{n+2+2\sigma}{n-2}$	\\
\hline
$n\geq 3$ & \textbf{NO} & \textbf{NO} 
& \textbf{NO} & \textbf{NO} &\textbf{YES} if $\sigma > -2$ \\
& Sec. \ref{sec-n>=2,p<0} & Sec. \ref{sec-n>=2,p=0} & \cite{DQ20} & $\sigma >-2$: \cite{Ni82, Ni86} & $\sigma >-2$: \cite{Ni82} \\
& & & & $\sigma \leq -2$: \cite{DDG11} & $\sigma \leq -2$: \cite{DDG11}\\
\hline
\caption{Results for $-\Delta u = |x|^\sigma u^p$ in $\R^n$ with $n \geq 2$.}
\label{table-1}
\end{longtable}
\end{center}
}

\bigskip
Now we consider the case $n=1$ and as we shall see later this case is very special. Note that in this case, our equation \eqref{eqMAIN}$_\sigma$ becomes
\begin{equation}\label{eqMAIN-n=1-R}
-u''(x) = |x|^\sigma u(x)^p \quad \text{in } \R,
\end{equation}
and it is understood with $x \in \R$ if $\sigma \geq 0$ and $x \ne 0$ if $\sigma < 0$. Let us assume for a moment that $p \leq 0$. For each $a\in (0,1)$ and $C>0$, direct computation shows 
$$-(C |x|^a)'' = C a(1-a) |x|^{a-2}= C^\frac{2+\sigma}{a} a(1-a) |x|^\sigma \big( C |x|^a \big)^\frac{a-2-\sigma}{a}.$$ 
Hence, the even function 
$$ u_a (x) :=c_a |x|^a$$ 
solves \eqref{eqMAIN}$_\sigma$ in $\R$ with
$$a= (2+\sigma)/(1-p), \quad c_a = \big( a ( 1-a ) \big)^{1/(p-1)}.$$
Note that the condition $a \in (0,1)$ tells us that either $\sigma > -2$ and $p<-1-\sigma$ or $\sigma < -2$ and $p>-1-\sigma$. Hence, it follows from the above calculation that there are solutions to \eqref{eqMAIN}$_\sigma$ in $\R$ in the following regime
\begin{itemize}
 \item either $0> \sigma > -2$ and $p<-1-\sigma$
 \item or $\sigma < -2$ and $p>-1-\sigma$.
\end{itemize}
We note that we need $\sigma<0$ because $a<1$. Hence, we are forced to consider \eqref{eqMAIN}$_\sigma$ in $\R$ in the following opposite regime
\begin{itemize}
 \item $\sigma \geq 0$ and $p \in \R$,
 \item $\sigma > -2$ and $p \geq -1-\sigma$,
 \item $\sigma = -2$ and $p \in \R$,
 \item $\sigma < -2$ and $p\leq-1-\sigma$.
\end{itemize}
We should emphasize that in the above four cases, there are some partly known results. For example, if $\sigma \geq -2$ and $p>1$, then we have already known that the equation has no solution; see \cite{MP01} for the case $\sigma = -2$ and \cite{PS12} for the case $\sigma >-2$. Hence, the remaining cases that we actually have to consider is the following
\begin{equation}\label{caseB}
\left.
\begin{aligned}
 & \sigma \leq -2 & \quad \text{ and } \quad p \leq -1-\sigma, \qquad\\
 & 0>\sigma > -2 & \quad \text{ and } \quad -1-\sigma \leq p \leq 1, \qquad\\
 & \sigma \geq 0 & \quad \text{ and } \quad p \leq 1, \qquad\\
\end{aligned}
\right\}
\tag{II}
\end{equation} 
By giving a complete answer to the cases in \eqref{caseB}, we obtain the following complete picture of existence/non-existence of solutions to \eqref{eqMAIN-n=1-R}.

\begin{theorem}\label{thm-MAIN=n=1-R}
Let $p \in \R$ and $\sigma \in \R$. Then, the equation \eqref{eqMAIN-n=1-R} in $\R$ admits at least one solution in the sense of \eqref{eqClassicalSolutionClass} if, and only if, one of the following two cases occurs
\begin{enumerate}
 \item either $\sigma < -2$ and $p> -1-\sigma$
 \item or $-2 < \sigma <0$ and $p<-1-\sigma$.
\end{enumerate}
\end{theorem}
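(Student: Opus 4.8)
The plan is to treat the two implications separately; the ``if'' direction is essentially free and the work is the ``only if'' direction. For ``if'', one simply records that the even functions $u_a(x)=c_a|x|^a$ with $a=(2+\sigma)/(1-p)$ and $c_a=\big(a(1-a)\big)^{1/(p-1)}$ exhibited in the Introduction are genuine classical solutions of \eqref{eqMAIN-n=1-R} exactly when $a\in(0,1)$, which (as noted there) forces $\sigma<0$ and is equivalent to cases (1) and (2). For ``only if'', assume a non-trivial, non-negative classical solution $u$ exists; one must show $(\sigma,p)$ lies in (1) or (2), i.e. rule out the regimes \eqref{caseB}, the remaining case $\sigma\ge-2$, $p>1$ being already excluded by \cite{MP01,PS12}. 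The case $\sigma\ge0$ is immediate: then the equation holds on all of $\R$ with $u\in C^2(\R)$, so $u''=-|x|^\sigma u^p\le0$ (and $u>0$ when $p\le0$), hence $u$ is concave and bounded below on $\R$, so constant, and a nonzero constant fails the equation at $x=1$ — this kills line (iii) of \eqref{caseB}.

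So assume $\sigma<0$, where the equation holds on $\R\setminus\{0\}$. First I would establish the structural facts: on each of $(0,\infty)$ and $(-\infty,0)$, $u$ is either identically $0$ (only possible if $p>0$) or strictly positive, in which case it is strictly concave there and monotone toward the origin; since $|x|^\sigma$ is even, one may assume $u>0$ and $u'\ge0$ on $(0,\infty)$. Integrating the equation twice and using that $u\ge0$ cannot blow down at $0$ or at $+\infty$ gives the two obstructions
\[
\int_0^1 x^{1+\sigma}u(x)^p\,dx<\infty
\qquad\text{and}\qquad
\int_1^\infty x^{\sigma}u(x)^p\,dx<\infty .
\]
Inserting into the second the bounds $u(x)\ge u(1)>0$ and $u(x)\le Cx$ (concavity) for $x\ge1$, one gets a contradiction whenever $\sigma\ge-1$ and $p\ge-1-\sigma$; inserting into the first the bound $u^p\ge c>0$ on $(0,1)$ (from monotonicity), one sees that $\sigma\le-2$ forces $u(0)=0$ and moreover excludes $p\le0$ there. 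What is left is the strip $-2<\sigma<-1$, $-1-\sigma\le p\le1$ (case (ii)), and the wedge $\sigma\le-2$, $0<p\le-1-\sigma$ (the remainder of case (i)).

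For $-2<\sigma<-1$ I would argue that a positive solution on $(0,\infty)$ must be \emph{unbounded} — otherwise $M-u(x)=\int_x^\infty u'\ge c\int_x^\infty t^{\sigma+1}\,dt=\infty$ since $\sigma+1>-1$ — and \emph{sublinear}, since $u(x)/x\to L>0$ would violate the second obstruction as $\sigma+p\ge-1$; then the substitution $t=\ln x$, $v(t)=u(e^{t})$ turns the equation into $v''=v'-e^{(\sigma+2)t}v^{p}$ with $0\le v'\le v$ and $v=o(e^{t})$, so from $\sigma+2>0$ and $\sigma+1+p\ge0$ one gets $e^{(\sigma+2)t}v^{p-1}\to\infty$ and hence $v''\le v\big(1-e^{(\sigma+2)t}v^{p-1}\big)\to-\infty$, impossible for $v>0$. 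For $\sigma\le-2$, $0<p\le-1-\sigma$, where $u(0)=0$, the plan is a bootstrap near the origin: from $u(x)\ge c_0x$ the equation improves the lower bound to $u(x)\ge c_k x^{\alpha_k}$ (with logarithmic factors on the critical lines), where $\alpha_{k+1}=p\alpha_k+(2+\sigma)$ has fixed point $(2+\sigma)/(1-p)$; tracking the constants as well, the exponents are pushed out of $(0,1)$ — forcing a divergent integral, i.e. $u(x)=\infty$ — or else $\alpha_k\to0$ while $c_k\to\infty$, so that $c_kx^{\alpha_k}\to\infty$, either way contradicting $u(x)<\infty$. I expect the main obstacle to be precisely the borderline loci $\sigma=-2$ and $p=-1-\sigma$, where every soft estimate is only critical: there one exploits that for $\sigma=-2$ the substitution $t=\ln x$ makes the equation \emph{autonomous}, $v''=v'-v^{p}$, with the monotone energy $E=\tfrac12(v')^{2}+\tfrac1{p+1}v^{p+1}$ (and, when $p=1$, the Euler form $x^{2}u''+u=0$, whose only solutions positive on all of $(0,\infty)$ are trivial), while on the line $p=-1-\sigma$ the weighted exponents in the bootstrap and in the obstructions must be handled with the logarithmic refinements just mentioned. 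Assembling these cases exhausts \eqref{caseB} and finishes the ``only if'' direction, hence the theorem.
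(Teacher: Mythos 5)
Your proposal is correct in substance but follows a genuinely different route from the paper. The paper obtains Theorem \ref{thm-MAIN=n=1-R} almost for free from the half-line Theorem \ref{thm-MAIN=n=1}: restriction to $(0,+\infty)$ transfers all non-existence results, the explicit $C|x|^{(2+\sigma)/(1-p)}$ gives existence, and the only extra whole-line case $\sigma\geq 0$ is killed by exactly the concavity argument you use. The half-line cases are in turn settled by short integral estimates (near the origin for $\sigma\leq-2$, $p\leq 0$, and via integrating $u'u^{-p}$ for $\sigma\leq-2$, $0<p<1$; at infinity for $\sigma\geq-1$), the explicit Cauchy--Euler solution at $(\sigma,p)=(-2,1)$, and, crucially, the Kelvin-type duality $r\mapsto ru(1/r)$, which maps the strip $-2<\sigma<-1$, $-1-\sigma\leq p<1$ onto the already-settled $\widetilde\sigma\leq-2$, $p<1$ case and the wedge $\sigma<-2$, $1\leq p\leq-1-\sigma$ onto the known $\widetilde\sigma\geq-2$, $p\geq1$ results of \cite{MP01,PS12}. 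You instead argue directly on the whole line: your two obstruction integrals and the case bookkeeping are correct, and your treatment of the strip (unboundedness, strict sublinearity, then $t=\log x$ and $v''\leq v\big(1-e^{(\sigma+2)t}v^{p-1}\big)\to-\infty$) is a clean self-contained substitute for the duality which even absorbs $p=1$ there without quoting \cite{PS12}. The laborious part is your wedge $\sigma\leq-2$, $0<p\leq-1-\sigma$: the exponent bootstrap does close (for $\sigma=-2$, $p<1$ one checks $\alpha_k=p^k$ and $c_{k+1}\geq (1-p)c_k^p/p^{k+1}$, so indeed $c_kx^{\alpha_k}\to\infty$), but on the borderline $p=-1-\sigma>1$ the iteration is stuck at the fixed point $a=1$, so your stated dichotomy (exponents leave $(0,1)$, or $\alpha_k\to0$ with $c_k\to\infty$) does not literally apply: there one must accumulate logarithmic factors $x(\log(1/x))^{\beta_k}$ whose constants may in fact decay, and the contradiction is extracted only by evaluating at sufficiently small $x$ — workable, as you anticipate, but noticeably heavier than the paper's $u'u^{-p}$ integration plus Kelvin duality, which disposes of the whole wedge in a few lines. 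Finally, since your argument leans on continuity at the origin (for $\int_0^1 x^{1+\sigma}u^p\,dx<\infty$ and for $u(0)=0$), it proves the whole-line statement directly but, unlike the paper's route, does not simultaneously deliver the half-line Theorem \ref{thm-MAIN=n=1}.
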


We summary the above existence/non-existence results described in Theorem \ref{thm-MAIN=n=1-R} in Table \ref{table-n=1-R} below. 

{\scriptsize
\begin{center}
\begin{longtable}{
>{\centering\arraybackslash}p{.14\textwidth}||
>{\centering\arraybackslash}p{.115\textwidth}|
>{\centering\arraybackslash}p{.115\textwidth}|
>{\centering\arraybackslash}p{.115\textwidth}|
>{\centering\arraybackslash}p{.115\textwidth}|
>{\centering\arraybackslash}p{.115\textwidth}|
>{\centering\arraybackslash}p{.115\textwidth}
}
\hline\rowcolor{LightCyan}
&
\multicolumn{2}{c|}{$p \leq 0 $}
	&\multicolumn{2}{c|}{$0<p <1 $}
	& $1 \leq p \leq -1-\sigma$	
	& $p>-1-\sigma$ \\
\hline
$\sigma < -2$ 
& \multicolumn{2}{c|}{\textbf{NO}} 
&\multicolumn{2}{c|}{ \textbf{NO} } 
& \textbf{NO} 
& \textbf{YES} \\
 & \multicolumn{2}{c|}{sect. \ref{sec-s<-2,p<=0}}
 & \multicolumn{2}{c|}{sect. \ref{sec-n=1,s<=-2,0<p<=1}} 
 & sect. \ref{sec-n=1,s<=-2,0<=p<=-1-s} 
 & $C|x|^\frac{2+\sigma}{1-p}$ \\	
\hline\rowcolor{LightCyan}
&
\multicolumn{2}{c|}{$p \leq 0 $}
	&\multicolumn{2}{c|}{$0<p <1 $}
	& \multicolumn{2}{c}{$p \geq 1 $} \\
\hline
$\sigma= -2$ 
& \multicolumn{2}{c|}{\textbf{NO}} 
&\multicolumn{2}{c|}{ \textbf{NO} } 
& \multicolumn{2}{c}{\textbf{NO}} \\
 & \multicolumn{2}{c|}{sect. \ref{sec-s<-2,p<=0}}
 & \multicolumn{2}{c|}{sect. \ref{sec-n=1,s<=-2,0<p<=1}} 
 & \multicolumn{2}{c}{\cite[Theo. 8.1]{MP01}, sect. \ref{sec-n=1,s=-2,p=1}} \\	 
\hline\rowcolor{LightCyan}
&
\multicolumn{2}{c|}{$p \leq 0 $}
	& $0< p < -1-\sigma $ 
	& $-1 - \sigma \leq p < 1$	
	& \multicolumn{2}{c}{$p \geq 1 $}	\\
\hline	
$-2 < \sigma <-1$ 
& \multicolumn{3}{c|}{\textbf{YES}} 
& \textbf{NO} 
& \multicolumn{2}{c}{\textbf{NO}}\\
& \multicolumn{3}{c|}{$C|x|^\frac{2+\sigma}{1-p}$} 
& sect. \ref{sec-n=1,-2<s<-1,-1-s<p<1} 
& \multicolumn{2}{c}{\cite[Prop. A.1]{PS12}}\\
\hline\rowcolor{LightCyan}
	& $p < -1 - \sigma $ 
	& $-1 - \sigma \leq p \leq 0$	
& \multicolumn{2}{c|}{$0< p <1 $}
&\multicolumn{2}{c}{$p \geq 1 $}	\\
\hline	
$-1 \leq \sigma <0$ 
& \textbf{YES} 
& \textbf{NO} 
& \multicolumn{2}{c|}{\textbf{NO}} 
 & \multicolumn{2}{c}{\textbf{NO}}\\
& $C|x|^\frac{2+\sigma}{1-p}$ 
& sect. \ref{sec-s<=-1,-1-s<=p<=0} 
& \multicolumn{2}{c|}{sect. \ref{sec-n=1,-1<=s,0<p<1}} 
& \multicolumn{2}{c}{\cite[Prop. A.1]{PS12}, sect. \ref{sec-n=1,-1<=s,0<p<1}}\\
\hline
\rowcolor{LightCyan}
	& $p < -1 - \sigma $ 
	& $-1 - \sigma \leq p \leq 0$	
& \multicolumn{2}{c|}{$0< p <1 $}
&\multicolumn{2}{c}{$p \geq 1 $}	\\
\hline	
$\sigma \geq 0$ 
& \multicolumn{4}{c|}{\textbf{NO}} 
& \multicolumn{2}{c}{\textbf{NO}} \\
& \multicolumn{4}{c|}{sect. \ref{ssect-thm-n=1-R}}
& \multicolumn{2}{c}{\cite[Prop. A.1]{PS12}, sect. \ref{ssect-thm-n=1-R}}\\
\hline
\caption{Results for $- u'' (x)= |x|^\sigma u(x)^p$ in $\R$.}
\label{table-n=1-R}
\end{longtable}
\end{center}
}

\bigskip
From Table \ref{table-n=1-R} above, it is immedate to notify that our equation \eqref{eqMAIN-n=1-R} does not admit any non-negative $C^2$-solution if $\sigma \geq 0$. Hence, to make our equation more interesting, we only consider \eqref{eqMAIN-n=1-R} on the half line $[0, +\infty)$, namely, we are interested in non-negative, non-trivial solution to
\begin{equation}\label{eqMAIN-n=1}
-u''(x) = |x|^\sigma u(x)^p \quad \text{in } (0,+\infty).
\end{equation}
A function $u$ is called \textbf{classical solution} to \eqref{eqMAIN-n=1} if it belongs to $C^2 (0,+\infty)$. (This is also known as punctured solutions in \cite{NY20}.) Our next result is a complete picture of existence/non-existence of classical solutions to \eqref{eqMAIN-n=1}.

\begin{theorem}\label{thm-MAIN=n=1}
Let $p \in \R$ and $\sigma \in \R$. Then, the equation \eqref{eqMAIN-n=1} in $(0, +\infty)$ admits at least one $C^2$-solution if, and only if, one of the following two cases occurs
\begin{enumerate}
 \item either $\sigma < -2$ and $p> -1-\sigma$
 \item or $\sigma >-2$ and $p<-1-\sigma$.
\end{enumerate}
\end{theorem}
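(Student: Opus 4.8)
The plan is to reduce everything to the one-dimensional ODE $-u''=x^\sigma u^p$ on $(0,\infty)$ and exploit convexity/monotonicity arguments together with the explicit barriers already written down in the introduction. First I would dispose of the ``if'' direction: in case (1), $\sigma<-2$ and $p>-1-\sigma$, and in case (2), $\sigma>-2$ and $p<-1-\sigma$, one checks that $a:=(2+\sigma)/(1-p)$ satisfies the sign conditions needed so that $u_a(x)=c_a x^a$ with $c_a=(a(1-a))^{1/(p-1)}$ is a genuine positive $C^2$-solution on $(0,\infty)$; here the restriction $a<1$ that was forced in the $\R$-setting is no longer needed, so the range of admissible $(\sigma,p)$ widens exactly as the statement claims. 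This is a direct computation identical to the one already displayed for \eqref{eqMAIN-n=1-R}, so I would only remark that on the punctured half-line the power $a$ is allowed to be any real number with $a(1-a)>0$, i.e.\ $0<a<1$, OR to have $a(1-a)<0$ paired with $p<1$ giving $c_a>0$ as well — I would state precisely which sign of $a$ goes with which case and check $c_a>0$ in each.

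Next I would handle the ``only if'' direction, which splits along the sign of $2+\sigma$. The key structural fact is that a nonnegative solution $u$ on $(0,\infty)$ has $-u''=x^\sigma u^p\ge 0$, so $u$ is concave on $(0,\infty)$; a nonnegative concave function on a half-line is nondecreasing (else it would eventually become negative), hence $u'\ge 0$ and $u$ has a limit $\ell\in(0,\infty]$ at $+\infty$, and either $u\equiv$ const $=0$ (excluded) or $u>0$ on some $(x_0,\infty)$. Now integrate: $u'(x)=u'(X)+\int_x^X t^\sigma u(t)^p\,dt$ and let $X\to\infty$. When $\sigma\ge -2$ I would compare with the power solutions and the known results quoted in the introduction — for $p>1$ nonexistence is \cite[Prop.\ A.1]{PS12} and \cite[Theo.\ 8.1]{MP01}, so the work is confined to $p\le 1$, where concavity forces $u(t)\ge ct$ for large $t$ (if $\ell=\infty$) or $u(t)\ge c>0$ (if $\ell<\infty$); plugging this lower bound into $\int^\infty t^\sigma u(t)^p\,dt$ and using $\sigma\ge-2$, $p\le 1$ one shows the integral diverges, so $u'$ would become negative or $u'(x)\to+\infty$, contradicting $u'\ge 0$ together with concavity (which forces $u'$ nonincreasing). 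When $\sigma<-2$ and $p\le-1-\sigma$ (equivalently $2+\sigma+p\le 0$, $2+\sigma<0$) I would run the same integral test near $+\infty$: concavity again gives $u(t)\ge ct$ or $u(t)\ge c$, and $t^\sigma u(t)^p$ is then $\gtrsim t^{\sigma+p}$ or $\gtrsim t^\sigma$; since $\sigma+p\le -2$ and $\sigma<-2$ the tail integral converges, so this does NOT immediately contradict; instead I would look at the behaviour near $x=0^+$, where concavity plus $u\ge 0$ gives $u(x)\ge u(x_0)\cdot(x/x_0)$ is false in general — rather $u$ is concave so $u(x)\ge$ the chord, meaning $\liminf_{x\to0}u(x)/x$ could be large; the correct contradiction comes from integrating $-u''=x^\sigma u^p$ on $(0,1)$: if $u(0^+)>0$ then $x^\sigma u^p\sim x^\sigma$ is non-integrable at $0$ (since $\sigma<-2<-1$), forcing $u'(0^+)=+\infty$, incompatible with $u'$ bounded by $u'(x_0)$ via monotonicity of $u'$; if instead $u(0^+)=0$, then $u(x)\le u'(0^+)x$ near $0$, and feeding $u(x)^p\gtrsim x^p$ (recall $p$ could be negative making this blow up, or positive) into $\int_0 x^{\sigma+p}dx$, which diverges precisely when $\sigma+p\le -1$, i.e.\ $p\le-1-\sigma$ — exactly the forbidden range — again forcing $u'(0^+)=+\infty$ and a contradiction. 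Finally the sliver $\sigma<-2,\ p\le 1$ with $1\le p\le-1-\sigma$ and $0<p<1$ are subsumed by this same near-zero integral test since there $\sigma+p\le\sigma+1<-1$.

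I expect the main obstacle to be the cases where neither the integral test at $0$ nor at $\infty$ alone is decisive and one must combine them — concretely, when $u(0^+)=0$, so that the $x=0$ estimate needs the sharp lower bound $u(x)\gtrsim x$ (from $u'(0^+)>0$, which itself needs $u\not\equiv 0$ and concavity), and simultaneously one must rule out $u'(0^+)=0$, in which case $u$ would vanish to higher order at $0$ and the lower bound $u(x)\gtrsim x$ fails. The remedy is a bootstrap: if $u'(0^+)=0$ and $u(0^+)=0$ then $u(x)=\int_0^x\int_0^s t^\sigma u(t)^p\,dt\,ds$, and one iterates this Volterra-type identity to show $u$ must grow at least like a fixed power $x^{a}$ with $a=(2+\sigma)/(1-p)$ forced by scaling, then checks this exponent is incompatible with $p\le-1-\sigma$ (it would give $a\le 1$ with the wrong sign of $c_a$, or $c_a<0$, contradicting $u\ge0$ and $u\not\equiv0$). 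I would also need to be careful in the borderline $p=-1-\sigma$ (so $a$ undefined / the exponent degenerates to a logarithm) and handle it by a logarithmic barrier or a direct ODE argument. The rest — the explicit solutions in the existence direction and the quoted nonexistence for $p>1$, $\sigma\ge-2$ — is routine.
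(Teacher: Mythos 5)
Your existence direction is essentially the paper's: the explicit power $u_a(x)=c_ax^a$, $a=(2+\sigma)/(1-p)$, works in both stated cases. One correction there: the two cases of Theorem \ref{thm-MAIN=n=1} correspond exactly to $a\in(0,1)$ again, so the constraint $a<1$ is \emph{not} what gets dropped on the half-line (it is still needed for $c_a=(a(1-a))^{1/(p-1)}$ to be a positive real, and your alternative ``$a(1-a)<0$ with $c_a>0$'' cannot occur); what is dropped is the regularity requirement at $x=0$, which is why $\sigma\ge0$ becomes admissible compared with Theorem \ref{thm-MAIN=n=1-R}. The genuine gaps are in the ``only if'' direction. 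Your at-infinity integral test for $\sigma\ge-2$, $p\le1$ does not close the strip $-2\le\sigma<-1$ with $-1-\sigma\le p\le1$ (nor $\sigma=-2$ with arbitrary $p\le1$): there $\int^{\infty}t^{\sigma}dt$ converges, so the bound $u\ge c$ yields nothing for $0<p\le1$; your dichotomy ``$u(t)\ge ct$ if $u\to\infty$'' is false for concave functions (the genuine solutions $c_a t^a$, $0<a<1$, are unbounded yet sublinear); and for $p<0$ a lower bound on $u$ bounds $u^p$ from above, not below (one must use $u\le Ct$, as in section \ref{sec-s<=-1,-1-s<=p<=0}). The paper closes this strip by entirely different means: near-origin arguments (sections \ref{sec-s<-2,p<=0}, \ref{sec-n=1,s<=-2,0<p<=1}), the Cauchy--Euler analysis for $\sigma=-2$, $p=1$ (section \ref{sec-n=1,s=-2,p=1}), the Kelvin-type transform $v(r)=ru(1/r)$ sending $(\sigma,p)$ to $(-p-\sigma-3,p)$ (section \ref{sec-n=1,-2<s<-1,-1-s<p<1}), and \cite[Prop. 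A.1]{PS12} for $p\ge1$ (valid at $p=1$ by Remark \ref{rmk-PS12}).

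For $\sigma<-2$, $p\le-1-\sigma$ your near-zero test is also insufficient. Two local slips first: since $u''\le0$, $u'$ is nonincreasing, so $u'(0^+)=+\infty$ does not contradict $u'\le u'(x_0)$ --- when $u(0^+)>0$ the contradiction requires a second integration, as in section \ref{sec-s<-2,p<=0}; and $u'(0^+)=0$ cannot occur for a nontrivial solution ($u'\ge0$ nonincreasing with supremum $0$ forces $u$ constant), whereas the case you must actually worry about is $u(0^+)=0$ with $u'(0^+)=+\infty$, i.e.\ sublinear vanishing. More seriously, the only bound concavity supplies near the origin is the chord bound $u(x)\ge cx$, and two integrations of $-u''\ge c^{p}x^{\sigma+p}$ give a contradiction only when $\sigma+p\le-2$; the window $-2-\sigma<p\le-1-\sigma$ remains open --- it always contains $p=-1-\sigma$ and, for $-3<\sigma<-2$, exponents in $(0,1]$ as well --- and your proposed bootstrap toward $u\gtrsim x^{a}$ does not produce a contradiction there (in the relevant range with $p>1$ one has $a\ge1$, so that lower bound is weaker than the chord bound, and the sign of $c_a$ never enters a valid argument for a nonnegative solution). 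The paper handles $0<p<1$ by the sharper self-referential bound $u(s)^p\ge u(r)^p$ and an integration of $u'u^{-p}$ (section \ref{sec-n=1,s<=-2,0<p<=1}), and handles $1\le p\le-1-\sigma$ by the Kelvin transform, reducing to the known non-existence for $p\ge1$, $\widetilde\sigma\ge-2$ from \cite[Prop. A.1]{PS12} and \cite[Theo. 8.1]{MP01} (section \ref{sec-n=1,s<=-2,0<=p<=-1-s}). Without this duality, or a substitute of comparable strength, your outline does not establish the non-existence half of the theorem.
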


We summary the above existence/non-existence results described in Theorem \ref{thm-MAIN=n=1} in Table \ref{table-n=1} below. 

{\scriptsize
\begin{center}
\begin{longtable}{
>{\centering\arraybackslash}p{.14\textwidth}||
>{\centering\arraybackslash}p{.115\textwidth}|
>{\centering\arraybackslash}p{.115\textwidth}|
>{\centering\arraybackslash}p{.115\textwidth}|
>{\centering\arraybackslash}p{.115\textwidth}|
>{\centering\arraybackslash}p{.115\textwidth}|
>{\centering\arraybackslash}p{.115\textwidth}
}
\hline\rowcolor{LightCyan}
&
\multicolumn{2}{c|}{$p \leq 0 $}
	&\multicolumn{2}{c|}{$0<p <1 $}
	& $1 \leq p \leq -1-\sigma$	
	& $p>-1-\sigma$ \\
\hline
$\sigma < -2$ 
& \multicolumn{2}{c|}{\textbf{NO}} 
&\multicolumn{2}{c|}{ \textbf{NO} } 
& \textbf{NO} 
& \textbf{YES} \\
 & \multicolumn{2}{c|}{sect. \ref{sec-s<-2,p<=0}}
 & \multicolumn{2}{c|}{sect. \ref{sec-n=1,s<=-2,0<p<=1}} 
 & sect. \ref{sec-n=1,s<=-2,0<=p<=-1-s} 
 & $C|x|^\frac{2+\sigma}{1-p}$ \\	
\hline\rowcolor{LightCyan}
&
\multicolumn{2}{c|}{$p \leq 0 $}
	&\multicolumn{2}{c|}{$0<p <1 $}
	& \multicolumn{2}{c}{$p \geq 1 $} \\
\hline
$\sigma= -2$ 
& \multicolumn{2}{c|}{\textbf{NO}} 
&\multicolumn{2}{c|}{ \textbf{NO} } 
& \multicolumn{2}{c}{\textbf{NO}} \\
 & \multicolumn{2}{c|}{sect. \ref{sec-s<-2,p<=0}}
 & \multicolumn{2}{c|}{sect. \ref{sec-n=1,s<=-2,0<p<=1}} 
 & \multicolumn{2}{c}{\cite[Theo. 8.1]{MP01}, sect. \ref{sec-n=1,s=-2,p=1}} \\	 
\hline\rowcolor{LightCyan}
&
\multicolumn{2}{c|}{$p \leq 0 $}
	& $0< p < -1-\sigma $ 
	& $-1 - \sigma \leq p < 1$	
	& \multicolumn{2}{c}{$p \geq 1 $}	\\
\hline	
$-2 < \sigma <-1$ 
& \multicolumn{3}{c|}{\textbf{YES}} 
& \textbf{NO} 
& \multicolumn{2}{c}{\textbf{NO}}\\
& \multicolumn{3}{c|}{$C|x|^\frac{2+\sigma}{1-p}$} 
& sect. \ref{sec-n=1,-2<s<-1,-1-s<p<1} 
& \multicolumn{2}{c}{\cite[Prop. A.1]{PS12}}\\
\hline\rowcolor{LightCyan}
	& $p < -1 - \sigma $ 
	& $-1 - \sigma \leq p \leq 0$	
& \multicolumn{2}{c|}{$0< p <1 $}
&\multicolumn{2}{c}{$p \geq 1 $}	\\
\hline	
$\sigma \geq -1 $ 
& \textbf{YES} 
& \textbf{NO} 
& \multicolumn{2}{c|}{\textbf{NO}} 
 & \multicolumn{2}{c}{\textbf{NO}}\\
& $C|x|^\frac{2+\sigma}{1-p}$ 
& sect. \ref{sec-s<=-1,-1-s<=p<=0} 
& \multicolumn{2}{c|}{sect. \ref{sec-n=1,-1<=s,0<p<1}} 
& \multicolumn{2}{c}{\cite[Prop. A.1]{PS12}, sect. \ref{sec-n=1,-1<=s,0<p<1}}\\
\hline
\caption{Results for $- u'' (x)= |x|^\sigma u(x)^p$ in $(0,+\infty)$.}
\label{table-n=1}
\end{longtable}
\end{center}
}

\bigskip

\begin{remark}\label{rmk-PS12}
From Tables \ref{table-n=1} and \ref{table-n=1-R} above we have some remarks concerning the work \cite{PS12}.

\begin{enumerate}
 \item The equation considered in \cite{PS12} is in $(b,+\infty)$ for any $b \geq 0$ and solution is only assumed to be of $C^2$ in $(b,+\infty)$. Hence, solution in the sense of \cite{PS12} is not necessarily continuous up to $b$. This is very important for us to make use of duality which will be described below.
 
 \item Although the result in \cite{PS12} for the case $n=1$ is stated for $p>1$, it is quite clear that their argument actually works well for the case $p=1$.
\end{enumerate} 
\end{remark}
 
As mentioned in the preceding remark, in the proof of Theorems \ref{thm-MAIN=n=1-R} and \ref{thm-MAIN=n=1}, we make use some duality. To be more precise, if $u$ solves $$-u''(r) = r^\sigma u(r)^p$$ in $(0,+\infty)$, then the Kelvin type transformation $v : r \mapsto ru(1/r)$ solves 
$$
-v''(r) = r^{-p-\sigma - 3} v(v) ^p
$$
in $(0, +\infty)$. Using this, it is shown in section \ref{sec-n=1,-2<s<-1,-1-s<p<1} that the non-existence result in the case $\sigma \in (-2,-1)$ and $p \in (-1-\sigma, 1)$ follows from the non-existence result in the case $\sigma \leq -2$ and $0<p<1$ which is established earlier in section \ref{sec-n=1,s<=-2,0<p<=1}. Another example is that the non-existence result in the case $\sigma < -2$ and $1 \leq p \leq -1-\sigma$ studied in section \ref{sec-n=1,s<=-2,0<=p<=-1-s} can be obtained from the similar result in the case $\sigma \geq -2$ and $p \geq 1$.

Equations of the form \eqref{eqMAIN}$_\sigma$ with $p \ne 1$ and $\sigma \ne -2$ enjoy some invariant properties, namely, if $u$ solves \eqref{eqMAIN}$_\sigma$ then $$v(x) = \lambda^\frac{2+\sigma}{p-1} u(\lambda x)$$ also solves the same equation. This immediately tells us that in the existence regime the equation \eqref{eqMAIN}$_\sigma$ in $n\geq 2$ has many solutions. However, in the case $n=1$ and in the existence regime (either $\sigma <-2$ and $p>-1-\sigma$ or $\sigma>-2$ and $p<-1-\sigma$), our special solution $C |x|^\frac{2+\sigma}{1-p}$ for some $C>0$ does not create any new solution because
$$
\lambda^\frac{2+\sigma}{p-1} \big| \lambda x \big|^\frac{2+\sigma}{1-p} = |x|^\frac{2+\sigma}{1-p}.
$$ 
Hence, the invariant property applied to the above special solution gives us nothing. Nevetheless, it seems that uniqueness of solutions does not occur. For example, in the special case $\sigma = 1$ and $p=-4$, the equation admits the following two solutions
\begin{equation}\label{eqExample}
C_1 |x|^{3/5}, \quad C_2 |x|^{3/5} (3|x|+5)^{2/5}.
\end{equation}
Since uniqueness is a delicated issue, we relegate this to section \ref{5NU}.

The rest of paper is organized as follows. In section \ref{sec-pre}, we prove some important properties of solutions to \eqref{eqMAIN}$_\sigma$. Then we establish the non-existence results for the case $n \geq 2$ and $p \leq 0$ in section \ref{sec-n>=2,p<0} and section \ref{sec-n>=2,p=0}. In section \ref{sec-n=2,0<p<=1}, we consider the case $n=2$ and $0<p \leq 1$. In section \ref{sec-n=1,p<=0} we consider the case $n=1$ and $p \leq 0$ and in section \ref{sec-n=1,p>0} we consider the case $n=1$ and $p>0$. Finally, section \ref{5NU} is devoted to uniqueness of solutions.




\section{Preliminaries}
\label{sec-pre}

In what follows, the notation $B_r$ is always understood as the open ball $B_r(0)$ in $\R^n$ centered at the origin with radius $r$. The boundary of $B_r$ is denoted by $\partial B_r$. When the function $u$ is radially symmetric with respect to the origin, instead of writing $u(x)$, we also use the notation $u(r)$ with $r$ being $|x|$. 

Throughout the paper and in the case $n \geq 2$, we use the notation $\overline u (r)$ with $r \geq 0$ to denote the spherical average of $u$ centered at the origin on the sphere $\partial B_r$, namely $\overline u (0) = u(0)$ and
\[
\overline u (r) = \frac{1}{|\partial B_r|}\int_{\partial B_r} u d\sigma
\]
if $r>0$. (For simplicity, we shall omit $d\sigma$.) Spherical averaging $\overline u$ has some nice properties such as it, as a function of $r \geq 0$, is continuous provided $u$ is continuous and it enjoys the following identity
\[
\overline{\Delta u} = \Delta \overline u
\]
provided $u$ is of class $C^2$. 

In the following subsections, we establish some nice properties for non-negative solutions to \eqref{eqMAIN}$_\sigma$ in $\R^n$. Note that, throughout the paper, the symbol $C$ denotes a generic positive constant whose value could be different from one line to another.


\subsection{The case $n\geq 2$}

First we start with the case $n \geq 2$, arbitrary $\sigma \in \R$, and arbitrary $p \ne 0$. Although it is possible to give a combined proof of Lemmas \ref{lem-basic-n>=2} and \ref{lem-basic-n=1} below, we prefer to highlight the key difference between the sign of solutions and keep them separate.

\begin{lemma}\label{lem-basic-n>=2}
Assume that $n \geq 2$ and that $u \geq 0$ is any non-trivial solution to \eqref{eqMAIN}$_\sigma$ for arbitrary $\sigma$ and arbitrary $p \ne 0$. Then we have the following claims
\begin{enumerate}
 \item the functions 
$$r \mapsto \overline u (r) \quad \text{and} \quad r \mapsto r^{n-1} \overline u'(r)$$ 
are decreasing in $(0,+\infty)$;
 \item there hold
\[
r^{n-1} \overline u'(r) \leq 0 \quad \text{and} \quad \overline u (r) \leq u (0) 
\]	
for all $r > 0$;
 \item there holds
\[
u(0)>0.
\]
\end{enumerate}
\end{lemma}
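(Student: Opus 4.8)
The plan is to work with the spherical average $\overline u$, which satisfies a nice ODE, and to exploit that $u \geq 0$ is superharmonic. First I would record the equation for $\overline u$: applying the spherical averaging to \eqref{eqMAIN}$_\sigma$ and using $\overline{\Delta u} = \Delta \overline u$ together with the radial form of the Laplacian, I get, for $r > 0$,
\[
-\big(r^{n-1}\overline u'(r)\big)' = r^{n-1}\overline{|x|^\sigma u^p}(r) = r^{n-1+\sigma}\,\overline{u^p}(r) \geq 0,
\]
since $|x|^\sigma$ is radial and $u \geq 0$, $p \ne 0$ (so $u^p \geq 0$, interpreting $0^p = 0$ when $p>0$ and recalling $u>0$ a.e. where needed when $p<0$). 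Here one must be slightly careful about $\sigma < 0$, where the equation only holds away from the origin; but the averaging is done on spheres $\partial B_r$ with $r>0$, so this is fine, and $r^{n-1}\overline u'(r)\to 0$ as $r\to 0^+$ by the regularity of $u$ near the origin (continuity of $u$ and, for $\sigma\geq 0$, $C^2$; for $\sigma<0$ one integrates the ODE on $(\varepsilon, r)$ and passes to the limit using that $\overline u$ is bounded near $0$ and the right-hand side is integrable there provided $n-1+\sigma > -1$, i.e. $\sigma > -n$, which holds since $n\geq 2$... this integrability near the origin for very negative $\sigma$ is the one genuinely delicate point and I expect it to be the main obstacle; see below).

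Granting the identity above, claim (1) is immediate: the right-hand side being nonnegative means $r\mapsto r^{n-1}\overline u'(r)$ is non-increasing on $(0,\infty)$, and since it tends to $0$ as $r\to 0^+$ it is $\leq 0$ for all $r>0$, which also gives the first inequality in (2). Then $\overline u'(r) \leq 0$ for $r>0$, so $\overline u$ is non-increasing, giving the monotonicity in (1) and, combined with $\overline u(0) = u(0)$, the bound $\overline u(r) \leq u(0)$ in (2). (Strict decrease, if one wants it, follows from non-triviality: if $\overline u$ were constant on some interval then $\overline{u^p}\equiv 0$ there, forcing $u\equiv 0$ on an annulus, and then propagating via the ODE one gets $u\equiv 0$, a contradiction; but the statement only asks for "decreasing", so the weak monotonicity argument suffices unless the paper's convention makes "decreasing" mean strictly — in which case I'd insert this propagation argument.)

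For claim (3), suppose for contradiction $u(0) = 0$. Then $\overline u(0) = 0$ and, by (2), $0 \leq \overline u(r) \leq u(0) = 0$ for all $r \geq 0$, so $\overline u \equiv 0$. Since $u \geq 0$ and its spherical averages all vanish, $u \equiv 0$ on every sphere, hence $u \equiv 0$ on $\R^n$, contradicting non-triviality. So $u(0) > 0$.

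The hard part, as flagged, is justifying $r^{n-1}\overline u'(r)\to 0$ as $r\to 0^+$ and the integrability of the right-hand side near the origin when $\sigma$ is very negative. The clean way is: for $\sigma < 0$, fix $0 < \varepsilon < r$ and integrate the ODE identity on $(\varepsilon, r)$ to get $r^{n-1}\overline u'(r) - \varepsilon^{n-1}\overline u'(\varepsilon) = -\int_\varepsilon^r t^{n-1+\sigma}\overline{u^p}(t)\,dt$. Near $t=0$, $\overline{u^p}(t)$ stays bounded if $p>0$ (by continuity of $u$, $\overline{u^p}(0)=u(0)^p$), and then $\int_0^r t^{n-1+\sigma}\,dt<\infty$ iff $\sigma>-n$, automatic for $n\geq 2$ when $\sigma \in(-n,0)$; for $\sigma \leq -n$ the very non-existence we are heading toward means such solutions should not exist, so one may need to structure the argument so that this lemma is only invoked where it applies, or argue that $\overline u$ being superharmonic and bounded below already forces $\overline u' \to 0$ in a limiting sense. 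I would handle $p<0$ by first noting $u>0$ everywhere (which is part of what we are proving, so there is a mild circularity to untangle — in practice one runs the argument on $\R^n\setminus\{0\}$ first to get $u>0$ there, then extends), and $\varepsilon^{n-1}\overline u'(\varepsilon)$ is monotone in $\varepsilon$, hence has a limit $\ell \leq 0$; if $\ell<0$ then $\overline u'(\varepsilon)\sim \ell\varepsilon^{1-n}$ which is not integrable near $0$ (for $n\geq 2$), contradicting continuity/finiteness of $\overline u(0)$. This last observation — $\ell = 0$ forced by finiteness of $\overline u$ at the origin — is the robust argument and avoids any assumption on $\sigma$, so I would lead with it.
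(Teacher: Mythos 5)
Your overall strategy coincides with the paper's: average over spheres, observe that $r\mapsto r^{n-1}\overline u'(r)$ is non-increasing because the averaged right-hand side is non-negative, and then control its limit at the origin. But the decisive step is exactly the one you leave unproved. What the lemma needs is $L:=\lim_{r\searrow 0}r^{n-1}\overline u'(r)\le 0$: since $r^{n-1}\overline u'$ is non-increasing, $L$ is its supremum, and only $L\le 0$ yields $r^{n-1}\overline u'(r)\le 0$ for all $r>0$ and hence the monotonicity of $\overline u$. A priori, monotonicity alone gives $L\in(-\infty,+\infty]$, so $L$ could be positive or $+\infty$. In your closing ``robust argument'' you simply assert ``$\ell\le 0$'' and then devote the argument to ruling out $\ell<0$ --- which is the direction the lemma never uses. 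Your earlier justifications that the limit is $0$ cover only $\sigma\ge 0$ (the $C^2$ case) or $\sigma<0$ with $\sigma>-n$ and $\overline{u^p}$ bounded near the origin (which itself requires $p>0$, or $u(0)>0$, part of what is being proved). Your fallback of ``invoking the lemma only where it applies'' would not do either: the paper uses the lemma for every $\sigma$ and every $p$, e.g.\ in the case $n\ge 2$, $p\le 0$, $\sigma\le -n$, where both $r^{n-1}\overline u'\le 0$ and $u(0)>0$ are essential.

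The gap closes with precisely the tool you already use, just applied to the other sign. If $L>0$ (finite or $+\infty$), then by definition of the limit there is $r_1\in(0,1)$ with $\overline u'(r)\ge \tfrac L2\, r^{1-n}$ (or $\ge\tfrac12 r^{1-n}$ when $L=+\infty$) for $r\in(0,r_1]$; integrating over $(r,r_1)$ gives $\overline u(r_1)-\overline u(r)\ge \tfrac L2\int_r^{r_1}s^{1-n}\,ds$, and since $\int_0^{r_1}s^{1-n}\,ds$ diverges for $n\ge 2$ while $0\le \overline u\le \max_{\overline B_1}u<+\infty$ by continuity of $u$, letting $r\searrow 0$ gives a contradiction. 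This is the paper's Case 1/Case 2 argument; it requires no integrability of $r^{n-1+\sigma}\overline{u^p}$ near the origin and no restriction on $\sigma$ or $p$, so the delicate points you flag about $\sigma\le -n$ and about $p<0$ simply do not arise. (Ruling out a negative limit, the half you do prove, is correct but unnecessary; the paper only needs and only proves $L\le 0$.) The remainder of your proposal --- deducing claims (1)--(2) from $L\le 0$ plus monotonicity, and the argument that $u(0)=0$ would force $u\equiv 0$ --- matches the paper.
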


\begin{proof}
As $u \geq 0$ everywhere in $\R^n$, we claim that
$\Delta \overline u (r) = - \overline{|x|^\sigma u(r)^p } \leq 0$
in $[0,+\infty)$, which by the equality
\[
\Delta \overline u (r) = r^{1-n} \big(r^{n-1} \overline u'(r) \big)'
\]
for $r>0$ implies the decreasing of $r \mapsto r^{n-1} \overline u'(r)$ in $[0, +\infty)$. We note that we have not made any assumption on $p$ and $\sigma$.

To establish the monotonicity of $r\mapsto \overline u(r)$, we need more work and this requires $n \geq 2$. In view of the decreasing of $r \mapsto r^{n-1} \overline u'(r)$ there holds
\[
\lim_{r \searrow 0} r^{n-1}\overline{u}'(r) = L \in (-\infty, +\infty].
\]
Here $L$ could be $+\infty$. Our aim is to rule out the possibility of $L > 0$. 

\noindent\textbf{Case 1}. Suppose that $L \in (0, +\infty)$. Then for some $r_1 \in (0,1)$ we must have
\[
\overline{u}'(r) \geq \frac L2r^{1-n} 
\]
for all $r \in (0, r_1]$. Integrating both sides of the preceding inequality over $(r, r_1)$ gives
\[
\max_{\overline B_1} u \geq \overline{u}(r) \geq \overline{u}(r) - \overline{u}(r_1) \geq \frac L2 \int_r^{r_1} s^{1-n} ds
\]
for all $r \in (0, r_1]$. Keep in mind that $\max_{\overline B_1} u < +\infty$ thanks to the continuity of $u$. Hence, from the above argument and as $n \geq 2$ we easily obtain a contradiction if we let $r \searrow 0$ because the integral $\int_0^{r_1} s^{1-n} ds$ diverges.

\noindent\textbf{Case 2}. Suppose that $L = +\infty$. Then for some $r_1 \in (0,1)$ we must have
\[
\overline{u}'(r) \geq \frac 12r^{1-n} 
\]
for all $r \in (0, r_1]$. Repeating the above argument we also get a contradiction.

From the two cases above, we are in position to conclude that $L \leq 0$. This and the decreasing of $r \mapsto r^{n-1} \overline u'(r)$ in $[0, +\infty)$ imply that $\overline u'(r) \leq 0$ in $(0, +\infty)$. This tells us that the function $r \mapsto \overline u(r)$ is decreasing in $(0, +\infty)$.

Now we establish the inequality $\overline u (r) \leq u (0) $ for all $r \geq 0$. But this simply follows from the decreasing of $r \mapsto \overline u(r)$ in $(0, +\infty)$. Finally, one can easily conclude that $u(0) >0$. This is because otherwise the identity $u(0)=0$ and the decreasing of $r \mapsto \overline u(r)$ in $(0, +\infty)$ imply that $u$ must be trivial, which violates our hypothesis.
\end{proof}

\begin{remark}
We have the following two remarks:
\begin{enumerate}
 \item The condition $n \geq 2$ for the inequality $u(0) > 0$ in Lemma \ref{lem-basic-n>=2} is sharp in the sense that such an inequality is no longer true for $n=1$. This can be easily checked by seeing existence results in the case $n=1$; see Table \ref{table-n=1}. In sections \ref{sssec-n>=2,p<0,s>-2} and \ref{sssec-n>=2,p<0,s<=-2} below, we crucially use the condition $u(0)>0$.

 \item If $u > 0$ in $\R^n \setminus \{ 0 \}$, then actually one can show that the two functions $r \mapsto \overline u (r)$ and $r \mapsto r^{n-1} \overline u'(r)$ are strictly decreasing in $(0,+\infty)$. A typical example of this is when $u$ is a solution to \eqref{eqMAIN}$_\sigma$ with $p<0$.
\end{enumerate}
\end{remark}

Next we consider the case $p=0$. It is worth noting that in this particular case, we no longer require $u \geq 0$ in $\R^n$ as in Lemma \ref{lem-basic-n>=2} above.

\begin{lemma}\label{lem-basic-p=0}
Assume that $n \geq 2$, $\sigma \in \R$, and $u$ is any non-trivial solution to \eqref{eqMAIN}$_\sigma$ with $p=0$. Then we have the following claims
\begin{enumerate}
 \item the functions 
$$r \mapsto \overline u (r) \quad \text{and} \quad r \mapsto r^{n-1} \overline u'(r)$$ 
are strictly decreasing in $(0,+\infty)$;
 \item there hold
\[
r^{n-1} \overline u'(r) \leq 0 \quad \text{and} \quad \overline u (r) \leq u (0) 
\]	
for all $r > 0$;
 \item there holds
\[
u(0)>0.
\]
\end{enumerate}
\end{lemma}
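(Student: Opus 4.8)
The plan is to rerun the argument in the proof of Lemma~\ref{lem-basic-n>=2}, the point being that when $p=0$ the right-hand side of the equation no longer involves $u$, so the relevant steps become sign-free and, in fact, strict. First I would record that with $p=0$ the equation becomes $-\Delta u=|x|^\sigma$, holding pointwise in $\R^n\setminus\{0\}$ and --- when $\sigma\ge0$ --- at the origin as well. Since $|x|^\sigma$ is radial, its spherical average over $\partial B_r$ equals $r^\sigma$, so the identity $\overline{\Delta u}=\Delta\overline u$ gives $\Delta\overline u(r)=-r^\sigma$ for every $r>0$, with no hypothesis on the sign of $u$; this is precisely why the requirement $u\ge0$ of Lemma~\ref{lem-basic-n>=2} may be dispensed with here. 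Combining this with $\Delta\overline u(r)=r^{1-n}\big(r^{n-1}\overline u'(r)\big)'$ yields
\[
\big(r^{n-1}\overline u'(r)\big)'=-r^{\,n-1+\sigma}<0\qquad\text{for all }r>0,
\]
so $r\mapsto r^{n-1}\overline u'(r)$ is \emph{strictly} decreasing on $(0,+\infty)$.

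Next I would set $L:=\lim_{r\searrow0}r^{n-1}\overline u'(r)\in(-\infty,+\infty]$, which exists by the monotonicity just obtained, and rule out $L>0$ essentially as in Lemma~\ref{lem-basic-n>=2}, with a cosmetic change so as not to use the sign of $u$: if $L>0$ --- possibly $+\infty$ --- then $\overline u'(r)\ge c\,r^{1-n}$ on some $(0,r_1]$ with $c>0$, and integrating this over $(r,r_1)$ gives $\overline u(r)\le\overline u(r_1)-c\int_r^{r_1}s^{1-n}\,ds$, which tends to $-\infty$ as $r\searrow0$ because $\int_0^{r_1}s^{1-n}\,ds=+\infty$ for $n\ge2$; this contradicts the lower boundedness of $\overline u$ near the origin, guaranteed by the continuity of $u$ on $\overline B_1$. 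Hence $L\le0$, and strict monotonicity then gives $r^{n-1}\overline u'(r)<L\le0$, so $\overline u'(r)<0$, for every $r>0$; consequently $r\mapsto\overline u(r)$ is strictly decreasing on $(0,+\infty)$, which gives~(1) together with the inequality $r^{n-1}\overline u'(r)\le0$ in~(2). Finally, since $\overline u$ is continuous on $[0,+\infty)$ with $\overline u(0)=u(0)$, letting $s\searrow0$ in $\overline u(r)<\overline u(s)$ (valid for $0<s<r$) yields $\overline u(r)<u(0)$, completing~(2).

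For~(3) I would argue by contradiction, again as in Lemma~\ref{lem-basic-n>=2}: if $u(0)=\overline u(0)\le0$, then the strict decrease of $\overline u$ forces $\overline u(r)<0$, i.e.\ $\int_{\partial B_r}u<0$, for every $r>0$, which is incompatible with $u\ge0$; hence $u(0)>0$. I do not anticipate a genuine obstacle here --- the whole proof is a lighter rerun of that of Lemma~\ref{lem-basic-n>=2}. The points that need a little care are: (i) observing that for $p=0$ the averaged equation $\Delta\overline u=-r^\sigma$ holds independently of the sign of $u$; (ii) propagating the strict inequality through each implication, from $\big(r^{n-1}\overline u'\big)'<0$ to strict monotonicity of $r^{n-1}\overline u'$, then to $r^{n-1}\overline u'<L\le0$, then to $\overline u'<0$, and finally to strict monotonicity of $\overline u$; (iii) the routine regularity bookkeeping when $\sigma<0$, namely that $\overline u\in C^2(0,+\infty)\cap C([0,+\infty))$ with $\overline u(0)=u(0)$, so that $\Delta\overline u=-r^\sigma$ and the one-sided limits at the origin are all legitimate; and (iv) that, while parts~(1) and~(2) need no sign condition on $u$, part~(3) still relies on $u\ge0$, exactly as in the corresponding step of Lemma~\ref{lem-basic-n>=2}.
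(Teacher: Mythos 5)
Your proposal is correct and follows essentially the same route as the paper: average the equation to get $\big(r^{n-1}\overline u'(r)\big)'=-r^{\,n-1+\sigma}<0$, hence strict decrease of $r^{n-1}\overline u'$, rule out $L=\lim_{r\searrow0}r^{n-1}\overline u'(r)>0$ by integrating $\overline u'\ge c\,r^{1-n}$ and using the divergence of $\int_0 s^{1-n}\,ds$ for $n\ge2$ against the boundedness of $u$ on $\overline B_1$, and then propagate $L\le0$ to $\overline u'<0$; the remaining claims are exactly what the paper dismisses as "proved similarly" to Lemma \ref{lem-basic-n>=2}. Your explicit observation that part (3) still needs $u\ge0$ (while (1)–(2) do not) is a sound and in fact necessary refinement, since without a sign condition a solution such as $u(x)=-|x|^2/(2n)$ of $-\Delta u=1$ has $u(0)=0$, so your proof matches the paper's intended setting of non-negative solutions.
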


\begin{proof}
The argument provided below is essentially the same as that used in the case $p \ne 0$. Indeed, by taking spherical averages centered at the origin, we obtain from the equation the following
\begin{equation}\label{eq3}
- (r^{n-1} \overline u' (r))' = r^{n-1+\sigma} >0
\end{equation}
for $r >0$. This in particular implies that the mapping $r \mapsto r^{n-1} \overline u'(r)$ is strictly decreasing in $(0, +\infty)$. As in the proof of Lemma \ref{lem-basic-n>=2}, we obtain
\[
\lim_{r \searrow 0} r^{n-1}\overline{u}'(r) = L \in (-\infty, +\infty]
\]
for some $L$ but it could be $+\infty$. Our aim is to rule out the possibility of $L > 0$. 

\noindent\textbf{Case 1}. Suppose that $L \in (0, +\infty)$. Then for some small $r_1 \in (0,1)$ we must have
\[
\overline{u}'(r) \geq \frac L2r^{1-n} 
\]
for all $r \in (0, r_1]$. Integrating both sides of the preceding inequality over $(r, r_1)$ gives
\[
2\max_{\overline B_1} |u| \geq \overline{u}(r) - \overline{u}(r_1) \geq \frac L2 \int_r^{r_1} s^{1-n} ds
\]
for all $r \in (0, r_1]$. Keep in mind that $\max_{\overline B_1} |u| < +\infty$ thanks to the continuity of $u$. Hence, from the above argument and as $n \geq 2$ we easily obtain a contradiction if we let $r \searrow 0$ because the integral $\int_0^{r_1} s^{1-n} ds$ diverges.

\noindent\textbf{Case 2}. Suppose that $L = +\infty$. Then for some $r_1 \in (0,1)$ we must have
\[
\overline{u}'(r) \geq \frac 12r^{1-n} 
\]
for all $r \in (0, r_1]$. Repeating the above argument we also get a contradiction.

It follows from the two cases above that $L \leq 0$. Keep in mind that the mapping $r \mapsto r^{n-1} \overline u'(r)$ is strictly decreasing in $(0, +\infty)$. Hence, it is easy to conclude that
\[
r^{n-1}\overline{u}'(r) < 0
\]
for all $r>0$. Hence $\overline{u}'(r) < 0$ for all $r>0$, giving the monotonicity of $\overline u$ as claimed. The remaining claims can be proved similarly.
\end{proof}


\subsection{The case $n=1$}

Let $u \geq 0$ be non-negative, non-trivial solution to the equation \eqref{eqMAIN}$_\sigma$ in $(0, +\infty)$, namely \eqref{eqMAIN-n=1}. An analog of Lemma \ref{lem-basic-n>=2} is the following.

\begin{lemma}\label{lem-basic-n=1}
Assume that $u \geq 0$ is any non-trivial, $C^2$-solution to \eqref{eqMAIN}$_\sigma$ in $(0, +\infty)$ with arbitrary $p, \sigma \in \R$, not necessarily continuous up to $0$. Then we have the following claims:
\begin{enumerate}
 \item Respectively, the functions 
$$r \mapsto u (r) \quad \text{ and } \quad r \mapsto u'(r)$$ 
are increasing and decreasing in $(0,+\infty)$;
 \item there exist three positive constants $c$, $C$, and $r_1$ such that
\[
c \leq u(r) \leq Cr
\]
for all $r \geq r_1$.
\end{enumerate}
\end{lemma}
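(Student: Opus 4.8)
\emph{Plan of proof.} The whole statement rests on one simple observation: since $u\ge 0$ and $r^\sigma>0$ on $(0,+\infty)$, the equation rewrites as $u''(r)=-r^\sigma u(r)^p\le 0$, so $u$ is concave on $(0,+\infty)$ while being bounded below by $0$. Everything then reduces to elementary one‑variable arguments. First I would record that concavity is exactly the statement that $u'$ is (weakly) decreasing on $(0,+\infty)$, which already gives the monotonicity of $u'$ claimed in part~(1).

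Next I would prove that $u$ is increasing. Arguing by contradiction, if $u'(a)<0$ for some $a>0$, then the monotonicity of $u'$ forces $u'(r)\le u'(a)<0$ for all $r\ge a$, whence $u(r)\le u(a)+u'(a)(r-a)\to-\infty$ as $r\to+\infty$, contradicting $u\ge 0$. Hence $u'\ge 0$ on $(0,+\infty)$, i.e.\ $u$ is non‑decreasing. To upgrade this to strict monotonicity I would first show $u>0$ on all of $(0,+\infty)$: if $u(r_0)=0$ for some $r_0>0$, then $0\le u(r)\le u(r_0)=0$ for $r\le r_0$ (using $u$ non‑decreasing and $u\ge 0$), so $u\equiv 0$ on $(0,r_0]$, hence $u'(r_0)=0$ by continuity of $u'$, and then the decreasing, non‑negative function $u'$ must vanish identically on $(0,+\infty)$, making $u$ the trivial solution — a contradiction. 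Knowing $u>0$, if $u$ were constant on some subinterval $[r_1,r_2]$ we would get $u''\equiv 0$ and hence $r^\sigma u^p\equiv 0$ there, which is impossible; therefore $u$ is strictly increasing.

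For part~(2), non‑triviality and the monotonicity just established provide some $r_0>0$ with $u(r_0)>0$, and then $u(r)\ge u(r_0)=:c>0$ for all $r\ge r_0$, which is the lower bound. For the upper bound, the fact that $u'$ is decreasing gives, for every $r\ge r_0$,
\[
u(r)=u(r_0)+\int_{r_0}^{r}u'(t)\,dt\le u(r_0)+u'(r_0)(r-r_0)\le\Big(\frac{u(r_0)}{r_0}+u'(r_0)\Big)r,
\]
where the last inequality uses $u(r_0)\le (u(r_0)/r_0)\,r$ for $r\ge r_0$. Taking $C:=u(r_0)/r_0+u'(r_0)$ and $r_1:=r_0$ completes the proof.

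I do not expect any serious obstacle here; the only point requiring a little care is the strict monotonicity/positivity of $u$, where concavity, the sign of $u'$, and non‑triviality all have to be combined. It is also worth keeping in mind that for $p<0$ the equation is only meaningful where $u>0$, so the positivity statement is precisely what makes the hypotheses consistent, and the rest of the argument goes through uniformly for all $p,\sigma\in\R$ (and only on $(0,+\infty)$, no behaviour at $0$ being needed).
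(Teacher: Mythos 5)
Your proposal is correct and follows essentially the same route as the paper: concavity ($u''\le 0$) gives the monotonicity of $u'$, the sign $u'\ge 0$ is forced by $u\ge 0$ via the same linear-growth contradiction, and the bounds $c\le u(r)\le Cr$ come from non-triviality plus the tangent-line estimate, exactly as in the paper. Your extra paragraph on strict positivity and strict monotonicity goes slightly beyond what the lemma needs (the paper defers positivity to a separate maximum-principle lemma), but it is sound, including your remark that for $p<0$ positivity is part of the meaning of the equation.
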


\begin{proof}
As $u'' \leq 0$ in $(0, +\infty)$, we conclude that the function $r \mapsto u'(r)$ is decreasing in $(0, +\infty)$. This allows us to define
\[
\ell = \lim_{r \nearrow +\infty} u'(r) \in [-\infty, +\infty),
\]
here $\ell$ could be minus infinity. As $u \geq 0$ in $(0, +\infty)$ we necessarily have $\ell \geq 0$. Otherwise, $u' (r)$ becomes strictly negative for large $r$, which is not possible by seeing
\[
u(r) - u(r_0) = \int_{r_0}^r u'(s) ds \searrow -\infty.
\] 
Hence, there holds 
$$u' \geq 0 \quad \text{in }(0, +\infty),$$ 
namely $u$ is monotone increasing in $(0, +\infty)$. The monotonicity of $u$ tells us that there must exist some $c>0$ and some $r_2 > 0$ such that
\[
u(r) \geq c
\]
for all $r \geq r_2$. Otherwise, $u$ must vanish everywhere. To estimate $u$ from the above, we make use of the inequality $u'' \leq 0$. Indeed, this tells us that
\[
u'(r) \leq u'(r_2)
\]
and
\[
u(r) \leq u'(r_2) r + u(r_2) - u'(r_2) r_2
\]
for all $r \geq r_2$. In other words, $u$ grows at most linearly at infinity. Hence, there are $C>0$ and$r_1 \gg r_2$ such that
\[
u(r) \leq Cr
\]
for all $r \geq r_1$. This completes the proof.
\end{proof}

It is worth emphasizing that in Lemma \ref{lem-basic-n=1} we do not assume any condition on $\sigma$ and $p$. In addition, it is worth recalling that we do not assume any continuity up to the origin. We end this section by establishing a maximum principle type result.

\begin{lemma}\label{lem-MP-n=1}
Suppose that $u \geq 0$ is a non-trivial $C^2$-solution to the equation \eqref{eqMAIN}$_\sigma$ in $(0, +\infty)$. Then $u$ must be strictly positive everywhere.
\end{lemma}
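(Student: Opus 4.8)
The plan is to argue by contradiction, using only the monotonicity already recorded in Lemma \ref{lem-basic-n=1}. Suppose $u(r_0) = 0$ for some $r_0 > 0$. Since $r \mapsto u(r)$ is increasing on $(0,+\infty)$ and $u \ge 0$, this forces $u \equiv 0$ on $(0,r_0]$. Because $u$ is non-trivial it is positive somewhere, and by monotonicity it is then positive on a half-line; hence the zero set $Z = \{ r > 0 : u(r) = 0 \}$ is nonempty and bounded above. Put $r_* = \sup Z$. By continuity $u(r_*) = 0$, so $Z = (0, r_*]$ by monotonicity, and $u > 0$ on $(r_*, +\infty)$.

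Next, since $u \in C^2(0,+\infty)$ and $u' \equiv 0$ on the open interval $(0,r_*)$, continuity of $u'$ at $r_*$ gives $u'(r_*) = 0$. On $(r_*, +\infty)$ the right-hand side $r^\sigma u(r)^p$ is well defined and strictly positive; here it is useful that we have localized to the set where $u > 0$, so the sign of $p$ is immaterial, and in particular this sidesteps the fact that for $p \le 0$ the equation is only meaningful where $u$ is positive. Thus $u''(r) = - r^\sigma u(r)^p < 0$ on $(r_*, +\infty)$, and integrating from $r_*$ to any $r > r_*$ yields
\[
u'(r) = u'(r_*) + \int_{r_*}^{r} u''(s)\, ds = - \int_{r_*}^{r} s^\sigma u(s)^p\, ds < 0 .
\]
This contradicts the conclusion of Lemma \ref{lem-basic-n=1} that $u$ is increasing, whence $u' \ge 0$ on all of $(0,+\infty)$. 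Therefore $u$ cannot vanish at any point of $(0,+\infty)$, which is the assertion.

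I do not expect a genuine difficulty here. The only delicate point is that $-u'' = |x|^\sigma u^p$ makes sense a priori only where $u>0$ when $p \le 0$, and the argument is arranged precisely so that the equation is invoked only on $(r_*, +\infty)$. Everything rests on the monotonicity of $u$ and of $u'$ from Lemma \ref{lem-basic-n=1}, which hold for arbitrary $p, \sigma \in \R$, so no restriction on the parameters is needed.
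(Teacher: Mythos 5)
Your proof is correct and follows essentially the same route as the paper: propagate the zero down to an interval $(0,r_*]$ using the monotonicity from Lemma \ref{lem-basic-n=1}, note $u'(r_*)=0$, and then use the equation beyond $r_*$ to force $u'<0$ there, contradicting $u'\geq 0$. The only (harmless) difference is that by taking $r_*=\sup\{u=0\}$ you invoke the equation only where $u>0$ and thus treat all $p\in\R$ uniformly, whereas the paper dismisses $p<0$ ``by convention'' and splits the remaining argument into the cases $p>0$ and $p=0$.
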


\begin{proof}
Suppose that $u \geq 0$ is a non-trivial $C^2$-solution to the equation in $(0,+\infty)$. We claim that
\[
u(r) > 0 \quad \text{in } (0,+\infty).
\]
By convention, it suffices to consider the case $p \geq 0$. Indeed, by way of contradiction, assume that $u(2r_*)=0$ for some $r_* >0$. As $u$ is non-decreasing and $u \geq 0$, we must have $u(r) =0$ for all $0 < r \leq 2r_*$. In particular, we have
\[
0 = u'(r_*) \geq u'(r_*) - u'(r) = - \int_{r_*}^r u''(s) ds = \int_{r_*}^r s^\sigma u(s)^p ds 
\]
for all $r \geq r_*$. In the case $p>0$, as $u \geq 0$ is non-trivial, the integral $ \int_{r_*}^r s^\sigma u(s)^p ds >0$ for some large $r$. This is a contradiction. In the case $p=0$, the preceding estimate becomes
\[
0 = u'(r_*) \geq \int_{r_*}^r s^\sigma ds 
\]
for all $r \geq r_*$. Again, this is not true. Hence $u>0$ in $(0, +\infty)$ as claimed.
\end{proof}

We shall use the above maximum type principle in section \ref{sec-n=1,p>0} and section \ref{5NU}.
 

\section{The case $n \geq 2$}

This section is devoted to the proof of Theorem \ref{thm-MAIN}. As mentioned earlier, we are left with either $p \leq 0$ and $n \geq 2$ or $0<p \leq 1$ and $n=2$. For clarity, we divide our analysis into three parts as follows: in subsections \ref{sec-n>=2,p<0} and \ref{sec-n>=2,p=0}, we handle the case $p<0$ and $p=0$ respectively and in subsection \ref{sec-n=2,0<p<=1}, we treat the case $0<p \leq 1$ and $n=2$. 

By seeing Table \ref{table-1}, it is clear that we do not have any existence result in these three cases. Hence, we can argue by contradiction, namely throughout this section we always assume by way of contradiction that $u \geq 0$ is a solution to \eqref{eqMAIN}$_\sigma$ in $\R^n$ with $n \geq 2$.

\subsection{The case $n \geq 2$ and $p <0$}
\label{sec-n>=2,p<0}

By convention we have $u>0$ in $\R^n \setminus \{ 0\}$. Also, as the map $t \mapsto t^p $ is convex in $(0, +\infty)$ and one can apply Jensen's inequality to get
\[
\frac 1{|\partial B_r|}\int_{\partial B_r} |x|^\sigma u(x) ^p 
=\frac{r^\sigma}{|\partial B_r|} \int_{\partial B_r} u(x) ^p 
\geq r^\sigma \Big(\frac{1}{|\partial B_r|} \int_{\partial B_r} u(x) \Big) ^p 
\]
for $r>0$. Hence, by taking spherical averages our equation \eqref{eqMAIN}$_\sigma$ gives
\[
-\Delta \overline u (r) = \overline{|x|^\sigma u(r)^p } \geq r^\sigma \overline{u}(r)^p 
\]
for all $r > 0$, so that 
\[
-\Delta \overline u (r) \geq r^\sigma \overline{u}(r)^p \geq u(0)^p r^\sigma
\]
for all $r > 0$. Here, thanks to $p < 0$, we have made use of the inequality $\overline u (r) \leq u (0) $ established in Lemma \ref{lem-basic-n>=2}, to bound $\overline{u}(r)^p$ from the below. Thus, we have proved that
\begin{equation}\label{eq1}
	-\big(r^{n-1}\overline{u}'(r)\big)' \geq u(0)^p r^{n-1+\sigma} 
\end{equation}
for all $r >0$ and for all $p <0$. In fact, the estimate \eqref{eq1} still holds true when $p=0$. 

To tackle our equation, we consider the two cases $\sigma \geq -2$ and $\sigma <-2$ separately. Indeed, while in the case $\sigma > -2$, the asymptotic behavior of solutions far from the origin plays the essential role, we need to take care solutions near the origin in the case $\sigma \leq -2$. 


\subsubsection{The case $\sigma > -2$}
\label{sssec-n>=2,p<0,s>-2}

In this scenario, we have $n+\sigma >0$ and by Lemma \ref{lem-basic-n>=2} we know that $u(0)>0$ and $\overline{u}'(1) \leq 0$. Now integrating both sides of \eqref{eq1} over $[1,r]$ gives
	\begin{align*}
-r^{n-1}\overline{u}'(r) \geq \overline{u}'(1) -r^{n-1}\overline{u}'(r)
&\geq \frac{u(0)^p }{n+\sigma} \big( r^{n + \sigma} - 1\big)
	\end{align*}
for $r\geq 1$. Therefore,
\begin{equation}\label{for9}
	\overline{u}'(r) \leq - \frac{u(0)^p }{n+\sigma} r^{1 + \sigma} + \frac{u(0)^p }{n+\sigma} r^{1-n}
\end{equation}
for $r\geq 1$. Depending on the size of $n$, there are two cases:

\noindent\textbf{Case 1}. If $n > 2$ and $\sigma > -2$, then integrating \eqref{for9} over $[1,r]$ gives
	\begin{align*}
	\overline{u}(r)-\overline{u}(1) & \leq - \frac{u(0)^p }{n+\sigma} \frac{ r^{2+\sigma} - 1}{2+\sigma} + \frac{u(0)^p }{n+\sigma} \frac{r^{2-n} - 1}{2-n}	
	\end{align*}	
for $r\geq 1$. We have then $\overline{u}(r)\to -\infty$ as $r\to+\infty$ thanks to $u(0)>0$ and $n+\sigma \geq 2+\sigma>0$, which is contradiction with $u>0$ in $\R^n \setminus \{ 0\}$. 

\noindent\textbf{Case 1}. If $n =2$ and $\sigma > -2$, then integrating \eqref{for9} over $[1,r]$ gives
	\begin{align*}
	\overline{u}(r)-\overline{u}(1) & \leq - \frac{u(0)^p }{n+\sigma} \frac{ r^{2+\sigma} - 1}{2+\sigma} + \frac{u(0)^p }{n+\sigma} \log r
	\end{align*}	
for $r\geq 1$. Again, we still have $\overline{u}(r)\to -\infty$ as $r\to+\infty$ thanks to $u(0)>0$, $n+\sigma \geq 2+\sigma>0$, and $\log r =o( r^{2+\sigma})$ at infinity. This is still contradiction with $u>0$ in $\R^n \setminus \{ 0\}$. 

 
\subsubsection{The case $n \geq 2$ and $\sigma \leq -2$}
\label{sssec-n>=2,p<0,s<=-2}
 
Still by Lemma \ref{lem-basic-n>=2} we must have
\[
r^{n-1}\overline{u}'(r) \leq 0
\]
for all $r >0$. Integrating both sides of \eqref{eq1} over $(r_*, r)$ to get
\begin{align*}
- r^{n-1} \overline{u}'(r) \geq r_*^{n-1}\overline{u}'(r_*) - r^{n-1}\overline{u}'(r) \geq u(0)^p \int_{r_*}^r s^{n-1 + \sigma}ds.
\end{align*}
	
\noindent\textbf{Case 1}. Suppose $n+\sigma \leq 0$. We easily reach a contradiction if we fix $r>0$ and let $r_* \searrow 0$ because the integral $\int_0^r s^{n-1+\sigma} ds$ diverges. 

\noindent\textbf{Case 2}. Suppose $n+\sigma >0$. In this case, we should have
\begin{align*}
- r^{n-1} \overline{u}'(r) \geq \frac{u(0)^p }{n+\sigma} \big(r^{n + \sigma} - r_*^{n+\sigma} \big) 
\end{align*}
for all $r_* \in (0, r)$. Thanks to $n+\sigma>0$, by sending $r_* \searrow 0$ we get
\begin{align*}
- r^{n-1} \overline{u}'(r) \geq \frac{u(0)^p }{n+\sigma} r^{n + \sigma} 
\end{align*}
and this is true for all $r>0$. Hence, we have just shown that
	\begin{align*}
- \overline{u}'(r ) \geq \frac{u(0)^p }{n+\sigma} r^{1 + \sigma} 
\end{align*}
for all $r>0$. Keep in mind that $0 \leq \overline u(r) \leq u(0)$ for any $r \geq 0$, thanks to Lemma \ref{lem-basic-n>=2}. Integrating both sides over $(r, 1)$ gives
\begin{align*}
u(0) \geq \overline{u}(r ) - \overline{u}(1) \geq \frac{u(0)^p }{n+\sigma} \int_r^1 s^{1 + \sigma} ds 
\end{align*}
for all $r \in (0, 1)$. As $1+\sigma \leq -1$, $n+\sigma>0$, and $u(0)>0$, by letting $r \searrow 0$ we arrive at a contradiction because the integral $\int_0^1 s^{1+\sigma} ds$ diverges.

\begin{remark}
At first glance, one can notify that the condition $p < 0$ is not clearly mentioned in the proof above, however, it is worth noting that we actually make use of it because we have used \eqref{eq1} several times.
\end{remark}


\subsection{The case $n \geq 2$ and $p=0$}
\label{sec-n>=2,p=0}

In this particular case, we show that our equation \eqref{eqMAIN}$_\sigma$ in $\R^n$ with $p=0$ does not admit any solution $u \geq 0$. Note that in this case our equation becomes
\[
-\Delta u = |x|^\sigma \quad \text{ in } \R^n \setminus \{ 0\}
\]
The main difference between the two cases $p < 0$ and $p=0$ is that in the later case we no longer have the positivity of $u$ in $\R^n \setminus \{0\}$. Fortunately, thanks to the continuity of $u$, by replacing $u$ by $u + C$ for some sufficiently large constant $C>0$, if necessary, we can further assume that $u>0$ everywhere in $\overline B_1$. 


\subsubsection{The case $\sigma \leq -2$}

The argument provided below is essentially the same as that used in the case $p<0$. Our starting point is the following equation
\begin{equation}\label{eq3}
- (r^{n-1} \overline u' (r))' = r^{n-1+\sigma} >0
\end{equation}
for $r >0$, which is just \eqref{eq1} when $p=0$. Integrating both sides of \eqref{eq3} over $(r_*, r)$ with $0<r_*<r$ and making use of $r_*^{n-1}\overline{u}'(r_*) \leq 0$ to get
	\begin{align*}
- r^{n-1} \overline{u}'(r) \geq r_*^{n-1}\overline{u}'(r_*) - r^{n-1}\overline{u}'(r) \geq \int_{r_*}^r s^{n-1 + \sigma}ds.
	\end{align*}
Depending on the size of $n+\sigma$, we have two possible cases:
	
\noindent\textbf{Case 1}. Suppose $n+\sigma \leq 0$. Then we reach a contradiction if we fix $r>0$ and let $r_* \searrow 0$ because the integral $\int_0^r s^{n-1+\sigma} ds$ diverges. 

\noindent\textbf{Case 2}. Suppose $n+\sigma >0$. In this case, we should have
	\begin{align*}
- r^{n-1} \overline{u}'(r) \geq \frac{r^{n + \sigma} - r_*^{n+\sigma}}{n+\sigma} 
	\end{align*}
for all $r_* \in (0, r)$. Thanks to $n+\sigma>0$, by sending $r_* \searrow 0$ we get
	\begin{align*}
- r^{n-1} \overline{u}'(r) \geq \frac{r^{n + \sigma} }{n+\sigma} 
	\end{align*}
and in fact this is true for all $r>0$. Hence, we have just shown that
		\begin{align*}
- \overline{u}'(r ) \geq \frac{r^{1 + \sigma} }{n+\sigma} 
	\end{align*}
for all $r>0$. Now integrating both sides of the preceding inequality over $[r, 1]$ gives
\begin{align*}
2\max_{\overline B_1} |u| \geq \overline{u}(r ) - \overline{u}(1) \geq \frac{1 }{n+\sigma} \int_r^1 s^{1 + \sigma} ds 
\end{align*}
for all $r \in (0, 1)$. (It is not necessary to take absolute value of $u$ because we have already assumed at the beginning that $u>0$ in $\overline B_1$.) As $1+\sigma \leq -1$ and $n+\sigma>0$, by letting $r \searrow 0$ we arrive at a contradiction because the integral $\int_0^1 s^{1+\sigma} ds$ diverges. 

\begin{remark}
Notice that to derive contradiction in the case $\sigma \leq -2$ we only work near the origin. Therefore, we can further claim that in this particular case, namely $n \geq 2$ and $\sigma \leq -2$, our equation $-\Delta u = |x|^\sigma $ also does not admit any sign-changing solution.
\end{remark}


\subsubsection{The case $\sigma > -2$}

Recall from \eqref{eq3} that $\overline u$ solves
\[
-(r^{n-1} \overline u'(r))' = r^{n-1+\sigma} >0
\]
for $r>0$. Using this we can estimate
\[
 \overline u'(1) -r^{n-1} \overline u'(r) = \frac 1{n+\sigma} \big[ r^{n+\sigma} - 1 \big]
\]
for $r \geq 1$, which is equivalent to
\[
r^{1-n} \overline u'(1) - \overline u'(r) =- \frac 1{n+\sigma} r^{1-n} + \frac 1{n+\sigma} r^{1+\sigma} 
\]
for $r \geq 1$. Hence
\begin{equation}\label{eq-1}
\Big( \overline u'(1) + \frac 1{n+\sigma} \Big) \int_1^r s^{1-n} ds + \overline u(1) - \overline u(r) = \frac 1{(n+\sigma)(2+\sigma)} \big[ r^{2+\sigma} - 1 \big]
\end{equation}
for $r \geq 1$. Keep in mind that $\overline u \geq 0$ in $(0,+\infty)$.

\noindent\textbf{Case 1}. Suppose $n=2$. Then the estimate \eqref{eq-1} tells us that
\[
\Big( \overline u'(1) + \frac 1{2+\sigma} \Big) \log r+ \overline u(1) \geq \frac 1{(2+\sigma)^2} \big[ r^{2+\sigma} - 1 \big]
\]
for $r \geq 1$. From this and by sending $r \nearrow +\infty$ we obtain contradiction because $$\log r = o(r^{2+\sigma})$$ at infinity, thanks to $\sigma \geq -1$.

\noindent\textbf{Case 2}. Suppose $n \geq 3$. Then the estimate \eqref{eq-1} gives
\[
\Big( \overline u'(1) + \frac 1{2+\sigma} \Big) \frac{1 - r^{2-n} }{n-2}+ \overline u(1) \geq \frac 1{(2+\sigma)^2} \big[ r^{2+\sigma} - 1 \big]
\]
for $r \geq 1$. From this and by sending $r \nearrow +\infty$ we still obtain contradiction.


\subsection{The case $n =2$ and $0<p\leq 1$}
\label{sec-n=2,0<p<=1}

Let $u \geq 0$ solve \eqref{eqMAIN}$_\sigma$ in $\R^2$. We observe that in the case $\sigma \geq 0$ by definition any solution to \eqref{eqMAIN}$_\sigma$ in $\R^2$ is now of class $C^2(\R^2)$. Hence we simply make use of a Liouville type result for super-harmonic functions in two dimensions to obtain the non-existence result. Therefore, we limit ourselves to the case $\sigma<0$. For this reason, by making use of the maximum principle and as $u$ is non-trivial we deduce that $u>0$ in $\R^2 \setminus \{0\}$.

We note that in the case $0<p \leq 1$ the nonlinearity $u^p$ is concave, which makes the analysis through spherical averages more involved. To obtain the desired non-existence result, we follow the argument in \cite{SZ96, DQ20}. Depending on the size of $p$, we consider two cases: either $p \in (0,1)$ or $p=1$.


\subsubsection{The case $p=1$}

This part is easy to handle because after taking spherical averages centered at the origin, our equation gives
\[
-(r \overline u'(r))' = r^{1+\sigma} \overline u(r)
\]
in $(0,+\infty)$. Thanks to Lemma \ref{lem-basic-n>=2} we know that $r\overline u'(r) \leq 0$ for all $r>0$ and $\overline u$ is decreasing in $(0, +\infty)$. Hence, for arbitrary but small $\varepsilon \in (0,r)$, integrating the above differential inequality over $[\varepsilon,r]$ gives
\begin{align*}
-r\overline u'(r) \geq \varepsilon \overline u'(\varepsilon ) -r\overline u'(r)
& =- \int_\varepsilon^r (s\overline u'(s))'ds\\
&= \int_\varepsilon^r s^{1+\sigma} \overline u (s) ds
\geq \overline u \big( \frac r2 \big) \int_\varepsilon^{r/2} s^{1+\sigma} ds.
\end{align*}
Hence, if $\sigma \leq -2$, then we immediately obtain contradiction by letting $\varepsilon \searrow 0$ because the integral $ \int_0^{r/2} s^{1+\sigma} ds$ diverges. Now we consider the remaining case $\sigma \in (-2,0)$. In this case, we can let $\varepsilon \searrow 0$ to get
\[
-\overline u'(r) \geq r^{-1} \overline u \big( \frac r2 \big) \int_0^{r/2} s^{1+\sigma} ds
=\frac {r^{1+\sigma}}{(2+\sigma)2^{2+\sigma}} \overline u \big( \frac r2 \big) 
\] 
for any $r>0$. Integrating both sides over $[r,2r]$ with $r>0$ to obtain
\begin{align*}
\overline u(r) \geq \overline u(r) - \overline u(2r) &\geq \frac {1}{(2+\sigma)2^{2+\sigma}} \int_r^{2r} \overline u \big( \frac s2 \big) s^{1+\sigma} ds\\
&\geq \frac {2^{2+\sigma} -1 }{(2+\sigma)^2 2^{2+\sigma}} r^{2+\sigma} \overline u(r)
\end{align*}
for any $r>0$. From this we obtain contradiction by sending $r \nearrow +\infty$, thanks to the positivity of $\overline u$ and $\sigma>-2$.


\subsubsection{The case $0<p <1$}

For arbitrary $\delta \in (0,1)$ we consider the auxiliary function $u^\delta$. Clearly, $u^\delta$ solves
\[
-\Delta (u^\delta)=\delta (1-\delta) u^{\delta-2} |\nabla u|^2 + \delta |x|^\sigma u^{p-(1-\delta)}
\]
and following the argument in \cite{SZ96} we conclude that $$r \mapsto \overline{u^\delta}(r)$$ is monotone decreasing in $(0,+\infty)$. Now by taking spherical averages, there holds
\[
- \overline{u^\delta}'' - \frac 1r\overline{u^\delta}'=\delta (1-\delta) \overline{u^{\delta-2} |\nabla u|^2} + \delta r^\sigma \overline{u^{p-(1-\delta)}} 
\]
for all $r>0$. By H\"older's inequality, one has
\[
\overline{u^{\delta p}}^\frac{1}{\delta} 
\overline{u^\delta}^{1-\frac{1}{\delta}}
\leq \overline{u^{p-(1-\delta)}} ,
\]
which yields
\[
- \frac 1{\delta} \overline{u^\delta}^{\frac{1}{\delta}-1} \overline{u^\delta}'' - \frac 1r \big(\overline{u^\delta}^\frac{1}{\delta} \big)'
\geq (1-\delta) \overline{u^\delta}^{\frac{1}{\delta}-1} \overline{u^{\delta-2} |\nabla u|^2} + r^\sigma \overline{u^{\delta p}}^\frac{1}{\delta} 
\]
for all $r>0$. By Cauchy--Schwarz's inequality, we further obtain
\[
- \big(\overline{u^\delta}^\frac{1}{\delta} \big)'' - \frac 1r \big(\overline{u^\delta}^\frac{1}{\delta} \big)'
\geq r^\sigma \overline{u^{\delta p}}^\frac{1}{\delta} 
\]
for all $r>0$. Hence, by denoting 
\[
U = \overline{u^\delta}^\frac{1}{\delta}, \quad \phi =\overline{u^{\delta p}}^\frac{1}{\delta} ,
\]
we arrive at
\[
-U''(r) - \frac 1rU'(r) \geq r^\sigma \phi (r)
\]
for all $r>0$. Observe the following inequalities
\[
U'(r) = \frac 1{\delta} \overline{u^\delta}^{\frac{1}{\delta}-1}\big( \overline{u^\delta} \big)' \leq 0, \quad
\phi'(r) = \frac 1{\delta} \overline{u^{\delta p}}^{1-\frac{1}{\delta}} \big( \overline{u^{\delta p}} \big)' \leq 0.
\]
Hence, integrating over $[\varepsilon,r]$ gives
\begin{align*}
-U'(r) \geq r^{-1} \int_\varepsilon^r s^{1+\sigma} \phi (s) ds
\geq r^{-1} \phi \big( \frac r2 \big) \int_\varepsilon^{r/2} s^{1+\sigma} ds.
\end{align*}
As in the case $p=1$, if $\sigma \leq -2$, then we immediately obtain contradiction by letting $\varepsilon \searrow 0$. Now we consider the remaining case $\sigma \in (-2,0)$. In this case, we can let $\varepsilon \searrow 0$ to get
\begin{align*}
-U'(r) \geq r^{-1} \phi \big( \frac r2 \big) \int_0^{r/2} s^{1+\sigma} ds
=\frac {r^{1+\sigma}}{(2+\sigma)2^{2+\sigma}} \phi \big( \frac r2 \big) 
\end{align*}
for any $r>0$. Integrating both sides over $[r,2r]$ with $r>0$ to obtain
\begin{align*}
U(r) \geq U(r) - U(2r)& \geq \frac {1}{(2+\sigma)2^{2+\sigma}} \int_r^{2r} s^{1+\sigma}\phi \big( \frac s2 \big) ds\\
&\geq \frac {2^{2+\sigma} -1 }{(2+\sigma)^2 2^{2+\sigma}} r^{2+\sigma}\phi (r) = C r^{2+\sigma}\phi (r) 
\end{align*}
for any $r>0$. Hence, we have just shown that
\begin{align*}
 \overline{u^\delta} (r) \geq C^\delta r^{(2+\sigma)\delta} \overline{u^{\delta p}} (r)
\end{align*}
for any $r>0$. As $\overline u (r) \leq u(0)$ and by H\"older's inequality, we easily get
\[
\overline{u^\delta} (r) \leq \big( \overline u(r) \big)^{1-\frac{1-\delta}{1-\delta p}}
\big( \overline{u^{\delta p}}(r) \big)^{\frac{1-\delta}{1-\delta p}}
\leq u(0)^{1-\frac{1-\delta}{1-\delta p}}
\big( \overline{u^{\delta p}}(r) \big)^{\frac{1-\delta}{1-\delta p}}.
\]
This helps us to estimate
\begin{equation}\label{eq-Key}
\overline{u^\delta} (r) \leq u(0) C^{\delta - 1} r^{-\frac{1-\delta}{1-p}(2+\sigma)}
\end{equation}
for any $r>0$. We assume for a moment that $u$ enjoys the following simple comparison
\begin{equation}\label{eq-KeyGrow}
u(x) \geq \frac 1{|x|}
\end{equation}
for all $|x| \geq 1$. Having this in hand, we easily conclude that
\[
\overline{u^\delta} (r) \geq r^{-\delta}
\]
for all $r \geq 1$. Thus, to fulfill \eqref{eq-Key}, we must have
\[
\delta \geq \frac{1-\delta}{1-p}(2+\sigma).
\]
But this is impossible if we let $\delta \in (0,1)$ sufficiently small. Finally, we prove \eqref{eq-KeyGrow}. Let
\[
v(x) = \frac c{|x|}
\]
in $\R^2 \setminus \{0\}$ with $c = \min_{|x|=1} u(x) > 0$. This choice of $v$ guarantees
\[
u - v \geq 0
\]
on $\{ x : |x|=1\}$. In $\R^2$ we know that
\[
\Delta v = \frac 1{|x|^3} > 0
\]
everywhere in $\R^2 \setminus \{ 0\}$. As $u \geq 0$, we conclude that
\[
\liminf_{|x| \nearrow +\infty} (u - v)(x) \geq 0
\]
and 
\[
\Delta (u- v) \leq 0
\]
everywhere in the region $\{x : |x| > 1 \} $. Hence by the maximum principle, there holds
\[
u - v \geq 0
\]
everywhere in $\{x : |x| > 1 \} $, and this is exactly \eqref{eq-KeyGrow}.

\begin{remark}
The estimate \eqref{eq-KeyGrow} is valid for any super-harmonic functions in $\R^2$. In fact, one can replace $|x|^{-1}$ by any $|x|^{-\alpha}$ with $\alpha > 0$ because $\Delta (|x|^{-\alpha}) = \alpha^2 |x|^{-\alpha-2}$.
\end{remark}


\section{The case $n = 1$}

In this section, we prove Theorem \ref{thm-MAIN=n=1-R} and Theorem \ref{thm-MAIN=n=1}. To this purpose, we first spend sections \ref{sec-n=1,p<=0} and \ref{sec-n=1,p>0} to prove Theorem \ref{thm-MAIN=n=1}. Then in section \ref{ssect-thm-n=1-R} we show how to use Theorem \ref{thm-MAIN=n=1} to prove Theorem \ref{thm-MAIN=n=1-R}. 

We consider the two cases $p \leq 0$ and $p > 0$ separately.

\subsection{Proof of Theorem \ref{thm-MAIN=n=1}: the case $p \leq 0$}
\label{sec-n=1,p<=0}

By seeing Table \ref{table-n=1}, we need to consider the two cases: either $\sigma \leq -2$ or $\sigma \geq -1$ and $-1-\sigma \leq p \leq 0$.


\subsubsection{The case $\sigma \geq -1$ and $-1-\sigma \leq p \leq 0$}
\label{sec-s<=-1,-1-s<=p<=0}

Clearly $p+\sigma + 1 \geq 0$. We treat the two cases $p+\sigma + 1 > 0$ and $p+\sigma + 1 = 0$ separately.

\noindent\textbf{Case 1}. Suppose $p+\sigma + 1 > 0$. Recall from Lemma \ref{lem-basic-n=1} the estimate $u(r) \leq Cr$ for all $r \geq r_1>0$. This, the non-positivity of $p$, and the equation imply that
\[
-u''(r) = |r|^\sigma u(r)^p \geq C r^{p+\sigma}
\]
holds for all $r \geq r_1$. Keep in mind that $u' \geq 0$ in $(0, +\infty)$, which plays an important role. By integrating both sides of the preceding inequality over $[r_1, r]$, we easily obtain
\begin{equation}\label{eq-n=1,-1-s<=p<=0,s>=-1}
u'(r_1) \geq u'(r_1) -u'(r) \geq C \int_{r_1}^r s^{p+\sigma}ds = \frac C{p+\sigma+1} \big[ r^{p+\sigma+1} - r_1^{p+\sigma+1} \big]
\end{equation}
for all $r \geq r_1$. From this we obtain contradiction by letting $r \nearrow +\infty$ as the term $r^{p+\sigma+1}$ is unbounded, thanks to $p+\sigma+1>0$.

\noindent\textbf{Case 2}. Suppose $p+\sigma+1 = 0$. Then as above we can estimate
\[
-u''(r) = |r|^\sigma u(r)^{-1-\sigma} \geq C r^{-1}
\]
holds for all $r \geq r_1$. Integrating both sides over $[r_1, r]$ and still by the non-negativity of $u'$ we obtain
\[
u'(r_1) \geq u'(r_1) -u'(r) \geq C \int_{r_1}^r s^{-1} ds = C \big[ \log r - \log (r_1) \big].
\]
From this we obtain contradiction by letting $r \nearrow +\infty$ as the term $\log r$ is unbounded.

\begin{remark}
Obviously, the above argument does not work if $p<-1 - \sigma$ because in this case we always have $1+\sigma + p<0$. Hence the right hand side of \eqref{eq-n=1,-1-s<=p<=0,s>=-1} provides us nothing. In fact, our equation admits solutions in the case $p<-1-\sigma$ and $\sigma >-2$. Besides, the condition $\sigma \geq -1$ is implicitly used to guarantee the existence of $p \in [-1-\sigma, 0]$. 
\end{remark}


\subsubsection{The case $\sigma \leq -2$ and $p \leq 0$}
\label{sec-s<-2,p<=0}

Recall that $u$ is increasing in $(0,+\infty)$, hence $u^p$ is decreasing in $(0, +\infty)$. Therefore, from the equation we can estimate
\[
u'(r) \geq u'(r) - u'(1) = - \int_r^1 u''(s) ds = \int_r^1 s^\sigma u(s)^p ds 
\geq u(1)^p \int_r^1 s^\sigma ds 
\]
for all $r \in (0,1]$, which then implies
\[
u'(r) \geq \frac { u(1)^p }{1+\sigma} \big( 1 - r^{1+\sigma} \big)
\]
for all $r \in (0,1]$. Integrating both sides over $[r,1]$ gives
\begin{align*}
u(1) - u(r) &= \int_r^1 u'(s) ds \geq \frac { u(1)^p }{1+\sigma} \int_r^1 (1- s^{1+\sigma}) ds.
\end{align*}
Depending on the size of $\sigma$, we have two possible cases.

\noindent\textbf{Case 1}. Suppose $\sigma<-2$. In this can we compute the integral $\int_r^1$ to get
\begin{equation}\label{eq-int-1}
\int_r^1 (1- s^{1+\sigma}) ds
= \frac 1{2+\sigma}r^{2+\sigma} - r +\frac {1+\sigma}{2+\sigma} ,
\end{equation}
which then implies
\begin{align*}
-u(1) & \leq u(r) - u(1) 
\leq - \frac { u(1)^p }{(1+\sigma)(2+\sigma)}r^{2+\sigma} + \frac { u(1)^p }{1+\sigma} r - \frac { u(1)^p }{2+\sigma}
\end{align*}
Letting $r \searrow 0$ we obtain contradiction because $r^{2+\sigma} \nearrow +\infty$, thanks to $\sigma < -2$.

\noindent\textbf{Case 2}. Suppose $\sigma=-2$. In this case we notice that
\begin{equation}\label{eq-int-2}
\int_r^1 (1- s^{-1}) ds
=\log r - r + 1 ,
\end{equation}
which then implies
\begin{align*}
u(1) & \geq u(1) - u(r) \geq u(1)^p \big( -\log r + r - 1 \big).
\end{align*}
Letting $r \searrow 0$ we obtain contradiction because $\log r\searrow -\infty$.


\subsection{Proof of Theorem \ref{thm-MAIN=n=1}: the case $p>0$}
\label{sec-n=1,p>0}

Seeing Table \ref{table-n=1}, in this case, we only have existence result if either $\sigma < -2$ and $p>-1-\sigma$ or $\sigma \in (-2,-1)$ and $0<p<-1-\sigma$. Let $u$ be a non-negative $C^2$-solution. Thanks to Lemma \ref{lem-MP-n=1}, we know that $u>0$ everywhere in $(0, +\infty)$.


\subsubsection{The case $n=1$, $\sigma \leq -2$, and $0< p < 1$}
\label{sec-n=1,s<=-2,0<p<=1}

Thanks to Lemma \ref{lem-basic-n=1}, the function $u$ is increasing in $(0,+\infty)$, so is the function $u^p$ because $p>0$. In view of the estimate above, we easily get
\[
u'(r) \geq u'(r) - u'(1) = - \int_r^1 u''(s) ds = \int_r^1 s^\sigma u(s)^p ds 
\geq u(r)^p \int_r^1 s^\sigma ds 
\]
for all $r \in (0,1]$. As $u>0$ in $(0, +\infty)$, the above estimate leads us to
\[
u'(r) u(r)^{-p} \geq \frac { 1 - r^{1+\sigma} }{1+\sigma} \geq 0
\]
for all $r \in (0,1]$. Integrating both sides over $[r,1]$ gives
\begin{align*}
\frac 1{1-p} \big( u(1)^{1-p} - u(r)^{1-p} \big) &= \int_r^1 u'(s) u(s)^{-p} ds \geq \frac 1{1+\sigma} \int_r^1 (1- s^{1+\sigma}) ds 
\end{align*}
for all $r \in (0,1]$. Depending on the size of $\sigma$, we have two possible cases.

\noindent\textbf{Case 1}. Suppose $\sigma<-2$. In this case we can compute the integral $\int_r^1$ as in \eqref{eq-int-1} to get
\begin{align*}
-u(1)^{1-p}& \leq u(r)^{1-p} - u(1)^{1-p} 
\leq - \frac {1-p}{(1+\sigma)(2+\sigma)}r^{2+\sigma} + \frac {1-p}{1+\sigma} r - \frac {1-p}{2+\sigma}
\end{align*}
Letting $r \searrow 0$ we obtain contradiction because $r^{2+\sigma} \nearrow +\infty$, thanks to $\sigma < -2$.

\noindent\textbf{Case 2}. Suppose $\sigma=-2$. With help of \eqref{eq-int-2} we arrive at
\begin{align*}
-u(1)^{1-p}& \leq u(r)^{1-p} - u(1)^{1-p} \leq \log r - r + 1.
\end{align*}
Letting $r \searrow 0$ we obtain contradiction because $\log r\searrow -\infty$.


\subsubsection{The case $n=1$, $\sigma < -2$, and $1 \leq p \leq -1-\sigma$}
\label{sec-n=1,s<=-2,0<=p<=-1-s}

Assume that $u$ solves 
$$-u''(r) = r^\sigma u(r)^p \quad \text{in } (0,+\infty)$$ 
with $\sigma < -2$ and $1 \leq p \leq -1-\sigma$. By a simple calculation, it is not hard to verify that
\begin{equation}\label{eq-Kelvin}
- \Big(r u \big(\frac 1r \big) \Big)'' = - \frac 1{r^3} u'' \big(\frac 1r \big) = r^{-\sigma -3} u\big(\frac 1r \big)^p = r^{-p-\sigma -3} \Big(r u \big(\frac 1r \big) \Big)^p 
\end{equation}
for all $r>0$. Hence, the Kelvin type transformation $r \mapsto ru(1/r)$ solves 
\begin{equation}\label{eq-n=1,s<=-2,0<=p<=-1-s}
-u''(r) = r^{\widetilde \sigma} u^p
\end{equation}
in $(0, +\infty)$ with $\widetilde \sigma = -p-\sigma - 3 > -2$. By seeing Table \ref{table-n=1} and Remark \ref{rmk-PS12}, we quickly conclude that there is no solution to \eqref{eq-n=1,s<=-2,0<=p<=-1-s} in the case $\widetilde\sigma > -2$ and $p \geq 1$. This completes the present proof. 

\begin{remark}\label{rmk-n=1,s<=-2,0<=p<=-1-s}
It is worth noting that if we repeat the above argument but $p>-1-\sigma$ then we arrive at $\widetilde \sigma<-2$. Hence the transformed equation \eqref{eq-n=1,s<=-2,0<=p<=-1-s} and our equation are the same, which gives us nothing. In fact, in the case $\sigma < -2$ and $p>-1-\sigma$, there is one solution.
\end{remark}


In the next two subsections, we mainly concentrate on the case $-2 \leq \sigma <-1$. 

\subsubsection{The case $n=1$, $\sigma = -2$, and $p=1$}
\label{sec-n=1,s=-2,p=1}

In this special case, $u$ solves the Cauchy--Euler equation
$$ r^2 u''(r) + u(r) = 0 \quad \text{in } (0,+\infty).$$ 
The above equation can be solved analytically, and it is easy to verify that the general solution to the above Cauchy--Euler equation is of the form
\[
c_1 |x|^{1/2} \sin \Big( \frac{\sqrt 3}{2}\log |x| \Big) 
+c_2 |x|^{1/2} \cos \Big( \frac{\sqrt 3}{2}\log |x| \Big) ,
\]
where $c_1$ and $c_2$ are constants. Clearly, the general solution oscillates and this concludes that there is no non-negative solution $u$ to our PDE.

\begin{remark}
From the general solution to the above Cauchy--Euler equation, one can construct the following sign-changing solution, in the sense of \eqref{eqClassicalSolutionClass}, to our PDE
\[
u(x)=\left\{
\begin{aligned}
|x|^{1/2} \sin \Big( \frac{\sqrt 3}{2}\log |x| \Big) & & \text{if } x \ne 0,\\
0 & & \text{if } x=0,
\end{aligned}
\right.
\]
in the whole space $\R$. In other words, although our PDE does not have any non-negative solution, it does have sign-changing solutions.
\end{remark}


\subsubsection{The case $n=1$, $-2 < \sigma <-1$, and $-1-\sigma \leq p < 1$}
\label{sec-n=1,-2<s<-1,-1-s<p<1}

We argue as in section \ref{sec-n=1,s<=-2,0<=p<=-1-s} above. Indeed, as in \eqref{eq-Kelvin}, the Kelvin type transformation $r \mapsto ru(1/r)$ solves \eqref{eq-n=1,s<=-2,0<=p<=-1-s}, namely
\[
-u''(r) = r^{\widetilde \sigma} u^p
\]
in $(0, +\infty)$. In this case, there hold $\widetilde \sigma = -p-\sigma - 3 \leq -2$ and $p<1$. However, by seeing Table \ref{table-n=1}, we quickly conclude that the above admits no solution in the case $\widetilde\sigma \leq -2$ and $p<1$. This completes the proof. 

\begin{remark}
As in Remark \ref{rmk-n=1,s<=-2,0<=p<=-1-s} above, the duality argument does not work if $p \notin [-1-\sigma, 1)$.
\end{remark}



In the last part, we focus on the case $\sigma \geq -1$. 

\subsubsection{The case $n=1$, $\sigma \geq -1$, and $p>0$}
\label{sec-n=1,-1<=s,0<p<1}

We now handle the case $\sigma \geq -1$. It is worth noting that our argument below works for any $p>0$, hence providing an alternative approach to the result in \cite{PS12}. Recall from Lemma \ref{lem-basic-n=1} the estimate $u(r) \geq c$ for all $r \geq r_1$. Keep in mind that $p > 0$. Hence it follows from the equation that
\[
-u''(r) = r^\sigma u(r)^p \geq c^p r^\sigma
\]
holds for all $r \geq r_1$. By integrating both sides over $[r_1, r]$, we easily obtain
\[
u'(r_1) -u'(r) = -\int_{r_1}^r u''(s) ds \geq c^p \int_{r_1}^r s^\sigma ds
\]
for all $r \geq r_1$. We have two cases.

\noindent\textbf{Case 1}. Suppose $\sigma>-1$. Keep in mind that $u' \geq 0$ in $(0,+\infty)$. Hence the above calculation becomes
\begin{equation*}\label{eq-n=1,p<=0}
u'(r_1) \geq u'(r_1) -u'(r) = -\int_{r_1}^r u''(s) ds \geq \frac {c^p} {\sigma+1} \big[ r^{\sigma+1} - r_1^{\sigma+1} \big]
\end{equation*}
for all $r \geq r_1$. From this we obtain contradiction by letting $r \nearrow +\infty$ as the term $r^{\sigma+1}$ is unbounded, thanks to $\sigma+1>0$.

\noindent\textbf{Case 2}. Suppose $\sigma=-1$. Then, the above calculation becomes
\begin{equation*}\label{eq-n=1,p<=0}
u'(r_1) \geq u'(r_1) -u'(r) = -\int_{r_1}^r u''(s) ds \geq c^p \big[ \log r - \log (r_1) \big]
\end{equation*}
for all $r \geq r_1$. From this we obtain contradiction by letting $r \nearrow +\infty$ as the term $\log r$ is also unbounded.


\subsection{Proof of Theorem \ref{thm-MAIN=n=1-R}}
\label{ssect-thm-n=1-R}

We now prove Theorem \ref{thm-MAIN=n=1-R}. As any solution to the equation \eqref{eqMAIN-n=1-R} in $\R$ is also a solution to \eqref{eqMAIN-n=1} in $(0,+\infty)$. Hence, non-existence results in Theorem \ref{thm-MAIN=n=1-R} follows from non-existence results in Theorem \ref{thm-MAIN=n=1}. Besides, existence results in Theorem \ref{thm-MAIN=n=1-R} also follows from existence results in Theorem \ref{thm-MAIN=n=1}. Hence, we are left with the case $\sigma \geq 0$ and $p<-1-\sigma$, and seeing Table \ref{table-n=1-R}, we have a non-existence result in this regime. In fact, the argument below works for any $p \in \R$. By way of contradiction, assume that a solution $u \geq 0$ to $-u''(x) = |x|^\sigma u(x)^p$ in $\R$, in the sense of \eqref{eqClassicalSolutionClass}, exists. As $\sigma \geq 0$, $u$ is of class $C^2(\R)$. Hence $u$ is concave and bounded from below by $0$. It is elementary to verify that $u$ must be constant which must also be zero. In other words, we obtain a non-existence result in the regime $\sigma \geq 0$.

\begin{remark}
Although the function $C|x|^\frac{2+\sigma}{1-p}$ verifies \eqref{eqMAIN-n=1-R} in the regime $\sigma > -2$ and $p<-1-\sigma$, it is not a solution to \eqref{eqMAIN-n=1-R} in the regime $\sigma \geq 0$ and $p<-1-\sigma$ because it is not $C^1$ at the origin, thanks to $(2+\sigma)/(1-p) \in (0,1)$.
\end{remark}


\section{Non uniqueness of a positive solution when $n=1$} \label{5NU}

In this section, we are interested in the uniqueness issue of a non-negative, non-trivial solution to the one-dimensional equation
\begin{equation} \label{5E1}
	u''(x) + x^\sigma u^p (x) = 0 \quad \text{for}\ x > 0.
\end{equation}
Thanks to Lemma \ref{lem-MP-n=1}, any non-negative solution to \eqref{5E1} is indeed strictly positive. Hence, from now on, by a solution to \eqref{5E1} we mean a strictly positive $C^2$-function in $(0, +\infty)$. Seeing Theorem \ref{thm-MAIN=n=1}, we shall discuss the following two cases
\begin{enumerate} 
[label=(\Alph*)]
\item \label{5C1} $\sigma>-2$ and $p<-1-\sigma$; 
\item \label{5C2} $\sigma<-2$ and $p>-1-\sigma$.
\end{enumerate}
As already observed in the Introduction, for $p \ne 1$ and $\sigma \ne -2$, there is a solution $u_a$ to \eqref{5E1} of the form
\[
u_a(x) = c_a x^a
\]
with
\begin{align*}
	a = \frac{\sigma+2}{1-p} \quad
	c_a &:= \left(a(1-a)\right)^{1/(p-1)}.
\end{align*}
The two conditions \ref{5C1} and \ref{5C2} are nothing but to guarantee $a \in (0,1)$. Apparently, the solution $u_a$ is continuous up to $x=0$. Hence, a natural question is that \textit{given $a \in (0,1)$, is there any solution $u$ to \eqref{5E1} in $(0,+\infty)$ rather than $u_a$, which is continuous up to $x=0$}? 

By a simple phase plane analysis, see section \ref{5NU6}, it turns out that the equation \eqref{5E1} admits other solutions rather than $u_a$. More precisely, for the case \ref{5C1}, our non-uniqueness result reads as follows.

\begin{theorem} \label{5T1}
Assume \ref{5C1}.
 There exists a one-parameter family of positive, classical solutions to \eqref{5E1}, denoted by $\{u^\alpha\}_{\alpha>0}$, such that
\begin{align*} 
\lim_{x\searrow 0} \frac{u^\alpha(x)}{ u_a(x)} = 1
\end{align*}
and if $\alpha_1 > \alpha_2 > 0$, then
\begin{align*} 
u^{\alpha_1}(x) > u^{\alpha_2}(x) > u_a(x) 
	\quad \text{for any}\ x > 0.
\end{align*}
\end{theorem}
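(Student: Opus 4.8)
The plan is to study \eqref{5E1} via the standard phase-plane/ODE approach, constructing the family $\{u^\alpha\}_{\alpha>0}$ as solutions of an initial-value problem built around the explicit solution $u_a$. First I would substitute $u(x) = c_a x^a w(t)$ with the logarithmic change of variable $t = \log x$, which (using $a = (\sigma+2)/(1-p)$ and $c_a = (a(1-a))^{1/(p-1)}$) converts \eqref{5E1} into an autonomous second-order equation for $w(t)$, of the form $w'' + (\text{linear first-order term}) + a(1-a)\big(w^p - w\big) = 0$, so that $w \equiv 1$ corresponds to $u_a$. The constant $a(1-a)>0$ because $a\in(0,1)$ by \ref{5C1}. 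Passing to the phase plane $(w, w')$, the point $(1,0)$ is an equilibrium, and a local analysis of its linearization — together with monotonicity of the vector field coming from $p < -1-\sigma < 0$ (so $w \mapsto w^p$ is decreasing and convex on $(0,\infty)$) — should show that there is a one-parameter family of trajectories emanating from $(1,0)$ as $t \to -\infty$, i.e. as $x \searrow 0$; parametrizing these by $\alpha > 0$ gives $u^\alpha$ with $u^\alpha(x)/u_a(x) = w(\log x) \to 1$.

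Next I would establish the ordering and global existence. Since we want $u^{\alpha_1} > u^{\alpha_2} > u_a$, the natural choice is to take the branch of trajectories with $w > 1$ (equivalently $u^\alpha > u_a$) and $w' $ of an appropriate sign; the parameter $\alpha$ can be taken to be (a normalization of) $w$ or $w'$ at some fixed station, or the ``speed'' of departure from the equilibrium. Monotone dependence on initial data for the IVP — which follows from a comparison/sub-supersolution argument applied to the linear operator obtained by writing the equation for the difference $u^{\alpha_1} - u^{\alpha_2}$ and using that $t \mapsto t^p$ is decreasing for $p<0$, so the zeroth-order coefficient has the favorable sign — yields $u^{\alpha_1} > u^{\alpha_2}$ on all of $(0,\infty)$ once it holds near $0$. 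The inequality $u^\alpha > u_a$ comes the same way by comparing $u^\alpha$ with $u_a$ itself. Global existence on $(0,\infty)$ (no blow-up at finite $x$ and no vanishing) follows because $u^\alpha$ is concave (from \eqref{5E1}, $u^{\alpha\prime\prime}<0$) and bounded below by the positive function $u_a$, so it cannot blow up or hit zero; this is exactly the kind of a-priori bound used in Lemma \ref{lem-basic-n=1}.

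The main obstacle I expect is the behavior at the origin: showing both that the family really is one-parameter (neither empty nor larger) and that every member satisfies the sharp asymptotic $u^\alpha(x)/u_a(x)\to 1$. The linearization of the autonomous $w$-equation at $w=1$ has characteristic roots determined by $a$, $p$ and $\sigma$, and one must check that these roots have the right sign pattern (one giving the stable/attracting direction as $t\to-\infty$ corresponding to approach to the equilibrium, the relevant eigenvalue being simple) so that a genuine one-parameter stable manifold exists and all trajectories on it are asymptotic to $w\equiv 1$ rather than, say, oscillating or approaching a different constant. I would handle this by an explicit computation of the linearized coefficients, invoking the stable-manifold theorem (or, more elementarily, an integral-equation fixed-point argument near $t=-\infty$), and then upgrading convergence of the linearization to the full asymptotic $w(t)\to 1$ via a Gronwall-type estimate; the explicit solutions in \eqref{eqExample} for $\sigma=1$, $p=-4$ serve as a consistency check that the construction produces exactly this kind of family.
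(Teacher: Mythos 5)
Your overall route is the same as the paper's (factor out $u_a$, pass to the ratio and the logarithmic variable so that $u_a$ becomes the equilibrium, build the local family near $x=0$ from the one\-/dimensional unstable direction of the saddle — the roots of $\mu^2+(2a-1)\mu+a(1-a)(p-1)=0$ are real and of opposite sign since $p<1$ — then propagate the ordering by a maximum principle and get global existence from concavity plus the lower bound $u_a$). There is, however, one genuine gap: you assert that \ref{5C1} forces $p<-1-\sigma<0$ and you justify the comparison step by the monotonicity of $t\mapsto t^p$ for $p<0$, applied to the difference $u^{\alpha_1}-u^{\alpha_2}$ of solutions of \eqref{5E1}. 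Under \ref{5C1} the exponent $p$ need not be negative: for $\sigma\in(-2,-1)$ one has $-1-\sigma\in(0,1)$, so the hypothesis includes $0<p<-1-\sigma$, and this sub-regime genuinely belongs to the existence range (see Table \ref{table-n=1}) and hence to the scope of Theorem \ref{5T1}. In that case the difference $h=u^{\alpha_1}-u^{\alpha_2}$ satisfies $h''+x^\sigma\beta(x)h=0$ with $\beta(x)=\int_0^1 p\,(\theta u^{\alpha_2}+(1-\theta)u^{\alpha_1})^{p-1}\,d\theta>0$, so the zeroth-order coefficient has the \emph{unfavorable} sign, an interior positive maximum of $h$ is not excluded, and your ``once it holds near $0$ it holds everywhere'' step breaks down; the same issue affects your comparison of $u^\alpha$ with $u_a$, and also the side remark that $w\mapsto w^p$ is ``decreasing and convex'' (for $0<p<1$ it is increasing and concave).

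The repair — and this is exactly what the paper does in section \ref{5NU3} — is to run the comparison on the ratio $v=u/u_a$ rather than on $u$ itself: $v$ solves \eqref{5E3A}, and $h=v^{\alpha_1}-v^{\alpha_2}$ satisfies $x^2h''+2axh'+c_a^{p-1}\beta(x)h=0$ where now $\beta$ is an average of $g'(v)=pv^{p-1}-1$ with $g(v)=v^p-v$, which is negative for all $v\ge 1$ whenever $p<1$; thus the sign is favorable throughout \ref{5C1}, including $0<p<1$. Note that this requires knowing $v^{\alpha_1},v^{\alpha_2}\ge 1$ on the interval in question, which you get by first applying the same argument to the pair $(\alpha,0)$, since $v^0\equiv1$ and the strict ordering near $x=0$ is supplied by the asymptotics $v^\alpha=1+\alpha x^{\mu_+}+o(x^{\mu_+})$ from \eqref{5E13}. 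With this modification the rest of your plan (saddle structure at $(1,0)$, parametrization by the coefficient $\alpha$ of $x^{\mu_+}$ — equivalently by scaling, which is why the family is exactly one-parameter — the unstable-manifold/fixed-point construction at the origin, and concavity plus the bound $u^\alpha>u_a>0$ for global extension) goes through and coincides with the paper's proof.
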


We prove Theorem \ref{5T1} in section \ref{5NU3} below. The role of the parameter $\alpha$ is described by \eqref{5E13} below. It turns out that all solutions to the equation is above $u_a$; see section \ref{5NU4}. For the case \ref{5C2}, our non-uniqueness result reads as follows.

\begin{theorem} \label{5T2}
Assume \ref{5C2}. We have the following claims:
\begin{enumerate}
\item If $a>1/2$, then there is a positive solution $u$ of \eqref{5E1} satisfying
\begin{align*}
\lim_{x\searrow0} \frac{u(x)}{u_a(x)} = 0 
\quad\text{and}\quad
\lim_{x\nearrow+\infty} \frac{u(x)}{u_a(x)} = 1.
\end{align*}

\item If $a=1/2$, then there is a positive solution $u$ of \eqref{5E1} satisfying
\begin{align*}
\lim_{x\searrow0}\frac{u(x)}{u_a(x)} = 0 
\quad\text{and}\quad
\lim_{x\nearrow+\infty}\frac{u(x)}{u_a(x)} = 0.
\end{align*}

\item If $a<1/2$, then there is a positive solution $u$ of \eqref{5E1} satisfying
\begin{align*}
\lim_{x\searrow0}\frac{u(x)}{u_a(x)} = 1
\quad\text{and}\quad
\lim_{x\nearrow+\infty}\frac{u(x)}{u_a(x)} = 0.
\end{align*}
\end{enumerate} 
In addition to the existence, the solution is unique up to scaling transformation $u\mapsto u_\lambda$ with $u_\lambda(x)=\lambda^{-a}u(\lambda x)$.
\end{theorem}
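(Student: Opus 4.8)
The plan is to reduce \eqref{5E1} to an \emph{autonomous} second-order ODE by the Emden--Fowler substitution adapted to $u_a$, and then to read off all three existence statements and the uniqueness from the phase portrait. Recall that under \ref{5C2} one has $p>1$ and $a=(\sigma+2)/(1-p)\in(0,1)$. Writing $u(x)=x^a v(t)$ with $t=\ln x$ (so that $x\searrow0\leftrightarrow t\to-\infty$ and $x\nearrow+\infty\leftrightarrow t\to+\infty$), the very choice of $a$ makes all powers of $x$ cancel and \eqref{5E1} becomes
\[
\ddot v+(2a-1)\dot v-a(1-a)v+v^p=0,\qquad t\in\R,
\]
equivalently the planar system $\dot v=w$, $\dot w=-(2a-1)w+a(1-a)v-v^p$ on $\{v\ge0\}$, where the vector field is well defined because $p>1$. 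Its only equilibria in $\{v\ge0\}$ are $O=(0,0)$ and $P=(c_a,0)$ with $c_a=(a(1-a))^{1/(p-1)}$, the point $u\equiv u_a$. Under the substitution one has $u/u_a=v/c_a$, while the scaling $u\mapsto u_\lambda$ becomes the time translation $v(\cdot)\mapsto v(\cdot+\ln\lambda)$; hence ``unique up to scaling'' will mean ``a single orbit''. The decisive structural object is the energy
\[
E(v,w)=\frac12 w^2+W(v),\qquad W(v)=-\frac12 a(1-a)v^2+\frac{v^{p+1}}{p+1},
\]
which satisfies $\dot E=(1-2a)\dot v^2$ along trajectories; note $W(0)=0$, $W<0$ on $(0,v_{\max})$ with a unique interior minimum at $c_a$, and $W(v)\to+\infty$.

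Linearizing at $O$ (the term $v^p$ is of higher order since $p>1$) gives eigenvalues $1-a>0$ and $-a<0$, so $O$ is a hyperbolic saddle for every $a\in(0,1)$: the branch $W^u_+$ of its unstable manifold inside $\{v>0\}$ is a single orbit along which $v(t)\sim c\,e^{(1-a)t}\to0$ as $t\to-\infty$, so that $u/u_a\to0$ as $x\searrow0$; likewise the branch $W^s_+$ of its stable manifold inside $\{v>0\}$ has $v(t)\to0$ as $t\to+\infty$, so that $u/u_a\to0$ as $x\nearrow+\infty$. Linearizing at $P$ gives trace $1-2a$ and determinant $(p-1)a(1-a)>0$, so $P$ is a sink if $a>1/2$, a genuine center if $a=1/2$ (the Hamiltonian case, $W''(c_a)>0$), and a source if $a<1/2$. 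Now suppose $a>1/2$ and follow $W^u_+$ forward in time: since $E(O)=0$, $E$ is nonincreasing, and the orbit is not stationary, one gets $E(t)<0$ for all $t$; because $E\ge0$ on $\{v=0\}$ this keeps the orbit inside $\{v>0\}$ and confines it to a compact set, so the orbit is global. Its $\omega$-limit set then consists of equilibria, hence, being nonempty and connected, equals $\{O\}$ or $\{P\}$; it cannot be $\{O\}$ because $E$ has strictly decreased below $0$, so $v(t)\to c_a$, i.e. $u/u_a\to1$ as $x\nearrow+\infty$. (One may instead invoke Poincar\'e--Bendixson after ruling out periodic orbits via $\dot E\le0$.) This $W^u_+$ orbit is the solution claimed in (1).

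The case $a<1/2$ reduces to the previous one: reversing time turns the $v$-equation into one of the same form with $a$ replaced by $1-a>1/2$ (equivalently, the Kelvin transform $u(x)\mapsto x\,u(1/x)$ carries \eqref{5E1} to an equation of the same type with $a$ replaced by $1-a$ and interchanges the limits at $x=0$ and $x=+\infty$); applying the preceding analysis and transforming back produces a positive global solution with $u/u_a\to1$ as $x\searrow0$ and $u/u_a\to0$ as $x\nearrow+\infty$, which is (3). When $a=1/2$ the energy is conserved, so $W^u_+$ lies in the level set $\{E=0\}$; since $\{E=0,\,v\ge0\}=\{(v,\pm\sqrt{-2W(v)}):0\le v\le v_{\max}\}$ is a simple closed curve through $O$ enclosing $P$, the branch $W^u_+$ is a homoclinic orbit to $O$, positive for all finite $t$ with $v(t)\to0$ as $t\to\pm\infty$; this is (2). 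For uniqueness in all three cases: any positive global solution whose ratio $u/u_a$ tends to $0$ at the relevant end(s) has $(v,\dot v)\to(0,0)$ there --- the standard consequence of hyperbolicity of the saddle $O$ --- so its orbit lies on $W^u_+$, on $W^s_+$, or on the homoclinic loop $W^u_+=W^s_+$, respectively, each of which is a single orbit modulo time translation, i.e. modulo $u\mapsto u_\lambda$.

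The hard part will be the global analysis in the dissipative case $a>1/2$: showing that the $W^u_+$ orbit neither leaves $\{v>0\}$ nor blows up, and that it actually limits onto $P$ rather than accumulating on a periodic orbit. The energy identity $\dot E=(1-2a)\dot v^2$ is exactly what makes this work --- it forces $E<0$ along the orbit, which simultaneously confines it to a compact subset of $\{v>0\}$ and, through LaSalle's invariance principle (or Poincar\'e--Bendixson together with the absence of periodic orbits), drives it to $P$. The only other point requiring the usual care is the implication ``$v(t)\to0$ at an end $\Rightarrow$ the orbit lies on the one-dimensional (un)stable manifold of $O$'', used in the uniqueness statement.
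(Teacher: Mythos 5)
Your proposal is correct and follows essentially the same route as the paper: the substitution $u=u_a v$, $z=\log x$ reducing \eqref{5E1} to the autonomous equation \eqref{5E5}, the energy identity $\frac{d}{dz}E=-(2a-1)\dot V^2$ giving a heteroclinic orbit to $(1,0)$ for $a>1/2$, the conserved-energy homoclinic loop for $a=1/2$, time reversal for $a<1/2$, and uniqueness from the one-dimensionality of the orbit through the saddle at the origin. The only cosmetic difference is that you invoke the unstable manifold theorem for the saddle where the paper constructs the local solution via a Briot--Bouquet lemma (Corollary \ref{5T6}), a correspondence the paper itself points out.
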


We prove Theorem \ref{5T2} in section \ref{5NU5} below. Putting the two theorems above together, we deduce that for both cases \ref{5C1} and \ref{5C2}, uniqueness for positive solutions to \eqref{5E1} fails.

\subsection{Equivalent equations} \label{5NU1} 

Let $u$ be a positive solution of \eqref{5E1}. We set $$u=u_a v.$$ Then it is easy to verify that $v$ is positive and is of class $C^2$. Moreover, $v$ solves
\[
v'' + 2ax^{-1} v' + c^{p-1}_a x^{-2} v(v^{p-1}-1) = 0 \quad \text{for}\ x > 0.
\]
A more convenient form of the above equation is
\begin{equation} \label{5E3A}
	x^2 v'' + 2axv' + c^{p-1}_a v(v^{p-1}-1) = 0
\quad \text{for}\ x > 0	
\end{equation}
or
\begin{equation} \label{5E3}
	 x(xv') ' + (2a-1)xv' + c^{p-1}_a v(v^{p-1}-1) = 0
	\quad \text{for}\ x > 0.
\end{equation}
It is well-known that \eqref{5E3} can be written in the form of first order system
\begin{align}
\left\{
\begin{aligned} \label{5E4}
	& xy'_1 = y_2, \\
	& xy'_2 = -(2a-1)y_2 - c^{p-1}_a y_1(y^{p-1}_1-1)
\end{aligned}
\right.
\end{align}
for $x > 0$ by setting $y_1=v$ and $y_2=xv'$. The system \eqref{5E4} is often called a Briot--Bouquet system and it is well studied especially when the nonlinear term is analytic; see e.g.\ \cite{GT}. We next change the independent variable $x$ to $z=\log x$. Since $xd/dx=d/dz$, the equation \eqref{5E3} becomes
\begin{equation} \label{5E5}
	\ddot{V} + (2a-1) \dot{V} + c^{p-1}_a V(V^{p-1}-1) = 0
	\quad \text{for}\ z \in \R
\end{equation}
for $V(z)=v(e^z)$, where $\dot{V}=dV/dz$ and $\ddot{V}=d^2V/dz^2$. This is now an autonomous differential equation. The behavior of $v$ near $x=0$ for \eqref{5E3} corresponds to that of $V$ near $z=-\infty$.

\subsection{A local solution to a Briot--Bouquet system} \label{5NU2} 

We consider a Briot--Bouquet system
\begin{equation} \label{5E6}
	x \frac{dy}{dx} = Ay + f(y)
	\quad \text{for}\ x > 0.
\end{equation}
Here $A$ is a real $m\times m$ matrix and $f$ is an $\R^m$-valued $C^1$-function in a neighborhood of zero of $\R^m$ satisfying $$\lim_{y\to0} \frac {f(y)}y=0;$$ the unknown function $y$ is $\R^m$-valued. We start with the existence of a local solution to \eqref{5E6} with prescribed asymptotic behavior near zero.
\begin{lemma} \label{5T3}
Let $\gamma$ be a positive eigenvalue of $A$ and $y_0$ be its eigenvector.
 Then, there exists a (unique) local solution $y\in C^1(0,T)$ to \eqref{5E6} for some $T>0$ such that
\[
	\lim_{x\searrow 0} \frac{y(x)}{x^\gamma} = y_0.
\]
\end{lemma}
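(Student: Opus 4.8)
The plan is to recast \eqref{5E6} near $x=0$ as a fixed-point problem for the rescaled unknown, then solve it by a contraction mapping argument. First I would write $y(x) = x^\gamma(y_0 + w(x))$, where $w$ is the new unknown with $w(x)\to 0$ as $x\searrow 0$; substituting into \eqref{5E6} and using $A y_0 = \gamma y_0$, the leading terms cancel and one is left with an equation of the schematic form
\[
x w'(x) = (A-\gamma I) w(x) + x^{-\gamma} f\big(x^\gamma(y_0+w(x))\big).
\]
Since $f(y)/|y|\to 0$ as $y\to 0$, the inhomogeneous term is $o(1)$ uniformly as $x\searrow 0$ whenever $|w|$ stays bounded; more precisely, for $|w|\le 1$ and $x$ small it is bounded by $x^\gamma$ times a modulus-of-continuity factor, hence genuinely small. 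The operator $L w := xw' - (A-\gamma I)w$ is the natural object to invert: its fundamental solution is $x^{A-\gamma I}$ (matrix power via the exponential, $x^{A-\gamma I}=e^{(\log x)(A-\gamma I)}$), so the Duhamel-type representation on $(0,T)$ reads
\[
w(x) = \int_0^x \Big(\frac{x}{s}\Big)^{A-\gamma I}\, s^{-1-\gamma}\, f\big(s^\gamma(y_0+w(s))\big)\, ds,
\]
with the lower limit $0$ being the correct choice precisely because we want $w(x)\to0$; the integral converges near $s=0$ as long as the real parts of the eigenvalues of $A-\gamma I$ are not too negative relative to the decay of the integrand, and in the worst case one simply shrinks $T$.

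The core step is to verify that the map $\mathcal{T}$ defined by the right-hand side above is a contraction on the complete metric space $X_T = \{ w \in C([0,T];\R^m) : w(0)=0,\ \|w\|_\infty \le 1\}$ for $T$ small. For the self-map property one estimates $\|\mathcal{T}w\|_\infty$ using $|x^{A-\gamma I}|\le C x^{-\mu}$ on $(0,1]$ for a suitable $\mu<\gamma$ (chosen strictly between $\gamma$ and $\gamma - \min\Re(\mathrm{spec}\,A - \gamma)$ when needed, and $\mu$ can even be taken $0$ if $\gamma$ is the largest-real-part eigenvalue), so that $s^{-1-\gamma}\cdot s^{\gamma}\cdot(\text{small})$ integrates against $s^{\mu-\gamma}\,ds/s^{\mu}\ldots$ — the bookkeeping gives a bound $\le C T^{\varepsilon}$ for some $\varepsilon>0$, which is $\le 1$ for $T$ small. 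For the contraction estimate one uses the $C^1$ regularity of $f$ together with $Df(0)=0$ (a consequence of $\lim_{y\to0} f(y)/|y| = 0$) to get a local Lipschitz bound for $f$ with small constant near $0$; this makes $\|\mathcal{T}w_1-\mathcal{T}w_2\|_\infty \le C T^\varepsilon \|w_1-w_2\|_\infty$, again $<1$ for $T$ small. The Banach fixed-point theorem then yields a unique $w\in X_T$, and the corresponding $y(x)=x^\gamma(y_0+w(x))$ is the desired solution; that $y\in C^1(0,T)$ follows by reading off $y'$ from \eqref{5E6} itself, and uniqueness within the stated asymptotic class follows from uniqueness of the fixed point (any solution with $y(x)/x^\gamma\to y_0$ produces, via the same representation formula, a $w\in X_{T'}$ for some $T'$, hence coincides with ours near $0$ and then on all of $(0,T)$ by ODE uniqueness).

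The main obstacle I anticipate is the contribution of other eigenvalues of $A$ to $x^{A-\gamma I}$: if $A$ has eigenvalues with real part larger than $\gamma$, or is non-diagonalizable so that $x^{A-\gamma I}$ carries powers of $\log x$, then the crude bound $|x^{A-\gamma I}|\le C x^{-\mu}$ must be justified carefully (logarithmic factors are harmless since they are absorbed by any positive power of $x$, and eigenvalues with larger real part force $\mu>0$ but still $\mu<\gamma$ is achievable after possibly shrinking to the relevant spectral situation). In the application to \eqref{5E4} the matrix $A=\begin{psmallmatrix}0&1\\0&-(2a-1)\end{psmallmatrix}$ has eigenvalues $0$ and $1-2a$, and the positive eigenvalue $\gamma$ we care about is $\max(0,1-2a)$ when it is positive — here the analysis is completely explicit and the integrability near $s=0$ is transparent, so the general obstacle does not actually bite. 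I would therefore present the argument in a way that isolates this spectral bound as a single preliminary estimate and then proceeds mechanically.
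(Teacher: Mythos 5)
Your opening reduction is the same as the paper's (set $y=x^\gamma(y_0+w)$ and use $Ay_0=\gamma y_0$), but from that point you take a genuinely different route, and it has a gap. You invert the whole linear part by variation of constants, representing $w$ through the kernel $(x/s)^{A-\gamma I}$, whereas the paper writes $(x\psi)'=F(x,\psi)$ with $F(x,\psi)=(A-\gamma+1)\psi+x^{-\gamma}f\left(x^\gamma(y_0+\psi)\right)$ and solves the \emph{non-singular} averaged equation $\psi(x)=\frac1x\int_0^xF\left(s,\psi(s)\right)ds$ by the contraction result quoted from \cite{GG} (Lemma \ref{5L8}). The difference is not cosmetic: under the hypotheses of Lemma \ref{5T3} the only information on $f$ is $f\in C^1$ with $f(y)/|y|\to0$, i.e.\ $|f(y)|\leq\omega(|y|)\,|y|$ with $\omega(r)\to0$ but with \emph{no rate}. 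Even in the most favorable spectral situation (kernel bounded), your integrand is of size $s^{-1}\omega(Cs^\gamma)$ near $s=0$, and $\int_0^x s^{-1}\omega(Cs^\gamma)\,ds$ may diverge — e.g.\ $f(y)=y/\log(1/|y|)$ is $C^1$ with $f(y)/y\to0$ and makes this integral infinite — so your map $\mathcal T$ need not even be defined on $X_T$, and the claimed bounds $\|\mathcal Tw\|_\infty\leq CT^\varepsilon$ and contraction constant $CT^\varepsilon$ cannot be extracted. Those estimates do hold if one strengthens the hypothesis to a quantitative rate such as $|f(y)|\leq C|y|^{1+\delta}$ (true in the paper's application, where $f_0(w)=O(w^2)$), but that is an extra assumption the lemma does not make. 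The paper's formulation avoids the issue altogether: the averaging kernel $\frac1x\int_0^x$ has mass one and the term $x^{-\gamma}f\left(x^\gamma(y_0+\psi)\right)$ is bounded (in fact $o(1)$), so no singular integral ever appears.

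There are also problems in your spectral bookkeeping. For $0<s\leq x$ the size of $(x/s)^{A-\gamma I}$ is governed by the \emph{largest} real part of the spectrum of $A-\gamma I$, not the smallest; if $A$ has an eigenvalue with real part exceeding $\gamma$ (which the lemma permits), the kernel grows like $(x/s)^\beta$ with $\beta>0$, the divergence sits at $s=0$, and shrinking $T$ does not help — one would have to split off those spectral components and integrate them from $x=T$ rather than from $0$ (a Lyapunov--Perron type splitting), which your argument does not do, and your claim that ``$\mu<\gamma$ is achievable'' is unsupported. Your discussion of the application is also based on the wrong matrix: from \eqref{5E4} and Corollary \ref{5T6} one has $A=\bigl(\begin{smallmatrix}0&1\\-c&-b\end{smallmatrix}\bigr)$ whose eigenvalues are the roots $\mu_\pm$ of $\mu^2+b\mu+c=0$, not $0$ and $1-2a$; it is true (but for a different reason than you give) that there $\gamma=\mu_+$ has maximal real part, so the spectral obstacle is absent in the application. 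Finally, your uniqueness step — that any solution with $y(x)/x^\gamma\to y_0$ satisfies the representation formula with lower limit $0$ — again uses the spectral position of $\gamma$ and is only immediate when $\gamma$ has maximal real part; the paper instead gets uniqueness directly from the uniqueness of the fixed point in Lemma \ref{5L8}. In short: the strategy is reasonable, but as written the key convergence, self-map and contraction estimates do not follow from the stated hypotheses, and repairing them requires either an extra vanishing rate on $f$ or a spectral-splitting argument.
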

\begin{proof}
Since $y'=\gamma x^{\gamma-1}\varphi+x^\gamma\varphi'$ for $y=x^\gamma\varphi$, the equation \eqref{5E6} is equivalent to the equation of $\varphi$ of the form
\begin{equation} \label{5E7}
	x\varphi' = (A-\gamma)\varphi + x^{-\gamma} f(x^\gamma\varphi)
	\quad \text{for}\ x > 0.
\end{equation}
It suffices to find a solution $\varphi\in C[0,T)\cap C^1(0,T)$ for \eqref{5E7} for some $T>0$ such that $\varphi(0)=y_0$. As $A y_0 = \gamma y_0$, we see that $\psi=\varphi-y_0$ solves
\[
	x\psi' = (A-\gamma)\psi + x^{-\gamma} f\left(x^\gamma(y_0+\psi)\right)
\]
or
\[
(x\psi)' = F(x,\psi)
\]
with
\[	
	F(x,\psi) = (A-\gamma+1)\psi + x^{-\gamma} f\left(x^\gamma(y_0+\psi)\right).
\]
 Our problem \eqref{5E7} with $\varphi(0)=y_0$ is 
equivalent to the following integral equation
\begin{equation} \label{5E8}
	\psi(x) = \frac{1}{x} \int^x_0 F\left(s,\psi(s)\right)ds
	\quad \text{with}\ \psi(0) = 0,
\end{equation}
where $\psi\in C^1(0,T)\cap C^0[0,T)$ for small $T>0$. Since $\gamma>0$ and $f(y)/y\to0$ as $y\to0$, setting $F(0,0)=0$ yields that $F\in C^1\left([0,\delta)\times(-\delta,\delta)^m\right)$ for small $\delta>0$ provided that $y_0$ is taken sufficiently small. Since $f(y)/y\to0$ as $y\to0$, we observe that $(\partial F/\partial\psi)(0,0)=0$. We now apply Lemma \ref{5L8} below to conclude that there is a unique solution to \eqref{5E8} for small $T>0$. The proof is now complete.
\end{proof}

\begin{lemma}[see \cite{GG}] \label{5L8}
For $F\in C^1\left([0,\delta)\times(-\delta,\delta)^m\right)$, let $w_0\in\R^m$ satisfy
$$F(0,w_0)=w_0, \quad \frac{\partial F}{\partial w}(0,w_0)=0.$$
 Then, there exists a unique function $w\in C[0,T)\cap C^1(0,T)$ for small $T>0$, which solves
\[
	w(x) = \frac{1}{x} \int^x_0 F\left(s,w(s)\right)ds,
	\quad w(0) = w_0 \quad x\in(0,T).
\] 
\end{lemma}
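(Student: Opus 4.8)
The plan is to recast the integral equation as a fixed-point problem and solve it by the Banach contraction principle. The only subtlety is that the averaging operator $g \mapsto \bigl(x \mapsto x^{-1}\int_0^x g(s)\,ds\bigr)$ has operator norm exactly $1$ on $C([0,T];\R^m)$ with the sup norm, so it is not itself a contraction; the contraction must come entirely from the smallness of the Lipschitz constant of $F$ in the $w$-variable near $(0,w_0)$, which is precisely what the hypothesis $(\partial F/\partial w)(0,w_0)=0$ (together with the continuity of $\partial F/\partial w$) provides.

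First I would fix small radii $\rho>0$ and $T>0$, with $\rho$ chosen so that the closed ball $\overline B(w_0,\rho)$ lies in $(-\delta,\delta)^m$, and introduce the complete metric space
\[
X = \bigl\{\, w \in C([0,T];\R^m) : w(0)=w_0,\ \|w-w_0\|_\infty \le \rho \,\bigr\},
\]
with the metric induced by $\|\cdot\|_\infty$. Define $\mathcal T$ on $X$ by $(\mathcal T w)(0)=w_0$ and $(\mathcal T w)(x) = x^{-1}\int_0^x F(s,w(s))\,ds$ for $x\in(0,T]$. Since $F$ is continuous near $(0,w_0)$ and $w(s)\to w_0$ as $s\searrow 0$, one has $F(s,w(s))\to F(0,w_0)=w_0$, so $\mathcal T w$ is continuous up to $x=0$ with value $w_0$; note that the hypothesis $F(0,w_0)=w_0$ is exactly what guarantees that $\mathcal T w$ again satisfies the prescribed value at the origin.

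Next I would verify the two conditions of the contraction principle. Set $L(\delta'):=\sup\{\,|(\partial F/\partial w)(s,w)| : s\in[0,\delta'],\ |w-w_0|\le\delta'\,\}$; continuity of $\partial F/\partial w$ and $(\partial F/\partial w)(0,w_0)=0$ give $L(\delta')\to 0$ as $\delta'\searrow 0$, so after shrinking $\rho$ and $T$ we may assume $L(\rho)\le 1/2$. For the self-mapping property, write, for $w\in X$ and $s\in[0,T]$,
\[
F(s,w(s))-w_0 = \bigl(F(s,w(s))-F(0,w(s))\bigr) + \bigl(F(0,w(s))-F(0,w_0)\bigr),
\]
where the first bracket is bounded by $s\sup|\partial F/\partial s| \le CT$ on the relevant compact neighborhood, and the second, by the mean value theorem, is bounded by $L(\rho)\,|w(s)-w_0| \le \rho/2$; taking $T$ small so that $CT\le\rho/2$ and averaging, we get $\|\mathcal T w - w_0\|_\infty \le \rho$. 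For the contraction property, the mean value theorem gives $|F(s,w_1(s))-F(s,w_2(s))| \le L(\rho)\|w_1-w_2\|_\infty \le \tfrac12\|w_1-w_2\|_\infty$, and since averaging has norm $1$ this yields $\|\mathcal T w_1-\mathcal T w_2\|_\infty \le \tfrac12\|w_1-w_2\|_\infty$.

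Banach's theorem then produces a unique $w\in X$ with $\mathcal T w=w$, i.e. $xw(x)=\int_0^x F(s,w(s))\,ds$ and $w(0)=w_0$. The right-hand side is $C^1$ on $(0,T)$ with derivative $F(x,w(x))$, whence $w\in C^1(0,T)$ and $w'(x)=(F(x,w(x))-w(x))/x$. For uniqueness within the stated class: any $w\in C[0,T')\cap C^1(0,T')$ solving the integral equation with $w(0)=w_0$ satisfies $w=\mathcal T w$, and being continuous with $w(0)=w_0$ it lies in $X$ after shrinking $T'$, hence agrees with the constructed fixed point near $0$. I expect the self-mapping estimate to be the only genuinely delicate point: one must arrange the decomposition so that the contribution not controlled by the small Lipschitz constant $L(\rho)$ is the explicitly $O(x)$ term coming from the $s$-dependence of $F$, which is then absorbed by choosing $T$ small.
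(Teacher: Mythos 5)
Your argument is correct and follows essentially the same route as the paper, which simply observes that $w \mapsto \frac{1}{x}\int_0^x F(s,w(s))\,ds$ is a contraction on $\{w\in C[0,T]\mid w(0)=w_0\}$ for small $T$ (citing \cite[Lemma 8]{GG}); your write-up just fills in the self-mapping and contraction estimates, using $F(0,w_0)=w_0$ for continuity at $x=0$ and $\frac{\partial F}{\partial w}(0,w_0)=0$ for the small Lipschitz constant, exactly as intended. The only cosmetic point is to take $T\le\rho$ so that $L(\rho)$ indeed controls $\bigl|\frac{\partial F}{\partial w}(s,\cdot)\bigr|$ for all $s\in[0,T]$, which is immediate.
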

This can be proved by observing that
$w \mapsto (1/x) \int^x_0 F\left(s,w(s)\right)ds$
is a contraction mapping in $X=\left\{w\in C[0,T]\mid w(0)=w_0 \right\}$ if $T$ is sufficiently small; see \cite[Lemma 8]{GG}.

As an application of Lemma \ref{5T3}, we obtain the following result which allows us to construct local solutions to \eqref{5E3}.

\begin{corollary} \label{5T6}
Let $\mu_+$ be a positive root of 
\begin{equation} \label{5E9}
	\mu^2 + b\mu + c = 0.
\end{equation}
Let $f_0$ be a $C^1$ function near $0$ such that $f(0)=f'(0)=0$. Given any $w_0\in\R$, there exist some $T>0$ and a unique local solution $w\in C[0,T]\cap C^2(0,T)$ to
\begin{equation*} \label{5E10}
x(xw')' + bxw' + cw + f_0(w) = 0\quad \text{for}\ x \in (0,T)
\end{equation*}
such that
\begin{equation} \label{5E11}
	\lim_{x\searrow 0} \frac{w(x)}{x^{\mu_+}} = w_0.
\end{equation}
In addition, $w$ is non-decreasing near $x=0$ if $w_0>0$ and is non-increasing near $x=0$ if $w_0<0$.
\end{corollary}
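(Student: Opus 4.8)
The plan is to reduce Corollary~\ref{5T6} to Lemma~\ref{5T3} by rewriting the scalar second-order equation as a first-order Briot--Bouquet system in the spirit of \eqref{5E4}. First I would set $y_1 = w$ and $y_2 = x w'$, so that $x y_1' = y_2$ and, using $x(xw')' = x y_2'$, the equation $x(xw')' + bxw' + cw + f_0(w) = 0$ becomes $x y_2' = -b y_2 - c y_1 - f_0(y_1)$. Thus $y = (y_1, y_2)$ solves $x\,dy/dx = Ay + f(y)$ with
\[
A = \begin{pmatrix} 0 & 1 \\ -c & -b \end{pmatrix}, \qquad f(y) = \begin{pmatrix} 0 \\ -f_0(y_1) \end{pmatrix}.
\]
Since $f_0(0) = f_0'(0) = 0$, we have $f \in C^1$ near $0$ and $f(y)/|y| \to 0$ as $y \to 0$, so Lemma~\ref{5T3} applies once we identify a positive eigenvalue of $A$.

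The characteristic polynomial of $A$ is $\lambda^2 + b\lambda + c$, which is exactly \eqref{5E9}; hence $\gamma := \mu_+ > 0$ is a positive eigenvalue, with eigenvector $y_0 = (1, \mu_+)$ (checking $A y_0 = \gamma y_0$ amounts to $-c - b\mu_+ = \mu_+^2$, i.e.\ \eqref{5E9} again). Given $w_0 \in \R$, I would apply Lemma~\ref{5T3} to the scaled eigenvector $w_0 y_0 = (w_0, w_0 \mu_+)$ — note this is still an eigenvector, and Lemma~\ref{5T3} allows any eigenvector, so taking it small enough that the local existence hypotheses hold is no restriction since we may then rescale. This yields a unique local $y \in C^1(0,T)$ with $x^{-\gamma} y(x) \to w_0 y_0$; in particular $x^{-\mu_+} y_1(x) \to w_0$, which is \eqref{5E11}. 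Setting $w := y_1$, the first equation $x w' = y_2$ shows $w \in C^1$ and then $w' = y_2/x$ together with the second equation shows $w \in C^2(0,T)$, and by construction $w$ solves the stated ODE on $(0,T)$.

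For the monotonicity claim near $x = 0$, I would argue from the asymptotics: since $w(x) \sim w_0 x^{\mu_+}$ and, differentiating the relation $y = x^\gamma \varphi$ with $\varphi \to w_0 y_0$, one gets $w'(x) = y_2(x)/x \sim w_0 \mu_+ x^{\mu_+ - 1}$. As $\mu_+ > 0$ and $x^{\mu_+ - 1} > 0$, the sign of $w'$ near $0$ is that of $w_0 \mu_+$, i.e.\ $w' > 0$ near $0$ if $w_0 > 0$ and $w' < 0$ near $0$ if $w_0 < 0$; shrinking $T$ gives the monotonicity. The main obstacle — really the only subtle point — is the uniqueness assertion: Lemma~\ref{5T3} gives uniqueness of $y$ among solutions of \eqref{5E6} with the prescribed rate, but one must check that \emph{every} $C^2$ solution $w$ of the scalar equation satisfying \eqref{5E11} gives rise, via $(w, xw')$, to such a $y$ with the full vector asymptotic $x^{-\gamma}(w, xw') \to w_0(1,\mu_+)$; this requires deriving the companion limit $x^{-\mu_+} x w'(x) \to w_0 \mu_+$ from \eqref{5E11} alone, which one obtains by feeding \eqref{5E11} back into the integral formulation \eqref{5E8} (equivalently, a de l'Hôpital/Tauberian step using $x(xw')' = -bxw' - cw - f_0(w) = O(x^{\mu_+})$). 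Once that is in place, uniqueness for $w$ follows from uniqueness for $y$ in Lemma~\ref{5T3}.
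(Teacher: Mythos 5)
Your proposal is correct and follows essentially the same route as the paper: rewrite the scalar equation as the Briot--Bouquet system with $y=(w,xw')$, note that the characteristic polynomial of $A=\begin{pmatrix}0&1\\-c&-b\end{pmatrix}$ is \eqref{5E9}, apply Lemma \ref{5T3} with the eigenvector $y_0=(w_0,\mu_+w_0)^T$, and read off the monotonicity from the companion limit $w'(x)/x^{\mu_+-1}\to\mu_+w_0$. Your extra remark on why uniqueness of $w$ under \eqref{5E11} alone reduces to uniqueness of $y$ is a point the paper leaves implicit, and your sketch of how to recover the derivative asymptotics from the equation is a reasonable way to close it.
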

\begin{proof} 
We set $y_1=w$, $y_2=xw'$ to get an equation for $y=(y_1,y_2)$ of the form
\[
	x \frac{dy}{dx} = Ay + f(y)
\]
with
\[
	A = \begin{pmatrix}
	0 & 1 \\
	-c & -b \\
\end{pmatrix}, 
\quad
	f(y) = \begin{pmatrix}
	0 \\
	-f_0( y_1 ) \\
\end{pmatrix}.
\]
The eigen equation for $A$ is nothing but \eqref{5E9}.
 Since $y_0=(w_0,\mu_+w_0)^T$ is an eigenvector for the eigenvalue $\mu_+$, applying Lemma \ref{5T3} yields the desired solution $w$ satisfying \eqref{5E11} and
\[
\lim_{x\searrow 0} \frac{w'(x)}{x^{\mu_+ - 1}} = \mu_+ w_0 .
\]
The above limit implies the monotonicity of $w$ near $x=0$ as stated.
\end{proof}

In the next two sections, we prove Theorem \ref{5T1} and Theorem \ref{5T2}.


\subsection{The case \ref{5C1}: proof of Theorem \ref{5T1}} \label{5NU3} 

Let $\alpha\geq0$ be arbitrary but fixed. We note that $v\equiv1$ is a stationary solution to \eqref{5E3}.
 We then set $v=1+w$ to obtain
\begin{equation} \label{5E12}
	x(xw')' + (2a-1)xw' + c^{p-1}_a(p-1)w + f_0(w) = 0,
\end{equation}
where 
$$f_0 (w) = a(1-a) \big[ (1+w)^p - 1 - pw \big].$$
Obviously, $f_0$ is smooth near $w=0$ and $f_0(w)/w\to0$ as $w\to0$. Our aim is to show that \eqref{5E12} admits a solution. This is done through several steps.

First we show that \eqref{5E12} admits a local solution near $x=0$. By \ref{5C1}, we observe that $p-1<0$. Thus, the equation \eqref{5E9} with $b=2a-1$ and $c=c^{p-1}_a(p-1)$ has always two real roots $\mu_+$ and $\mu_-$ satisfying $\mu_+>0>\mu_-$. Then for the rate $w_0 := \alpha$, by Corollary \ref{5T6}, there exists a local solution $w$ to \eqref{5E12} satisfying the asymptotic behavior \eqref{5E11}. We set 
\[
\eta^\alpha(x) = 
\left\{\
\begin{aligned}
& x^{-\mu_+} w(x) & &\text{for } x>0,\\
& \alpha & & \text{for } x= 0.
\end{aligned}
\right.
\]
By \eqref{5E11}, the function $\eta$ is continuous up to $x=0$. Our desired solution $u^\alpha$ to \eqref{5E1} is defined as follows
\begin{equation} \label{5E13}
	u^\alpha(x) = u_a(x) \left(1 + \eta^\alpha(x) x^{\mu_+}\right)
\end{equation}
at least near $x=0$. By definition, $u^0 \equiv u_a$ and $u^0 \not\equiv u_a$ if $\alpha>0$.

We next observe that the class $\{u^\alpha\}_{\alpha \geq 0}$ enjoys the order preserving property in the following sense
\begin{equation} \label{5E14}
	u^{\alpha_1}(x) > u^{\alpha_2}(x)
	\quad\text{for}\ x \in (0,T_*]\ \text{ if}\ \alpha_1 > \alpha_2 \geq 0
\end{equation}
as far as $u^{\alpha_1}$ and $u^{\alpha_2}$ exist on $(0,T_*]$. Admitting this fact and as $u^0 \equiv u_a$, we conclude that $u^{\alpha_2}>u_a$ as far as $u^{\alpha_2}$ exists so it can be extended to global positive solution; in addition, no blow up occurs since $u^{\alpha_2}$ is always concave.

It remains to prove the order preserving property \eqref{5E14}. This can be proved by the maximum principle. This order preserving property holds at least near $x=0$ by the form \eqref{5E13} of $u^\alpha$.
 Suppose \eqref{5E14} were false.
 Then, there would exist $x_0>0$ such that
\begin{align*}
u^{\alpha_1}(x_0) = u^{\alpha_2}(x_0) \quad \text{and} \quad 
u^{\alpha_1}(x) > u^{\alpha_2}(x)\quad \text{for}\ x \in (0,x_0).
\end{align*}
We consider equation \eqref{5E3A} for $v^\alpha$, namely
\[
x^2 ( v^\alpha )'' + 2ax (v^\alpha)' + c^{p-1}_a \big( (v^\alpha)^p-v^\alpha \big) = 0.
\]
Subtracting equation for $v^{\alpha_2}$ from that for $v^{\alpha_1}$, we observe that $h=v^{\alpha_1}-v^{\alpha_2}$ solves
\[
	x^2 h'' + 2axh' + c^{p-1}_a \beta(x)h = 0
\]
with
\[
\beta(x) = \int^1_0 g' \left(\theta v^{\alpha_2}(x) + (1-\theta)v^{\alpha_1}(x)\right)d\theta
	\quad\text{and} \quad 
g(v) = v^p-v.
\]
As $v^{\alpha_1}, v^{\alpha_2} \geq 1$, we must have $\theta v^{\alpha_2} + (1-\theta)v^{\alpha_1} \geq 1$ for any $\theta \in [0,1]$. Since $p<1$, we see that
\[
	g'(v) = pv^{p-1}-1 < 0 \quad \text{for}\ v > 1
\]
so that $\beta(x)<0$ for $x\in(0,x_0)$. Since $h(0)=h(x_0)=0$ and $h(x)>0$ for $x\in(0,x_0)$, there must be a positive maximum at some $x_*\in(0,x_0)$. By the equation for $h$, we must have $h''(x_*)>0$. However, this would contradict the maximum principle $h''(x_*)\leq0$. We now conclude \eqref{5E14}. 

Thus, the proof of Theorem \ref{5T1} is now complete.

\begin{remark}
Part of the order preserving property \eqref{5E14} indicates that the specific solution $u_a$ serves as a lower bound for the one-parameter family $\{u^\alpha\}_{\alpha \geq 0}$. Remarkably, in Theorem \ref{5T8} below, we show that this is in fact true for all solutions under the case \ref{5C1}. However, this is no longer true in the case \ref{5C2} as Theorem \ref{5T2-new} shows.
\end{remark}

\subsection{The case \ref{5C2}: proof of Theorem \ref{5T2}} \label{5NU5} 

Recall the decomposition $u=u_a v$ and $V(z) = v(e^z)$. We also recall from \eqref{5E5} that $V$ solves the equation
\[
	\ddot{V} + (2a-1)\dot{V} + c^{p-1}_a V(V^{p-1}-V) = 0
\]
with $c^{p-1}_a=a(1-a)$. The above equation is a special example of Lienard system and there is a rich literature; see e.g.\ \cite[Section 3.8]{Pe}. Our aim is still to prove that the preceding equation, namely \eqref{5E5}, admit a global solution $V$. Clearly, one can rewrite \eqref{5E5} as
\[
\frac d{dz} \begin{pmatrix}
V\\
\dot V
\end{pmatrix}
=
- \begin{pmatrix}
\dot V\\
(2a-1) \dot{V} + c^{p-1}_a (V^p-V)
\end{pmatrix}.
\] 
From this one can easily check that there are two equilibria $(0,0)$ and $(1,0)$. The linerization of the preceding system is as follows
\[
\frac d{dz} \begin{pmatrix}
V\\
\dot V
\end{pmatrix}
=
- \begin{pmatrix}
0 & 1\\
c^{p-1}_a (p V^{p-1}-1) & 2a-1
\end{pmatrix}
\begin{pmatrix}
V\\
\dot V
\end{pmatrix}.
\] 
Keep in mind that $c^{p-1}_a = a(1-a)$. Hence, the solution behaves like
\[
	\ddot{V} + (2a-1)\dot{V} + a(1-a)(p-1)V = 0
\]
near $V=1$ and like
\[
	\ddot{V} + (2a-1)\dot{V} - a(1-a)V = 0
\]
near $V=0$. Since the case $a\leq1/2$ can be obtained by transforming $z\mapsto -z$, we may assume that $a\geq1/2$. In the sequel, we consider the two cases $a>1/2$ and $a=1/2$ separately.
 
 
\subsubsection{The case $a > 1/2$}

In this case, it suffices to construct a heteroclinic orbit satisfying $V\geq0$. Because the linearized equation around $V=0$ is of the form
\[
	\ddot{V} + (2a-1)\dot{V} - a(1-a)V = 0,
\]
it is easy to verify that its characteristic roots are $-a$ and $1-a$ with $1-a>0$. By local existence result established in section \ref{5NU2}, there is a solution to
\[
	x(xv')' + (2a-1)xv' + c^{p-1}_a v(v^{p-1}) = 0
\]
for $x>0$ with the following asymptotic behavior
\[
\lim_{x \searrow 0} \frac{v(x)}{x^{1-a}}=v_0
\]
for any given $v_0>0$. This guarantees that there is a solution to \eqref{5E5} for $z<z_0$ with some $z_0\in\R$ such that
\[
	\lim_{z\searrow-\infty} \frac{V(z)}{e^{(1-a)z}} = v_0.
\]
In particular, $V \searrow 0$ as $z \searrow -\infty$. Note that $V$ is uniquely determined by $v_0>0$. (The local existence result to \eqref{5E1} in section \ref{5NU2} corresponds to the existence of a local unstable manifold \cite{Pe} for \eqref{5E5}.) To be a heteroclinic orbit, it remains to prove that this $V$ can be extended globally and tends to $1$ as $z\nearrow+\infty$. Let $F$ be a primitive of $c^{p-1}_a v(v^{p-1}-1)$, i.e.
\[
F(w) = \int^w_1 c^{p-1}_a v(v^{p-1}-1) dv
=\frac{c^{p-1}_a}{2(p+1)} \big( 2 w^{p+1} - (p+1)w^2 + p-1 \big).
\]
For $w \geq 0$, this $F$ has the global minimum $0$ at $w=1$. In particular, $F \geq 0$. Now multiply $\dot{V}$ with \eqref{5E5} to get
\[
	\frac{d}{dz} E(V, \dot{V}) = -(2a-1) \dot{V}^2
\]
with the total energy
\[
	E(V, \dot{V}) = \frac{1}{2} |\dot{V}|^2 + F(V).
\]
For simplicity, we denote $Y=(V, \dot{V})$. Obviously, $E(Y) \geq 0$ and equality occurs at $Y=(1,0)$. Since $a>1/2$, the energy $E$ must decrease along $Y$. Note that as $\partial E/\partial V$ does not depend on $\dot V$ each level curve of $E$ is transversal to $V$-axis. Consequently, $E(Y)$ is strictly decreasing along $Y$. Thus the solution $Y$ must converge to the minimizer of $E(Y)$, namely
$$Y(z)\to(1,0) \quad \text{as } z \nearrow +\infty.$$
Moreover, $V>0$ since $E(Y)\leq E(0,0)$. We have thus constructed a positive solution $V$ of \eqref{5E5} such that
\[
	\lim_{z\searrow-\infty}V(z) = 0, \quad\text{and}\quad
	\lim_{z\nearrow+\infty}V(z) = 1.
\]
This gives a heteroclinic orbit for \eqref{5E5} as claimed. Transforming to $u$ gives the desired solution to \eqref{5E1}.

Finally, the uniqueness (up to translation of $z$) follows from the uniqueness of the orbit near $Y=(0,0)$.

 
\subsubsection{The case $a =1/2$}

In this case, as $dE/dz = 0$ the energy is constant along an orbit and the equation is reduced to a first order equation
\[
	\frac12\dot{V}^2 + F(V) = E_0
\]
where $E_0$ is a constant. Taking $E_0=E(0,0)$ to get desired homoclinic orbit; see the bold curve in Figure \ref{F1}. In other words, there is a global (positive) solution $V$ of \eqref{5E5} such that
\[
	\lim_{z\to \pm\infty}V(z) = 0.
\]
The uniqueness (up to translation) follows from the uniqueness of the orbit. Since the translation in $z$ corresponds to the scaling of $u$, the desired uniqueness follows.

\begin{remark} \label{5TR}
For $a=1/2$, one can make use of shooting method to obtain the same conclusion. To be more precise, one can prove that there exists a decreasing solution $V$ of \eqref{5E5}, which satisfies $V(0)>1$ $\dot{V}(0)=0$ and $V(z)=V(-z)$ for $z>0$ and $V(z)\searrow 0$ as $|z|\nearrow +\infty$. This implies that there is a positive global solution $u$ to \eqref{5E1} such that
\[
	\lim_{x\searrow0} \frac{u(x)}{u_a(x)} = 0 \quad\text{and}\quad
	\lim_{x\nearrow+\infty} \frac{u(x)}{u_a(x)} = 0.
\]
For interested readers, we refer to \cite{G} for similar arguments. It is worth noting that this type of method is useful if the system is not autonomous. Apparently, in our current autonomous problem, the phase plane approach is easier.
\end{remark}

\begin{remark}\label{5TR-new}
In the above proof, one takes $E_0=E(0,0)$ to get a homoclinic orbit, now if $E_0$ is taken so that $E_0<E(0,0)$, then one gets a periodic solution; see concentric curves near $(1,0)$ in Figure \ref{F1}. This implies the existence of another type of a positive solution to \eqref{5E1} different from that in Theorem \ref{5T2}. For convenience, we state this new existence result as a theorem and put it below this remark.
\end{remark}

\begin{theorem}\label{5T2-new}
Assume that $a=1/2$. Then there is a positive solution u of \eqref{5E1} such that $u-u_a$ changes its sign infinitely many times near $x=0$ and that $u/u_a$ is bounded in $(0,+\infty)$ but $u/u_a$ does not converge as $x$ tends to zero.
\end{theorem}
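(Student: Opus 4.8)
The plan is to turn the informal observation of Remark \ref{5TR-new} into a proof: when $a = 1/2$ the reduced equation \eqref{5E5} is conservative, and a small periodic orbit encircling the equilibrium $(1,0)$ gives the claimed solution. First I would invoke the reduction of section \ref{5NU5}: setting $u = u_a v$ and $V(z) = v(e^z)$, the choice $a = 1/2$ kills the friction term in \eqref{5E5}, so $V$ solves the autonomous equation $\ddot V + c_a^{p-1} V(V^{p-1}-1) = 0$, which is Hamiltonian with conserved energy $E(V,\dot V) = \frac12|\dot V|^2 + F(V)$ and potential $F(w) = \frac{c_a^{p-1}}{2(p+1)}\big(2w^{p+1} - (p+1)w^2 + p - 1\big)$. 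I would record that in the setting \ref{5C2} the choice $a = 1/2$ forces $p > 1$ (indeed $a = (\sigma+2)/(1-p) = 1/2$ gives $p = -3 - 2\sigma > 1$ when $\sigma < -2$), hence $c_a^{p-1} = a(1-a) = \frac14 > 0$; this positivity is what drives the whole argument.

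Next I would carry out the elementary phase-plane analysis of $F$ on $(0,+\infty)$. Since $F'(w) = c_a^{p-1} w(w^{p-1}-1)$ and $p > 1$, the function $F$ is strictly decreasing on $(0,1)$, strictly increasing on $(1,+\infty)$, with a unique, nondegenerate minimum at $w = 1$ at which $F(1) = 0$ and $F''(1) = c_a^{p-1}(p-1) > 0$; moreover $F(0) = \frac{c_a^{p-1}(p-1)}{2(p+1)} > 0$ and $F(w) \to +\infty$ as $w \to +\infty$ because $p + 1 > 2$. Consequently, for every fixed $E_0 \in (0, F(0))$ there are unique turning points $w_- \in (0,1)$ and $w_+ \in (1,+\infty)$ with $F(w_\pm) = E_0$, and the level set $\{E = E_0\}$ is a single smooth closed curve contained in the half-plane $\{V > 0\}$ encircling $(1,0)$. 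On it $\dot V^2 = 2(E_0 - F(V))$, which is positive except at the turning points, where $F'(w_\pm) \ne 0$; hence the integral $\int_{w_-}^{w_+} dw/\sqrt{2(E_0 - F(w))}$ converges, so the orbit is traced out by a \emph{non-constant, periodic} solution $V \in C^2(\R)$ with $w_- \le V(z) \le w_+$ for all $z$, which crosses the level $1$ transversally (because $E_0 > 0 = F(1)$) exactly twice per period. This is the family of concentric orbits near $(1,0)$ in Figure \ref{F1}.

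Finally I would translate back. Put $v(x) = V(\log x)$; then $v \in C^2(0,+\infty)$ with $0 < w_- \le v(x) \le w_+$ for all $x > 0$, so $u := u_a v$ is a positive $C^2$-solution of \eqref{5E1} on $(0,+\infty)$ and $u/u_a = v$ is bounded on $(0,+\infty)$. Because $V$ is non-constant and periodic, $V(z)$ has no limit as $z \to -\infty$, hence $u(x)/u_a(x) = V(\log x)$ does not converge as $x \searrow 0$. Likewise $V(z) - 1$ attains both signs in every period, so $u - u_a = u_a\big(V(\log x) - 1\big)$ changes sign infinitely many times as $x \searrow 0$. The only genuine work is the potential-well analysis in the middle step — the unimodality of $F$, the inequality $E_0 < F(0)$ which keeps the orbit bounded away from $V = 0$ (and hence guarantees $u > 0$), and the convergence of the period integral — and I expect this routine bookkeeping, all of which hinges on the sign $p > 1$ (equivalently $c_a^{p-1} > 0$ and $F''(1) > 0$), to be the only point requiring care rather than any conceptual obstacle.
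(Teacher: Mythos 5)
Your proposal is correct and follows essentially the same route as the paper: for $a=1/2$ the damping term in \eqref{5E5} vanishes, the energy $E(V,\dot V)=\frac12|\dot V|^2+F(V)$ is conserved, and choosing a level $E_0$ with $0<E_0<E(0,0)=F(0)$ yields one of the closed orbits around $(1,0)$ (the concentric curves in Figure \ref{F1}), i.e.\ a non-constant periodic $V$ with $0<w_-\leq V\leq w_+$ and $w_-<1<w_+$, which upon setting $u=u_a\,V(\log x)$ gives exactly the bounded, non-convergent, infinitely oscillating ratio $u/u_a$ claimed in Theorem \ref{5T2-new}. You merely supply the potential-well bookkeeping (unimodality of $F$, $F(1)=0<E_0<F(0)$, nondegenerate turning points, finite period) that the paper leaves implicit in Remark \ref{5TR-new}.
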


It is worth noting that the existence of oscillating solutions mentioned in Theorem \ref{5T2-new} can also be proved by using Sturm's comparison principle, see Lemma \ref{5T7}, without using the phase plane analysis. We leave the detail for interested readers.

\subsection{Final remarks} 
\label{5NU7}

In this final section, we give here several speculations.


\subsubsection{An explicit solution in the case \ref{5C1}}

Let us discuss the example given in the introduction; see \eqref{eqExample}. In this case, we have $\sigma=1$ and $p=-4$ so that $a=(2+\sigma)/(1-p)=3/5$ and $2a-1=1/5$. The characteristic roots $\mu_+>0>\mu_-$ are solutions of
\[
	\mu^2 + \frac{1}{5}\mu - \frac{6}{5} = 0
\]
since $c^{p-1}(1-p)=(3/5)(3/5-1)5= -6/5$. Thus 
$$\mu_\pm=\frac{1}{2}\left(-\frac{1}{5}\pm\frac{1}{5}\sqrt{121}\right)=\frac{1}{10}(-1\pm11),$$ so $\mu_+=1$ and $\mu_-=-6/5$. So the explicit one-parameter family of solutions is
\begin{equation}\label{5Ep}
u^\alpha (x) = \big( \frac{25}{6} \big)^{1/5} x^{3/5}(1+\alpha x)^{2/5},
\end{equation}
which clearly satisfies our asymptotic form \eqref{5E13}.


\subsubsection{Asymptotic behavior of constructed solutions in the case \ref{5C1}}

In the case \ref{5C1} so that $p<1$, the solution $u^\alpha$ constructed in Theorem \ref{5T1} seems to have linear asymptotic behavior, which is also the maximal growth of solution at infinity; see Lemma \ref{lem-basic-n=1}. Indeed, as $p<1$, for sufficiently large $V$ the equation \eqref{5E5} can be approximated by
\begin{equation*} \label{5E19}
\ddot{V} + (2c-1) \dot{V} - c^{p-1}_a V = 0.
\end{equation*}
The characteristic roots of
\[
\mu^2 + (2c-1)\mu - c^{p-1}_a = 0
\]
equal
\[
\frac{1}{2} \Big( -(2a-1) \pm \sqrt{(2a-1)^2 - 4c^{p-1}_a} \Big).
\]
Since
\[
(2a-1)^2 + 4c^{p-1}_a = (2a-1)^2 + 4a(1-a) = 1,
\]
the two roots are $1-a>0$ and $-a<0$. The asymptotic behavior of $V$ as $z\nearrow +\infty$ equals $\exp\left((1-a)z\right)$. If one converts to $v$, this must be $x^{1-a}$ (up to a multiple constant). Thus 
\[
u^\alpha = vu_a \sim \operatorname{const} \cdot x
\]
as $x\nearrow +\infty$.


\subsubsection{Asymptotic behavior of solutions in the case \ref{5C1} near $x=0$}

Recall that the solution constructed in Theorem \ref{5T1} enjoys the asymptotic behavior
\[
\lim_{x\searrow 0}\frac{u(x)}{u_a(x)}=1.
\]
An immediate consequence of this is that $u(0)=0$. In this section, by providing an explicit solution $u$ to \eqref{5E1}, we show that the equation \eqref{5E1} admits solutions which do not enjoy the above asymptotic behavior. Although the argument below works for any case of $\sigma$ and $p$ satisfying \ref{5C1}, for simplicity, we only consider the equation \eqref{5E1} in the case $\sigma = 0$ and $p=-4$, namely $-u''(x) = u(x)^{-4}$. It is easy to verify that 
\[
u(x) = \frac{180^{2/5}}{6} (1+x)^{2/5}
\]
solves \eqref{5E1} in this particular case. The above solution is simply the dual of the solution given in \eqref{5Ep}. Obviously $u(0)>0$. Notice that in this case we have $a=2/5$, leading $u_a (x)= c_{2/5} x^{2/5}$. Hence
\[
\lim_{x \searrow 0} \frac{u(x)}{u_{2/5}(x)}=+\infty.
\]
Thus, the above solution does not belong to the one-parameter family of solutions $\{u^\alpha\}_{\alpha>0}$ constructed in Theorem \ref{5T1}.


\subsubsection{Asymptotic behavior of solutions in the case \ref{5C2} near $x=0$}

In contrast to the case \ref{5C1} discussed above, we show in this section that in the case \ref{5C2} any positive solution $u\in C[0,+\infty)\cap C^2(0,+\infty)$ to \eqref{5E1} always satisfies $$\lim_{x\searrow 0}u(x)=0$$ since $\sigma<-2$ and $p>1$.
Indeed, in view of Lemma \ref{lem-basic-n=1} and by way of contradiction, there holds
\begin{equation}\label{5E20}
\lim_{x\searrow 0}u(x)=k>0.
\end{equation}
Then, there is some small $\delta>0$ and small $\epsilon>0$ such that
$$ u(x)^p \geq \epsilon \quad \text{for all } x \in (0, \delta].$$
Integrating the equation \eqref{5E1} gives
\[
u(x) = (x-\delta) u'(\delta) - \int_x^\delta \int_y^\delta s^\sigma u(s)^p ds dy
\]
for $x \in (0, \delta)$. As 
\[
\int_x^\delta \int_y^\delta s^\sigma u(s)^p ds dy
\geq \epsilon \int_x^\delta \int_y^\delta s^\sigma ds dy
=\epsilon \frac{ x^{\sigma+2} - x (\sigma+2)\delta^{\sigma+1} + (\sigma+1)\delta^{\sigma+2} }{(\sigma+1)(\sigma+2)},
\]
we deduce that
\[
u(x)
\leq (x-\delta) u'(\delta) - \epsilon \frac{ x^{\sigma+2} - x (\sigma+2)\delta^{\sigma+1} }{(\sigma+1)(\sigma+2)}
\]
for all $x \in (0, \delta)$. Keep in mind that $\sigma <-2$. Hence by letting $x \searrow 0$ we easily obtain $u(x) \searrow -\infty$, which is clearly a contradiction. Hence, \eqref{5E20} holds. Notice that the one-parameter family of solutions constructed in Theorem \ref{5T1} also enjoys 
$$\lim_{x\searrow 0}u^\alpha (x)=0,$$
because $u^\alpha \sim u_a$ near zero; but, different from the case \ref{5C1}, \eqref{5E20} may not imply that the limit $u(x)/u_a(x)$ exists as $x\searrow 0$. In fact, as stated in Theorem \ref{5T2-new}, there is an oscillatory solution $u$ such that $u(x)/u_a(x)$ is oscillating as $x\searrow 0$ at least for $a=1/2$.


\subsubsection{No solution strictly below $u_a$ in the case \ref{5C1}} 
\label{5NU4}

In this section, we show that no solution to \eqref{5E1} stays strictly below $u_a$ in the case \ref{5C1}. The argument is based on a version of Sturm's comparison principle for oscillation, which will be mentioned below, which is similar to that of \cite{G}. 

\begin{lemma}\label{5T7}
Suppose that $u$ and $v$ solve
\begin{align*}
(\sigma u')' + \sigma q_1 u \geq 0 \geq (\sigma v')' + \sigma q_2 v \label{5E16}
\end{align*}
on $(a,b)$, where $\sigma>0$ and $q_1,q_2$ are in $C[a,b]$. Suppose that
\[
u>0 \quad \text{on} \; (a,b) \quad \text{and} \quad u(b)=0.
\]
Assume that
\[
	v(a)u'(a) - u(a)v'(a) \geq 0 \quad \text{and} \quad v(a) \geq 0
\]
and 
\[
\text{either} \quad v(a)>0 \quad \text{or} \quad v'(a)>0.
\]
If $q_1\leq q_2$ on $(a,b),$ then $v \; \text{ has zero in } \; (a,b)$ unless $q_1\equiv q_2$ and $u$ and $v$ are solutions to $(\sigma w')' + \sigma q_1 w =0$ in $(a,b)$.
\end{lemma}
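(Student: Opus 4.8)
The plan is to run a Wronskian (Picone-type) comparison argument. First I would argue in contrapositive form: assume $v$ has no zero in $(a,b)$. Since $v(a)\ge 0$ and either $v(a)>0$ or $v'(a)>0$, the function $v$ is positive immediately to the right of $a$; as it never vanishes on $(a,b)$, continuity then forces $v>0$ on all of $(a,b)$. I would then introduce the weighted Wronskian
\[
W:=\sigma\,(u'v-uv'),
\]
which is continuous on $[a,b]$ and differentiable on $(a,b)$ (since $u,v$ solve the stated inequalities classically). A direct expansion gives $W'=(\sigma u')'v-(\sigma v')'u$, and then multiplying $(\sigma u')'\ge-\sigma q_1u$ by $v>0$ and $-(\sigma v')'\ge\sigma q_2v$ by $u>0$ yields
\[
W'\ \ge\ -\sigma q_1uv+\sigma q_2uv\ =\ \sigma\,(q_2-q_1)\,uv\ \ge\ 0
\]
on $(a,b)$, because $q_2\ge q_1$ and $\sigma,u,v>0$. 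Hence $W$ is non-decreasing on $[a,b]$.

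Next I would read off the endpoint values. At $x=a$ the hypothesis $v(a)u'(a)-u(a)v'(a)\ge 0$ says exactly that $W(a)\ge 0$. At $x=b$, using $u(b)=0$ we get $W(b)=\sigma(b)u'(b)v(b)$; here $v(b)=\lim_{x\to b^-}v(x)\ge 0$ while $u'(b)\le 0$ (because $u>0$ on $(a,b)$ and $u(b)=0$), so $W(b)\le 0$. Combined with the monotonicity of $W$ this gives $0\le W(a)\le W(b)\le 0$, hence $W\equiv 0$ on $[a,b]$.

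It then remains to unravel the equality case. From $W\equiv 0$ we get $W'\equiv 0$ on $(a,b)$, so every inequality in the chain above is an equality: $\sigma(q_2-q_1)uv\equiv 0$ forces $q_1\equiv q_2$ on $(a,b)$, hence on $[a,b]$ by continuity of $q_1,q_2$; and $u'v-uv'\equiv 0$ forces $(u/v)'\equiv 0$, i.e.\ $u\equiv cv$ on $(a,b)$ for some constant $c>0$. Substituting $u=cv$ and $q_1=q_2$ collapses the two differential inequalities into
\[
0\ \le\ (\sigma u')'+\sigma q_1u\ =\ c\big[(\sigma v')'+\sigma q_1v\big]\ =\ c\big[(\sigma v')'+\sigma q_2v\big]\ \le\ 0 ,
\]
so $(\sigma u')'+\sigma q_1u\equiv 0$ and $(\sigma v')'+\sigma q_1v\equiv 0$; that is, $u$ and $v$ are both solutions of $(\sigma w')'+\sigma q_1w=0$, which is precisely the exceptional alternative in the statement.

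The Wronskian identity and the sign bookkeeping are routine; the part that needs the most care will be the boundary analysis — making sure $W$ is genuinely non-decreasing up to the closed endpoints (this rests on the implicit regularity $u,v\in C^1[a,b]$ and $\sigma u',\sigma v'\in C^1(a,b)$), and pinning down the signs $W(a)\ge 0\ge W(b)$ even when $v(a)$, $u(a)$ or $v(b)$ vanishes. It is also worth checking that the hypothesis ``$v(a)>0$ or $v'(a)>0$'' is used only to force $v>0$ just to the right of $a$, hence (under the no-zero assumption) on all of $(a,b)$; without it $v$ could start at $0$ with $v'(a)\le 0$ and be negative immediately.
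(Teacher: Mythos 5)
Your argument is correct and is essentially the paper's own Sturm--Picone comparison: your pointwise statement that $W=\sigma(u'v-uv')$ satisfies $W'=f_1v+f_2u+\sigma(q_2-q_1)uv\geq 0$ together with the endpoint signs $W(a)\geq 0\geq W(b)$ is just the differentiated form of the paper's single integration by parts of $vf_1+uf_2$ over $(a,b)$, and your equality-case analysis (forcing $q_1\equiv q_2$ and both differential inequalities to collapse to the equation) matches the paper's conclusion. The regularity caveats you flag (that $u,v\in C^1[a,b]$ with $\sigma u',\sigma v'$ differentiable, so $W$ is continuous up to the endpoints) are implicit in the paper's proof as well.
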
 

Compared with the version in \cite{G}, our set of hypotheses in Lemma \ref{5T7} is slightly different. However, one can mimic the argument used in \cite{G} to prove the above lemma without difficulty. For completeness, we sketch its proof. For simplicity, we denote
\[
(\sigma u')' + \sigma q_1 u = f_1 \geq 0 \geq -f_2 = (\sigma v')' + \sigma q_2 v .
\]
If $v$ has no zero on $(a,b)$, then from the hypotheses on $v$ we deduce that $v>0$ on $(a,b)$. A simple calculation shows
\[
v f_1 + uf_2 = v(\sigma u')' - u(\sigma v')' + \sigma (q_1 -q_2) u v.
\]
Integrating by parts the above identity over $(a,b)$ yields
\[
- \int_a^b (v f_1 + uf_2) + \big[\sigma (vu' - u v') \big] \Big|_a^b +\int_a^b \sigma (q_1 -q_2) u v = 0
\]
Since $u'(b) \leq 0$, the boundary conditions yields
\[ 
 \big[ \sigma(vu' - uv') \big] \Big|^{b}_{a} 
=v(b)u'(b) - u(b)v'(b) - \big[ v(a)u'(a) - u(a)v'(a)\big] \leq 0.
\]
This leads a contradiction since the other two terms are strictly negative unless $q_1=q_2$ and $f_1=f_2=0$. This completes the proof.

As an application of Lemma \ref{5T7}, we obtain a non-existence for a positive solution strictly below $u_a(x)$ under the condition \ref{5C1}. For convenience, we state this as a theorem.

\begin{theorem} \label{5T8}
Assume \ref{5C1}. Then, there is no (global) positive solution to \eqref{5E1} strictly smaller than $u_a$.
 In particular, $u^\alpha$ in \eqref{5E13} for $\alpha<0$ cannot be extended to a global positive solution to \eqref{5E1}.
\end{theorem}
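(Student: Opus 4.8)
Here is how I would approach the proof; it runs by contradiction, using the Sturm‑type comparison of Lemma~\ref{5T7} together with the fact that the singular solution $u_a$ sits exactly at the critical Hardy potential $a(1-a)x^{-2}$. Suppose $\psi$ were a global positive $C^2$‑solution of \eqref{5E1} with $0<\psi<u_a$ on $(0,+\infty)$. First I would record three elementary reductions. (i) From $0<\psi(x)<u_a(x)=c_ax^a$ with $a\in(0,1)$, squeezing gives $\psi(0^+)=0$, while Lemma~\ref{lem-basic-n=1} gives that $\psi$ is concave with $\psi'\ge 0$, so $0\le\psi'(x)\le\psi(x)/x$ for all $x>0$. (ii) The Kelvin‑type change $\psi\mapsto\phi$, $\phi(x)=x\psi(1/x)$, sends \eqref{5E1} to itself with $a$ replaced by $1-a$ and preserves both ``global and positive'' and ``lying strictly below $u_a$''; hence we may assume $a\ge 1/2$. (iii) Once claim~(a) is proved, claim~(b) follows: for $p\le 0$ the order property \eqref{5E14} extends to $\alpha<0$ (there $g'(v)=pv^{p-1}-1<0$ on all of $(0,+\infty)$), so $u^\alpha<u^0\equiv u_a$ on its maximal interval and (a) applies; for $0<p<1$ the same conclusion follows, a bit more delicately, from a maximum‑principle comparison together with the fact that $u^\alpha$ with $\alpha<0$ starts out below $u_a$ by \eqref{5E13} and Corollary~\ref{5T6}.

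For the core argument I would put the equations in self‑adjoint form, $u_a''+q_au_a=0$ and $\psi''+q_\psi\psi=0$ with $q_a(x)=a(1-a)x^{-2}$ and $q_\psi(x)=x^\sigma\psi(x)^{p-1}$; since $p-1<0$ and $\psi<u_a$ one gets $q_\psi>q_a$ on all of $(0,+\infty)$. Next I would analyse the Wronskian $W:=u_a\psi'-u_a'\psi$, which is strictly decreasing because $W'=u_a\psi(q_a-q_\psi)<0$; using $\psi(0^+)=0$ and $0\le\psi'\le\psi/x$ one gets $|W(x)|\le Cx^{2a-1}$ near $0$, so for $a>1/2$ we have $W<0$ throughout, i.e.\ $x\mapsto\psi(x)/x^a$ is strictly decreasing. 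A blow‑down argument then pins down its limit: the functions $\psi_\lambda(x):=\lambda^{-a}\psi(\lambda x)$ again solve \eqref{5E1}, are locally uniformly bounded together with their first derivatives, and any subsequential limit is homogeneous of degree $a$, hence equals $0$ or $c_ax^a$; since $\psi(x)/x^a$ is strictly decreasing and $<c_a$, its limit lies in $\{0,c_a\}$ and is $<c_a$, so $\psi(x)/x^a\to 0$. Finally, $\psi(x)/x^a\to0$ means $\psi(x)<\varepsilon x^a$ for all large $x$, whence $q_\psi(x)>\varepsilon^{p-1}x^{-2}$ with $\varepsilon^{p-1}\to+\infty$ as $\varepsilon\searrow0$; thus $x^2q_\psi(x)$ eventually exceeds any $\kappa>\tfrac14$, and comparing $\psi$ via Lemma~\ref{5T7} with the oscillatory solutions $x^{1/2}\cos\!\bigl(\sqrt{\kappa-1/4}\,\log x\bigr)$ of $y''+\kappa x^{-2}y=0$ on intervals far out forces $\psi$ to vanish, contradicting $\psi>0$. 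When $p<0$ one can bypass oscillation theory entirely: then $t\mapsto t^p$ is strictly decreasing, so $w:=u_a-\psi$ satisfies $-w''=x^\sigma(u_a^p-\psi^p)<0$, i.e.\ $w$ is strictly convex, positive, and vanishes at $0^+$, hence is strictly increasing with a positive slope, hence grows at least linearly — incompatible with $w<u_a=c_ax^a$ and $a<1$.

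The step I expect to be the main obstacle is the borderline value $a=\tfrac12$ with $p>0$, where $q_a$ is exactly the critical Hardy potential: there $|W|\le Cx^{2a-1}=Cx^0$ no longer forces $W(0^+)\le0$, and in the one surviving sub‑case $\psi(x)/x^{1/2}\to c_a$ one has $x^2q_\psi(x)\to\tfrac14$, so the elementary oscillation comparison above is inconclusive. To close this case I would need a quantitative lower bound on the deficit $\delta(x):=1-\psi(x)/(c_ax^{1/2})$ of the shape $\delta(x)\gtrsim(\log x)^{-2}$, insert it into $q_\psi(x)=\tfrac1{4x^2}\bigl(1+(1-p)\delta(x)+o(\delta(x))\bigr)$, and invoke the Hille--Nehari refinement of the oscillation criterion (equivalently, a log‑weighted version of Lemma~\ref{5T7}); alternatively one can fall back on the phase‑plane description of \eqref{5E5}. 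Extracting such a uniform lower bound for $\delta$ from the equation is the delicate point, as is the $0<p<1$ case of reduction~(iii).
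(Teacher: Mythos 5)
Your reductions and the generic case are fine: the Kelvin symmetry $a\mapsto 1-a$, the convexity argument for $p\le 0$ (where $w=u_a-\psi$ is convex, positive, vanishes at $0^+$ and so grows linearly, beating $c_ax^a$), and, for $a>1/2$, the Wronskian monotonicity $W'=u_a\psi(q_a-q_\psi)<0$ with $|W|\lesssim x^{2a-1}\to 0$, followed by the blow-down identification of $\lim_{x\to\infty}\psi/x^a\in\{0,c_a\}$ and the Euler-equation oscillation comparison, are all sound. But the proposal does not prove the theorem: the borderline case $a=1/2$ with $0<p<1$ is a genuine, nonempty part of \ref{5C1} (take $\sigma=-(3+p)/2\in(-2,-3/2)$, for which $p<-1-\sigma=(1+p)/2$ holds), and there your two key tools both fail exactly as you note: $|W|\lesssim x^{2a-1}$ no longer forces $W(0^+)\le 0$, and since $x^2q_\psi=a(1-a)(\psi/u_a)^{p-1}$ the comparison potential only exceeds the critical constant $1/4$ by an amount controlled by the deficit $1-\psi/u_a$, so the plain Sturm comparison with $x^{1/2}\cos(\sqrt{\kappa-1/4}\,\log x)$ is inconclusive. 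The proposed fix (a quantitative bound $\delta(x)\gtrsim(\log x)^{-2}$ plus a Hille--Nehari refinement) is not derived from the equation, and "fall back on the phase-plane description of \eqref{5E5}" is precisely the argument that would have to be supplied. The same applies to the second assertion: for $0<p<1$ your step (iii) (that $u^\alpha$ with $\alpha<0$ stays below $u_a$) cannot be obtained by the maximum-principle comparison behind \eqref{5E14}, because $g'(v)=pv^{p-1}-1$ changes sign for $v<1$; you flag this but do not close it.

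For comparison, the paper avoids the borderline issue altogether by working with $V=u/u_a$ in the variable $z=\log x$, i.e.\ with \eqref{5E5}: monotonicity of $V$ is extracted directly from the ODE (no boundary term at $x=0$, hence no degeneracy at $a=1/2$), the limit $V(z)\to 0$ as $z\to+\infty$ follows from the integrated identity \eqref{5E-integral}, and then $V\to 0$ makes the potential $c_a^{p-1}(V^{p-1}-1)$ blow up, so it eventually clears any fixed threshold $m>(2a-1)^2/4$ and Lemma \ref{5T7} applies against an explicitly oscillatory constant-coefficient solution --- uniformly in $a\ge 1/2$, with no Hille--Nehari-type refinement needed. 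The second assertion is then handled not by an order comparison for $\alpha<0$ but by a short direct argument that the extension of $u^\alpha$ in \eqref{5E13} (which by Corollary \ref{5T6} starts below $u_a$ with $w,w'<0$) remains below $u_a$, so the first assertion applies. To salvage your write-up you would either have to carry out the $z$-variable argument for $a=1/2$ (at which point it subsumes your $a>1/2$ case as well) or genuinely establish the logarithmic deficit estimate you postulate; as it stands, the two admitted gaps cover nonempty parameter ranges of the statement.
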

\begin{proof}
The first statement is equivalent to saying that there is no global solution $V$ to \eqref{5E5} satisfying $0<V<1$. By changing the independent variable $z$ to $-z$, if necessary, we may assume that $2a-1\geq0$. Furthermore, note that $V^p-V>0$ for $0< V<1$ as $p<1$.

We first observe that $V$ must be non-increasing. Indeed, it follows from the equation \eqref{5E5} that if $\dot{V}(z_0)=0$ at some $z_0 > 0$ then $\ddot{V}(z_0)<0$. This implies that $V$ has no local minimum. If there is a local maximum of $V$ at $z_0$, namely $\dot{V}(z_0)=0$ and $\ddot{V}(z_0) \leq 0$, then $V$ must be non-decreasing in $(-\infty,z_0)$. Otherwise, there is some local minimum in $(-\infty, z_0)$, which is a contradiction. Hence, as $V^p-V>0$ and $(2a-1)\dot{V}\geq0$, there holds $\ddot{V}<0$ by \eqref{5E5}. This means $V$ is concave on $(-\infty,z_0)$. This is impossible since we assume that $V>0$ on $(-\infty,z_0)$; see also section \ref{ssect-thm-n=1-R}. Thus, we have just shown that $V$ is either non-decreasing or non-increasing in $\R$. In the case $V$ is non-decreasing, namely $\dot{V} \geq 0$, as before $V$ must be concave by \eqref{5E5}, which contradicts $V>0$. Thus, $V$ must be non-increasing.

We next observe that $\lim_{z \nearrow +\infty}V(z)=0$. Indeed, it is now clear that the limit $\lim_{z \nearrow +\infty}V(z)=:k$ exists. By way of contradiction, suppose $k>0$. Then by the monotonicity of $V$, we further have $k < 1$. Integrating \eqref{5E5} from $(1,z)$ yields
\begin{equation}\label{5E-integral}
	\dot{V}(z) - \dot{V}(1) + (2a-1) \left(V(z)- V(1)\right)
	+ c^{p-1}_a \int^z_1 \left(V^p(s)- V(s)\right) ds = 0.
\end{equation}
As $k \in (0,1)$, the integral term in \eqref{5E-integral} diverges to $+\infty$. In addition, as $V$ is non-increasing and bounded as $z\nearrow +\infty$, there is a sequence $z_j$ such that $\dot{V}(z_j)\to0$. This and the above identity lead a contradiction. 

With $\sigma(z)=\exp(2a-1)z$ and $q_2=c^{p-1}_a(V^{p-1}-1)$, we rewrite the equation \eqref{5E5} as
\[
	\frac{d}{dz} (\sigma\dot{V}) + \sigma q_2 V = 0.
\]
Take $m>(2a-1)^2/4$ so that $\mu^2+(2a-1)\mu+c^{p-1}_a m=0$ has a non-real zero.
 We compare with
\[
	\frac{d}{dz} (\sigma\dot{V}_0) + \sigma m V_0 = 0.
\]
This is a linear equation and its solution is of the form
\[
	V_0(z) = B \exp \Big(- \frac{(2a-1)z}2\Big) \cos \left(K(z-z_*)\right)
\]
with $K=(1/2) \sqrt{4m-(2a-1)^2}$ for some $B, z_*\in\R$. We take $z_2$ large enough so that $q_2(z_2)>m$.
This is possible since $V(z)\to0$ as $z\to0$ and $p<1$ so that $V^{p-1}\nearrow +\infty$ as $v\to0$.
Since $\dot{V}(z)\leq0$, we conclude that $q_2(z)>m$ for $z\in[z_2\ \infty)$.
We take $V_0$ so that 
$$V_0(z_2)=V(z_2), \quad V'_0(z_2)=V'(z_2).$$
In view of the form of $V_0$, we let $b$ be the first zero of $V_0$ for $b>z_2$.
 We apply Lemma \ref{5T7} with 
\[
q_1=m, \quad a=z_2, \quad u=V_0, \quad v=V, \quad f_1 \equiv f_2 \equiv 0
\]
 to conclude that $V$ must have zero in $(z_2,b)$.
 This contradicts the assumption that $V(z)>0$ for all $z\in\R$.
 We thus conclude that there is no solution to \eqref{5E5} satisfying $0<V<1$.
 This means that there is no global positive solution $u$ to \eqref{5E1} satisfying $u(x)<u_a(x)$ for $x>0$.

We next consider $u^\alpha$ for $\alpha<0$. Near $x=0$, we conclude from Corollary \ref{5T6} and $\alpha<0$ that $w<0$ and $w'<0$. This implies that $u^\alpha$ decreases and is below $u_a$ near $x=0$. Since $u_a$ is increasing and $u^\alpha$ is concave, the extended $u^\alpha$ must satisfy $u^\alpha(x)<u_a(x)$ everywhere. In other words, $u^\alpha$ is below $u_a$ everywhere. This violates what we have established earlier.
\end{proof}

\begin{remark}
In the case $p \leq 0$, the integral term in \eqref{5E-integral} still diverges to $+\infty$. Hence, by the argument leading to $\lim_{z \nearrow +\infty}V(z)=0$ we quickly conclude that there is no global solution $V$ to \eqref{5E5} satisfying $0<V<1$. This avoids using Sturm's comparison principle.
\end{remark}


\subsubsection{A phase plane for the transformed equation (\ref{5E5})}
\label{5NU6}

In this last part of the paper, we would like to mention the phase plane analysis for $Y=(V,\dot{V})$. Depending on the sign of $(2a-1)^2 - 4a(1-a)(p-1)$, we have two cases.

\noindent\textbf{Case 1}. Suppose $(2a-1)^2 - 4a(1-a)(p-1) < 0$, namely $p>(4a(1-a) )^{-1}$. In this case and depending on the size of $a$, the phase portrait is as in Figures \ref{F1}--\ref{F3}. We notice that the solutions found in Theorem \ref{5T2} correspond the bold curves in these figures. Moreover, any solution $u$ enjoys
\[
\limsup_{x\nearrow+\infty} \frac{u(x)}{u_a(x)} < +\infty,
\]
which basically says that $u/u_a$ is bounded, but not necessarily by $1$. Note that the solution mentioned in Remark \ref{5TR} also corresponds to the bold curve in Figure \ref{F1}. Still in the case $a=1/2$, as indicate in Theorem \ref{5T2-new} above, there exists a solution $u$ such that $u-u_a$ oscillates around zero. This oscillating solution corresponds to concentric curves near the point $(1,0)$ in Figure \ref{F1}.
\noindent
\begin{figure}[H]
\captionsetup{width=.3\linewidth}
 \begin{minipage}[b]{0.32\linewidth}
 \centering
 \includegraphics[width=4cm]{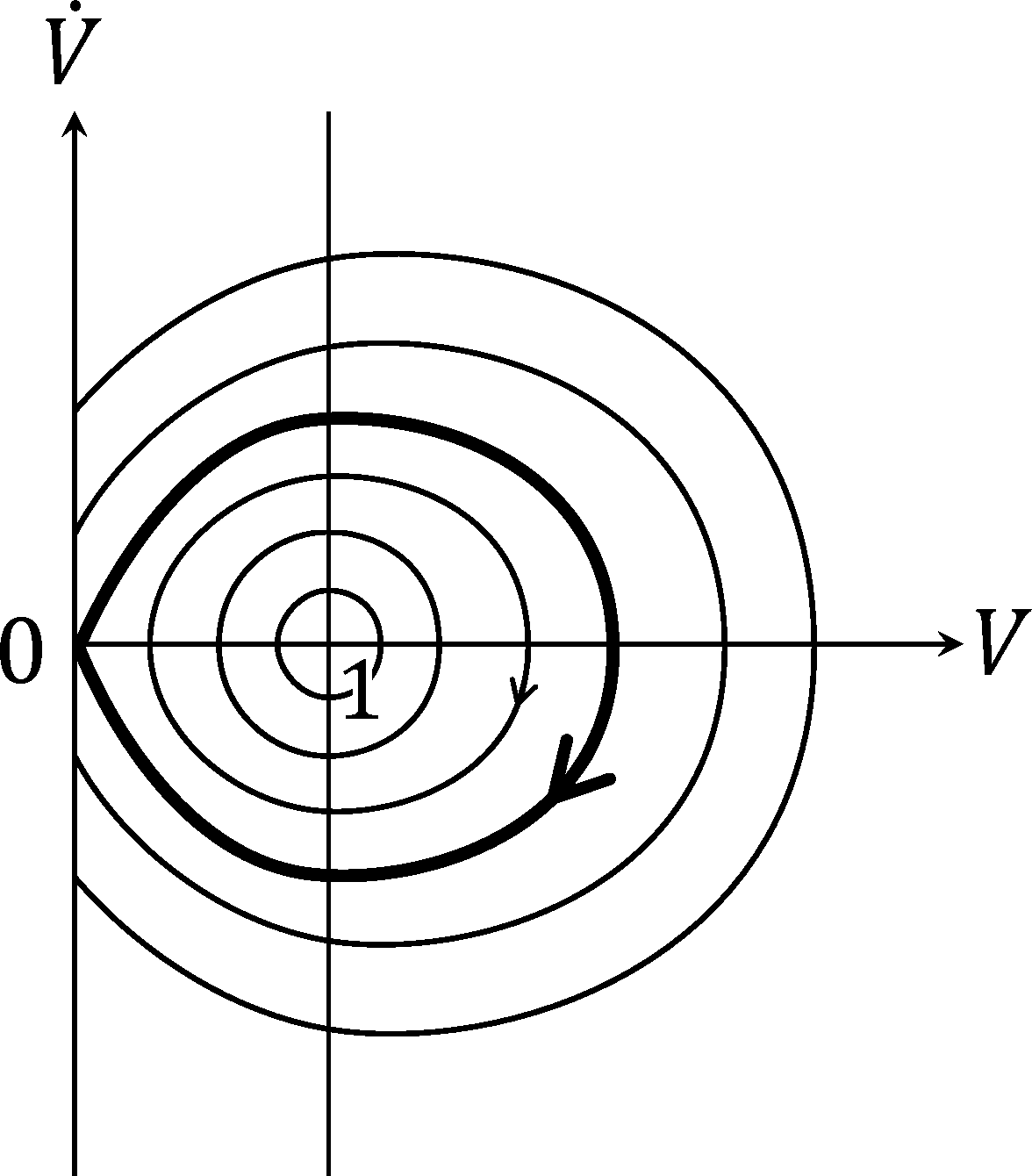}
 \caption{$a=1/2$} \label{F1}
 \end{minipage}
 \begin{minipage}[b]{0.32\linewidth}
 \centering
 \includegraphics[width=3.75cm]{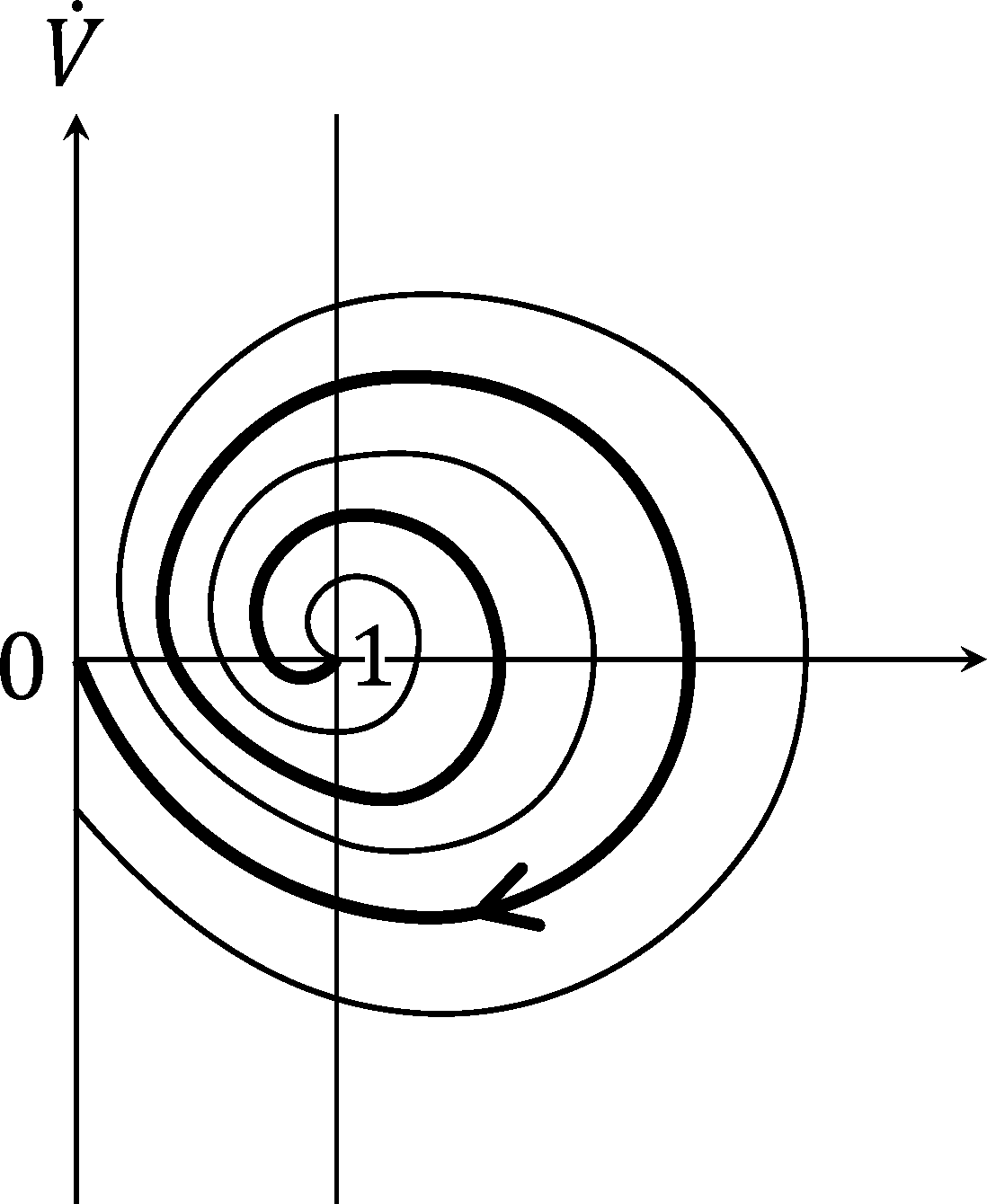}
 \caption{$a<1/2$} \label{F2}
 \end{minipage}
 \begin{minipage}[b]{0.32\linewidth}
 \centering
 \includegraphics[width=4cm]{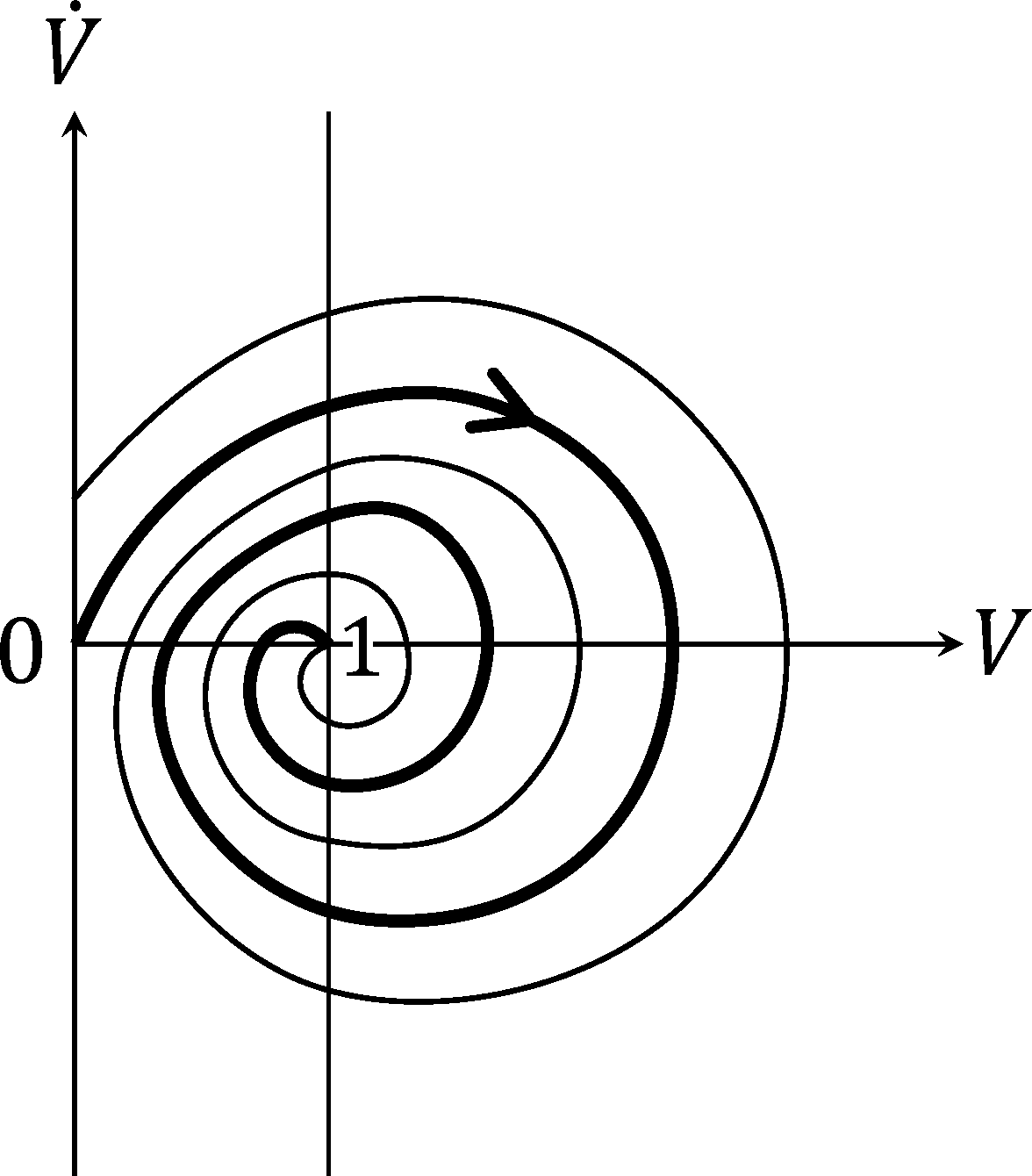}
 \caption{$a>1/2$} \label{F3}
 \end{minipage}
\end{figure}

\noindent\textbf{Case 2}. Suppose $(2a-1)^2 - 4a(1-a)(p-1) \geq 0$, namely $p \leq(4a(1-a) )^{-1}$. In this case, the phase portrait is as in Figures \ref{F4} and \ref{F5}. We notice that the solutions constructed in Theorem \ref{5T2} in the case $a>1/2$ corresponds to the bold curve in Figure \ref{F5}. In this particular case, there holds $u/u_a \leq 1$.
\noindent
\begin{figure}[H]
 \begin{minipage}[b]{0.49\linewidth}
 \centering
 \includegraphics[width=4.25cm]{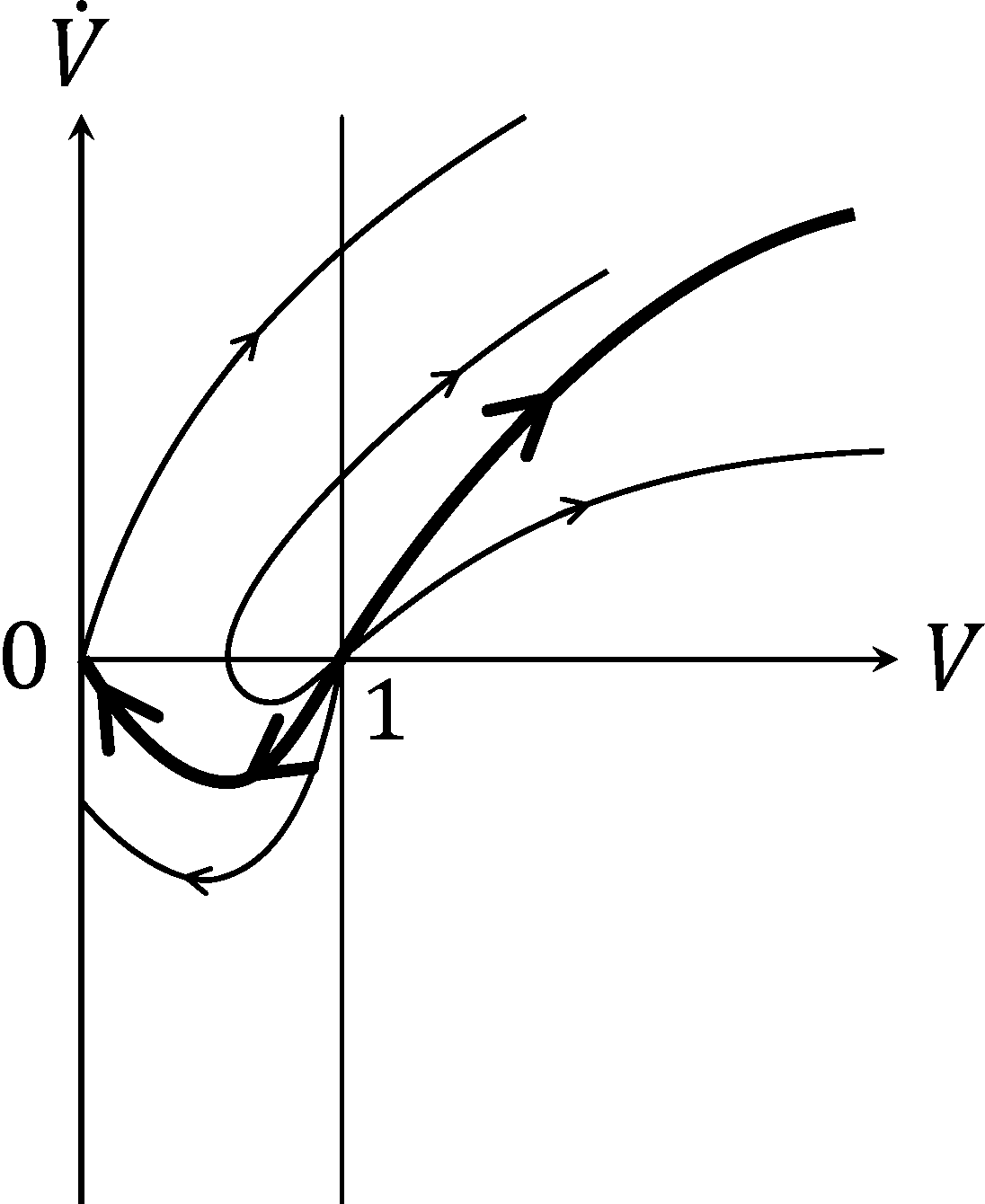}
 \caption{$a<1/2$} \label{F4}
 \end{minipage}
 \begin{minipage}[b]{0.49\linewidth}
 \centering
 \includegraphics[width=4.25cm]{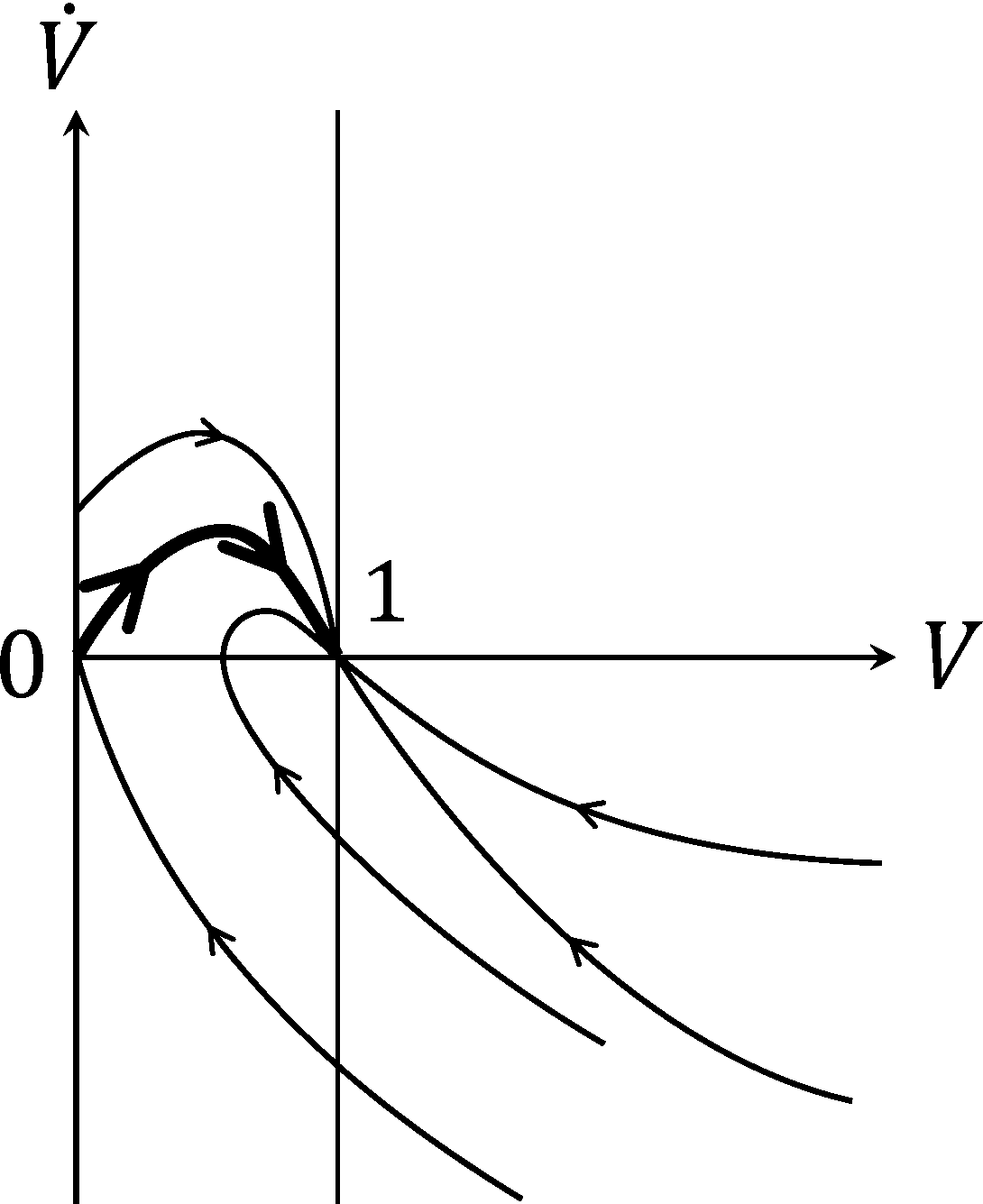}
 \caption{$a>1/2$} \label{F5}
 \end{minipage}
\end{figure}
It is worth noting that the bold curve on the right side of the vertical line $V=1$ in Figure \ref{F4} indicates that for $a<1/2$ there is a positive solution $u$ of \eqref{5E1} such that $u\geq u_a$. Again, keep in mind that the case $a=1/2$ is excluded since $p$ must be $p\leq1$. 


\section*{Acknowledgments}

The first author is grateful to Professor Eiji Yanagida for letting him know \cite{Pe} about the Lienard system. The work of the first author was partly supported by the Japan Society for the Promotion of Science through the grants KAKENHI No. 19H00639, No. 18H05323, No. 17H01091, and by Arithmer Inc. and Daikin Industries, Ltd. through collaborative grants. The work of the second author was partly supported by the Mathematical Society of Japan.

\section*{ORCID IDs}

\noindent Yoshikazu Giga: 0000-0003-3048-5240\\
\noindent Qu\cfac oc Anh Ng\^o: 0000-0002-3550-9689

\end{document}